\def\namedlabel#1#2{\begingroup
	\def\@currentlabel{#2}
	\label{#1}\endgroup
}
\renewcommand{\AA}{\mathbb{A}}
\newcommand{\CC}{\mathbb{C}}
\newcommand{\PP}{\mathbb{P}}
\newcommand{\RR}{\mathbb{R}}
\newcommand{\ZZ}{\mathbb{Z}}
\newcommand{\cF}{\mathcal{F}}
\newcommand{\cO}{\mathcal{O}}
\newcommand{\cP}{\mathcal{P}}
\newcommand{\cQ}{\mathcal{Q}}
\newcommand{\cS}{\mathcal{S}}
\newcommand{\GL}{\operatorname{GL}}
\newcommand{\PGL}{\operatorname{PGL}}
\newcommand{\Int}{\operatorname{int}}
\newcommand{\Aff}{\operatorname{Aff}}
\newcommand{\sgn}{\operatorname{sgn}}
\newtheorem{theorem}{Theorem}[section]
\newtheorem{corollary}[theorem]{Corollary}
\newtheorem{lemma}[theorem]{Lemma}
\newtheorem{proposition}[theorem]{Proposition}
\newtheorem{conjecture}[theorem]{Conjecture}
\theoremstyle{definition}
\newtheorem{definition}[theorem]{Definition}
\newtheorem{remark}[theorem]{Remark}
\newtheorem{problem}[theorem]{Problem}
\title{Spirals, tic-tac-toe partition, and deep diagonal daps}
\author{Zhengyu Zou}
\date{\today}
\begin{document}
\maketitle

\begin{abstract}
    The deep diagonal map $T_k$ acts on planar polygons by connecting the $k$-th diagonals and intersecting them successively. The map $T_2$ is the pentagram map, and $T_k$ is a generalization.
    We study the action of $T_k$ on two subsets of the so-called twisted polygons, which we term \textit{type-$\alpha$ and type-$\beta$ $k$-spirals}. For $k \geq 2$, $T_{k}$ preserves both types of $k$-spirals. In particular, we show that for $k = 2$ and $k = 3$, both types of $k$-spirals have precompact forward and backward $T_k$-orbits modulo projective transformations. We derive a rational formula for $T_3$, which generalizes the $y$-variables transformation formula of the corresponding quiver mutation by M. Glick and P. Pylyavskyy. We also present four algebraic invariants of $T_3$. These special orbits in the moduli space are partitioned into cells of a $3 \times 3$ tic-tac-toe grid. This establishes the action of $T_k$ on $k$-spirals as a geometric generalization of $T_2$ on convex polygons. 
\end{abstract}

\section{Introduction} \label{sec:intro}

\subsection{Context and Motivation}

Given a polygon $P$ in the real projective plane, let $T_k$ be the map that connects its $k$-th diagonals and intersects them successively to form another polygon $P'$ whose vertices are given by the following formula:
\begin{equation} \label{eqn:delta k,1 formula}
    P'_i = P_i P_{i+k} \cap P_{i+1} P_{i+k+1}.
\end{equation} 
Figure \ref{fig:delta demo} demonstrates an example of the action of $T_2$ on a convex heptagon.
The map $T_2$ is called the \textit{pentagram map}, a well-studied discrete dynamical system (see \cite{Schwartz1992, Schwartz2001recurrent, schwartz2008discretemonodromypentagramsmethod, Ovsienko2010}). A well-known result is that $T_2$ preserves convexity.\footnote{A projective polygon is \textit{convex} if some projective transformation maps it to a planar convex polygon in the affine patch} The $T_2$-orbit of a convex polygon sits on a flat torus in the moduli space of projective equivalent convex polygons. On the other hand, the geometry of the map $T_k$ is less well-behaved. For $k \geq 3$, the $T_k$ images of convex polygons may not even be embedded. See Figure \ref{fig:delta demo} for an example of $T_3$ taking a convex heptagon to a polygon that is not even embedded. 

\begin{figure}[ht]
    \centering
    \scriptsize
    
    \def\svgwidth{0.6\columnwidth}
\begingroup%
  \makeatletter%
  \providecommand\color[2][]{%
    \errmessage{(Inkscape) Color is used for the text in Inkscape, but the package 'color.sty' is not loaded}%
    \renewcommand\color[2][]{}%
  }%
  \providecommand\transparent[1]{%
    \errmessage{(Inkscape) Transparency is used (non-zero) for the text in Inkscape, but the package 'transparent.sty' is not loaded}%
    \renewcommand\transparent[1]{}%
  }%
  \providecommand\rotatebox[2]{#2}%
  \newcommand*\fsize{\dimexpr\f@size pt\relax}%
  \newcommand*\lineheight[1]{\fontsize{\fsize}{#1\fsize}\selectfont}%
  \ifx\svgwidth\undefined%
    \setlength{\unitlength}{241.97331466bp}%
    \ifx\svgscale\undefined%
      \relax%
    \else%
      \setlength{\unitlength}{\unitlength * \real{\svgscale}}%
    \fi%
  \else%
    \setlength{\unitlength}{\svgwidth}%
  \fi%
  \global\let\svgwidth\undefined%
  \global\let\svgscale\undefined%
  \makeatother%
  \begin{picture}(1,0.46348012)%
    \lineheight{1}%
    \setlength\tabcolsep{0pt}%
    \put(0,0){\includegraphics[width=\unitlength,page=1]{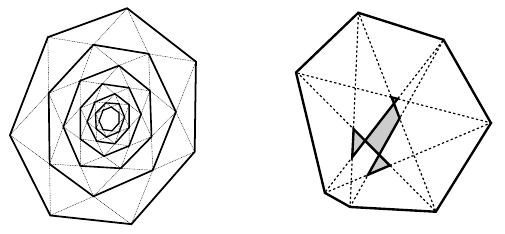}}%
    \put(0.60304045,0.37567518){\color[rgb]{0,0,0}\makebox(0,0)[lt]{\lineheight{1.25}\smash{\begin{tabular}[t]{l}$P$\end{tabular}}}}%
    \put(0.73087307,0.17587762){\color[rgb]{0,0,0}\makebox(0,0)[lt]{\lineheight{1.25}\smash{\begin{tabular}[t]{l}$P'$\end{tabular}}}}%
    \put(0.08761983,0.40879498){\color[rgb]{0,0,0}\makebox(0,0)[lt]{\lineheight{1.25}\smash{\begin{tabular}[t]{l}$P$\end{tabular}}}}%
    \put(0.11698607,0.34643624){\color[rgb]{0,0,0}\makebox(0,0)[lt]{\lineheight{1.25}\smash{\begin{tabular}[t]{l}$P'$\end{tabular}}}}%
    \put(0.17232685,0.32344348){\color[rgb]{0,0,0}\makebox(0,0)[lt]{\lineheight{1.25}\smash{\begin{tabular}[t]{l}$P''$\end{tabular}}}}%
  \end{picture}%
\endgroup%

    \caption{Left: The iterative images of a convex heptagon under the action of $T_2$. Right: A convex heptagon whose image under $T_3$ is not even embedded.}
    \label{fig:delta demo}
\end{figure}

Previous results of $T_k$ often had an algebraic and combinatorial flavor, motivated by two branches of studies. The first one was a sequence of works \cite{schwartz2008discretemonodromypentagramsmethod, Ovsienko2010, Soloviev_2013, Ovsienko2013Integrability} that established that the $T_2$ action on the moduli space of projective convex polygons is a discrete completely integrable system; the second one was M.\ Glick's discovery in \cite{glick2011Ypatterns} of the connection between $T_2$ and cluster algebras. 
In \cite{gekhtman2012higherpentagrammapsweighted}, M.\ Gekhtman, M.\ Shapiro, S.\ Tabachnikov, and A.\ Vainshtein generalized the cluster transformations in \cite{glick2011Ypatterns} to the map $T_k$ acting on so-called ``corrugated polygons,'' which are polygonal curves in $\RR\PP^{k}$ satisfying certain coplanarity conditions. \cite{gekhtman2012higherpentagrammapsweighted} showed that $T_k$ is a discrete integrable system. There are numerous integrability results for these higher-dimensional analogs. See \cite{Khesin2013, Beffa2013AGDflows, Beffa2015IntegrableGeneralizations, Khesin2016, izosimov2023longdiagonalpentagrammaps}. 
These led to many applications and connections of $T_k$ to other fields, such as 
octahedral recurrence 
    \cite{schwartz2008discretemonodromypentagramsmethod, difrancesco2012tsystemsboundariesnetworksolutions}, 
the condensation method of computing determinants 
    \cite{schwartz2008discretemonodromypentagramsmethod, Glick2020limit}, 
cluster algebras 
    \cite{glick2011Ypatterns, gekhtman2012higherpentagrammapsweighted,GP2016ymeshes, difrancesco2012tsystemsboundariesnetworksolutions}, 
Poisson Lie groups 
    \cite{Fock2016, Izosimov2022poisson}, 
$T$-systems 
    \cite{KEDEM2015233, difrancesco2012tsystemsboundariesnetworksolutions}, 
Grassmannians 
    \cite{Felipe2019Grassmannians}, 
algebraically closed fields 
    \cite{Weinreich2023}, 
Poncelet polygons 
    \cite{Schwartz2007Poncelet, schwartz2021textbookcasepentagramrigidity, Izosimov2022poncelet, schwartz2024pentagramrigiditycentrallysymmetric}, 
and integrable partial differential equations
    \cite{schwartz2008discretemonodromypentagramsmethod, Ovsienko2010, Nackan2021kdv}. 

The geometric aspects of $T_k$ and other deep diagonal maps on planar polygons remain underexplored. There are only a few studies on the geometries of $T_k$ that focused on small $k$ or polygons with many symmetries. See \cite{schwartz2021textbookcasepentagramrigidity, schwartz2024pentagramrigiditycentrallysymmetric}. There is no established general framework on the type of geometric properties preserved under $T_k$ for $k \geq 3$ that is analogous to convexity under $T_2$. Even less is known for geometric objects that have precompact orbits under $T_k$.

The most relevant result to this endeavor is the discovery of \textit{$k$-birds} under the map $\Delta_{k}$ in \cite{schwartz2024flappingbirdspentagramzoo}. A $k$-bird $P$ is a planar $n$-gon with $n > 3k$, such that there exists a continuous path of polygons $P^{(t)}$ connecting $P$ to the regular $n$-gon where the four lines 
\begin{equation*}
    P^{(t)}_{i} P^{(t)}_{i-k-1}, \hspace{10pt}
    P^{(t)}_{i} P^{(t)}_{i-k}, \hspace{10pt}
    P^{(t)}_{i} P^{(t)}_{i+k}, \hspace{10pt}
    P^{(t)}_{i} P^{(t)}_{i+k+1}   
\end{equation*}
are distinct for all $i = 1, \ldots, n$ and $t \in I$. 
The map $\Delta_{k}$ connects the $(k+1)$-th diagonal of a polygon and intersects the diagonals that are $k$ clicks apart. See Figure \ref{fig:birds example} for the action of $\Delta_2$ on 2-birds. In \cite{schwartz2024flappingbirdspentagramzoo}, R.\ Schwartz showed that the $k$-birds are invariant under both $\Delta_{k}$ and $\Delta_{k}^{-1}$. Experimentally, the $k$-birds seem to have toroidal orbits under $\Delta_k$, which highly resembles the orbit of convex $n$-gons under $T_2$. Schwartz also showed that the $k$-birds have precompact forward $\Delta_k$-orbits modulo affine transformations---a property satisfied by convex $n$-gons under $T_2$. 
\begin{figure}[ht]
    \centering
    
    \def\svgwidth{0.5\columnwidth}
    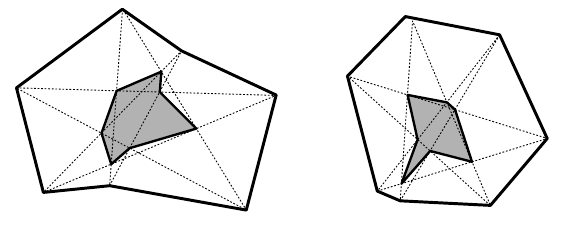

    \caption{Action of $\Delta_2$ on two heptagons that are 2-birds.}
    \label{fig:birds example}
\end{figure}

This paper has two main results. The first one is the discovery of two classes of geometric objects called type-$\alpha$ and type-$\beta$ $k$-spirals that are preserved under $T_k$ for all $k \geq 2$. These two classes of objects are subsets of \textit{twisted polygons}: bi-infinite sequences $P: \ZZ \rightarrow \RR\PP^2$ such that no three consecutive points are collinear, and $P_{i+n} = \phi(P_i)$ for some fixed projective transformation $\phi$ called the \textit{monodromy}. The moduli space of projective equivalent twisted $n$-gons is conventionally denoted by $\cP_n$. The type-$\alpha$ and type-$\beta$ $k$-spirals are the first discovered classes of geometric constructions of $T_k$ that generalize the pentagram map, which provides crucial evidence for a more general understanding of geometrically preserved classes under $T_k$. 

The second result is the precompactness of both forward and backward $T_k$-orbits of type-$\alpha$ and type-$\beta$ $k$-spirals modulo projective transformations for $k = 2$ and $3$, a key property satisfied by convex polygons under the pentagram map discovered by Schwartz in \cite{Schwartz1992}. 
We first examine the action of $T_3$ on type-$\alpha$ and type-$\beta$ 3-spirals. We show that one can characterize type-$\alpha$ and type-$\beta$ 3-spirals via linear constraints on the corner invariants. We also derive a birational formula of $T_3$ for the corner invariants, which is a generalization of the combinatorial formulas developed by \cite{GP2016ymeshes}. Then, we present four global invariants under $T_3$, which we use to prove the precompactness of $T_3$-orbits modulo projective transformations. 
For the case $k = 2$, we show that there exists no type-$\alpha$ 2-spirals and that the type-$\beta$ 2-spirals are distinct from closed convex polygons. We use the Casimir functions of the $T_2$-invariant Poisson structure developed in \cite{schwartz2008discretemonodromypentagramsmethod} and \cite{Ovsienko2010} to show that type-$\beta$ 2-spirals have precompact $T_2$-orbits modulo projective transformations. 

\subsection{The \texorpdfstring{$k$}{k}-Spirals under the Map \texorpdfstring{$T_k$}{T(k)}} \label{subsec:spiral polygons}

Here we describe the geometric picture of a $k$-spiral. For the formal definition, see $\S$\ref{subsec:construct Sk}. Geometrically, $[P] \in \cP_n$ is a \textit{$k$-spiral} if for all $N \in \ZZ$, we can find a representative $P$ such that $\{P_{i}\}_{i \geq N}$ lies on the affine patch, and the triangles $(P_i, P_{i+1}, P_{i+2})$ and $(P_i, P_{i+1}, P_{i+k})$ have positive orientation for all $i \geq N$. We call $P$ an \textit{$N$-representative} of $[P]$.

\begin{figure}[ht]
    \centering
    
    \def\svgwidth{0.5\columnwidth}
    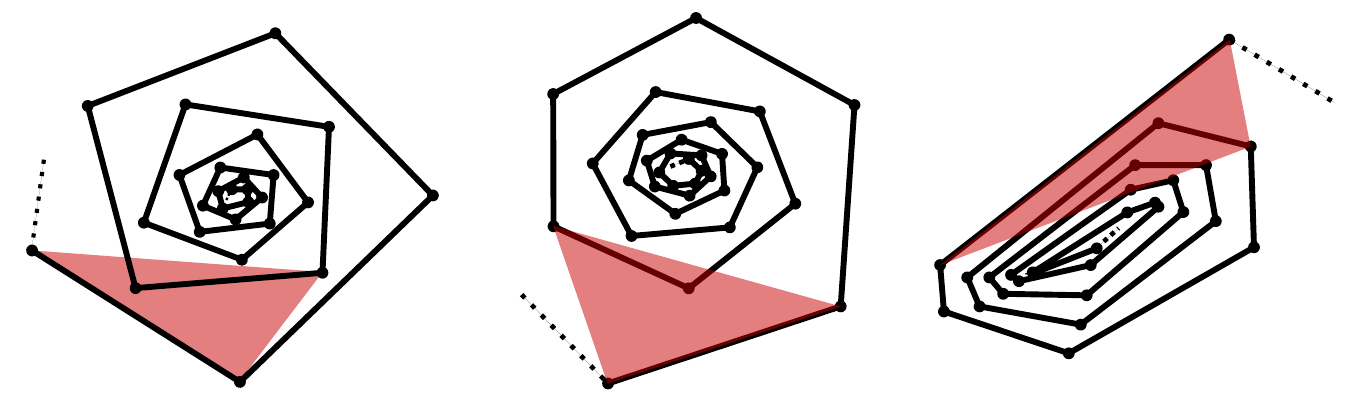

    \caption{A gallery of $5$-spirals. Left: $\cS_{5,3}^\alpha$. Middle: $\cS_{5,3}^\beta$. Right: $\cS_{5,20}^\beta$. The red-shaded triangles indicate the defining orientations and containment relations of type-$\alpha$ and type-$\beta$ $k$-spirals..}
    \label{fig:spiral example}
\end{figure}

We are mainly interested in two types of $k$-spirals, which we term \textit{type-$\alpha$} and \textit{type-$\beta$} (although there certainly exist many more types of spirals, we only consider these two types here). They are $k$-spirals with additional constraints on the arrangement of the four points $P_{i}$, $P_{i+1}$, $P_{i+k}$, $P_{i+k+1}$. For type-$\alpha$ spirals, we require $P_{i+k}$ to be contained in the interior of the triangle $(P_{i}, P_{i+1}, P_{i+k+1})$. For type-$\beta$ spirals, $P_{i+k+1}$ needs to be contained in the interior of $(P_{i}, P_{i+1}, P_{i+k})$. We say $P$ is a \textit{type-$\alpha$ or type-$\beta$ $N$-representative}. A class of twisted polygons $[P]$ is a type-$\alpha$ $k$-spiral (resp.\ $\beta$) if and only if it admits a type-$\alpha$ (resp.\ $\beta$) $N$-representative for all $N \in \ZZ$. 
Let $\cS_{k,n}^\alpha$ and $\cS_{k,n}^\beta$ denote the space of type-$\alpha$ and type-$\beta$ $k$-spirals modulo projective equivalence. We will see in $\S$\ref{subsec:construct Sk} that they are both open in $\cP_n$ and hence have dimension $2n$. Figure \ref{fig:spiral example} illustrates three examples of representatives of $\cS_{5,n}^\alpha$ for $n = 3$, and $20$. 

It turns out that $\cS_{k,n}^\alpha$ and $\cS_{k,n}^\beta$ are invariant under both $T_{k}$ and $T_k^{-1}$. Figure \ref{fig:T5 action on S5} shows the inward half of a representative $P$ of $[P] \in \cS_{5,3}^\beta$, with the red arc representing $P' = T_5(P)$. On the right we have five polygonal arcs by joining vertices of $P$ that are 5 clicks apart. We call them \textit{the transversals of $P$}. One way to distinguish type-$\alpha$ and type-$\beta$ spirals is by looking at the orientations of transversals. The transversals of type-$\alpha$ spirals are counterclockwise, whereas those of type-$\beta$ are clockwise (See Figure \ref{fig:spiral witness flags}). In $\S$\ref{sec:spiral}, we use the orientations of these transversals to prove the following main theorem. 

\begin{theorem} \label{thm:spiral polygon invariance}
    For all $n \geq 2$ and $k \geq 2$, we have $T_k(\cS_{k,n}^\alpha) = \cS_{k,n}^\alpha$. The same is true for type-$\beta$. 
    \begin{figure}[ht]
        \centering
        
    \def\svgwidth{0.5\columnwidth}
\begingroup%
  \makeatletter%
  \providecommand\color[2][]{%
    \errmessage{(Inkscape) Color is used for the text in Inkscape, but the package 'color.sty' is not loaded}%
    \renewcommand\color[2][]{}%
  }%
  \providecommand\transparent[1]{%
    \errmessage{(Inkscape) Transparency is used (non-zero) for the text in Inkscape, but the package 'transparent.sty' is not loaded}%
    \renewcommand\transparent[1]{}%
  }%
  \providecommand\rotatebox[2]{#2}%
  \newcommand*\fsize{\dimexpr\f@size pt\relax}%
  \newcommand*\lineheight[1]{\fontsize{\fsize}{#1\fsize}\selectfont}%
  \ifx\svgwidth\undefined%
    \setlength{\unitlength}{637.12355246bp}%
    \ifx\svgscale\undefined%
      \relax%
    \else%
      \setlength{\unitlength}{\unitlength * \real{\svgscale}}%
    \fi%
  \else%
    \setlength{\unitlength}{\svgwidth}%
  \fi%
  \global\let\svgwidth\undefined%
  \global\let\svgscale\undefined%
  \makeatother%
  \begin{picture}(1,0.52150044)%
    \lineheight{1}%
    \setlength\tabcolsep{0pt}%
    \put(0,0){\includegraphics[width=\unitlength,page=1]{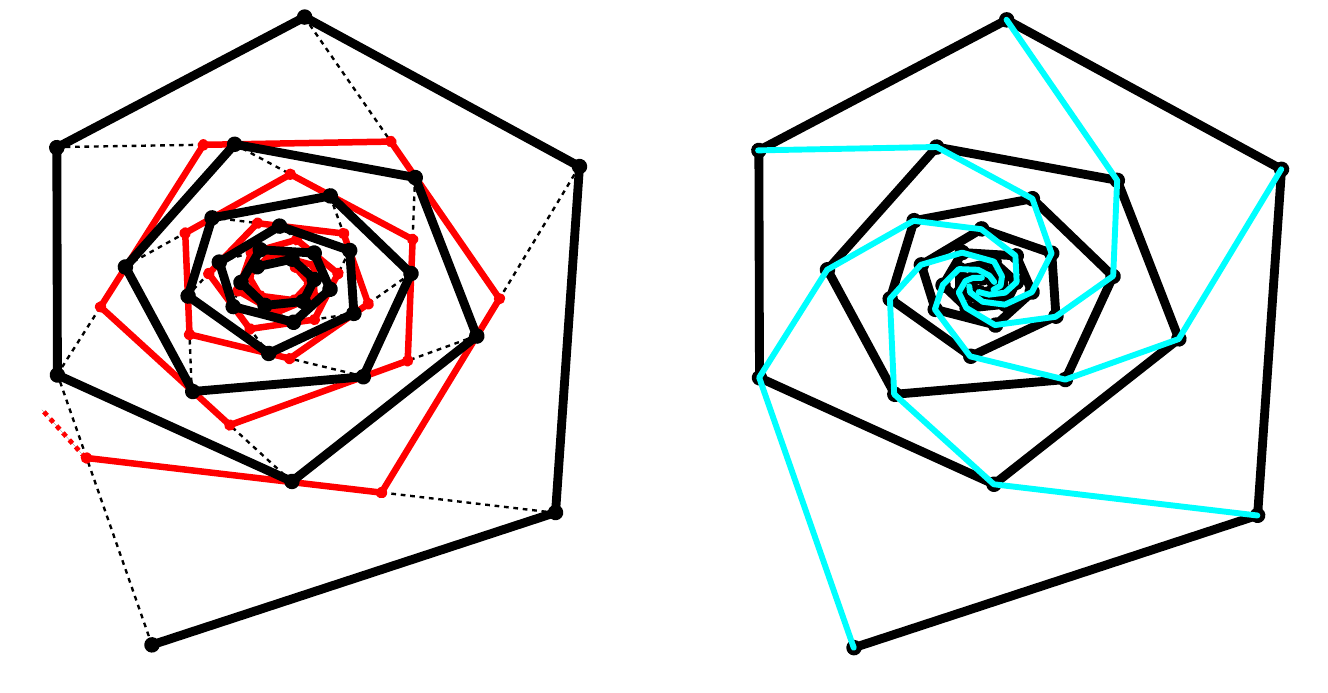}}%
    \put(0.04832379,0.00663476){\color[rgb]{0,0,0}\makebox(0,0)[lt]{\lineheight{1.25}\smash{\begin{tabular}[t]{l}$P$\end{tabular}}}}%
    \put(-0.01107419,0.15299737){\color[rgb]{1,0,0}\makebox(0,0)[lt]{\lineheight{1.25}\smash{\begin{tabular}[t]{l}$P'$\end{tabular}}}}%
    \put(0,0){\includegraphics[width=\unitlength,page=2]{spiral_action.pdf}}%
    \put(0.56582546,0.00555316){\color[rgb]{0,0,0}\makebox(0,0)[lt]{\lineheight{1.25}\smash{\begin{tabular}[t]{l}$P$\end{tabular}}}}%
  \end{picture}%
\endgroup%

        \caption{Left: $T_5$ acting on a representative $P$ of $[P] \in \cS_{5,3}^\beta$. Right: Transversals of $P$.}
        \label{fig:T5 action on S5}
    \end{figure}
\end{theorem}

A key property satisfied by convex polygons under the pentagram map is that the forward and backward orbits of any convex polygon under the pentagram map are precompact modulo projective tranformations. See \cite[Lemma 3.2]{Schwartz1992}. Experimental results suggest that the $k$-birds also have precompact $\Delta_k$-orbits. In \cite[Conjecture 8.2]{schwartz2024flappingbirdspentagramzoo} Schwartz conjectured that the $k$-birds have precompact forward $\Delta_k$-orbits modulo affine transformations. We observed experimentally that $\cS_{k,n}^\alpha$ and $\cS_{k,n}^\beta$ behave analogously under $T_k$. 

\begin{conjecture} \label{conj:spiral precompact}
    For $n \geq 2$ and $k \geq 2$, the forward and backward $T_k$-orbit of any $[P] \in \cS_{k,n}^{\alpha}$ is precompact in $\cP_n$. The same holds for type-$\beta$.
\end{conjecture}

In $\S$\ref{sec:precompact} and $\S$\ref{sec:two spiral}, we prove Conjecture \ref{conj:spiral precompact} for $k = 2$ and $k = 3$. 

\subsection{Tic-Tac-Toe Partition and Precompact \texorpdfstring{$T_3$}{T(3)} Orbits} \label{subsec:tic tac toe}

Our main focus will be the case $k = 3$, which we prove in $\S$\ref{subsec:precompactness of T3}.

\begin{theorem} \label{thm:3-spiral precompact}
    For $n \geq 2$, the forward and backward $T_3$-orbit of any $[P] \in \cS_{3,n}^{\alpha}$ is precompact in $\cP_n$. The same holds for type-$\beta$.
\end{theorem}

We discovered several interesting properties of the two types of $k$-spirals and the map $T_3$ along our way to prove Theorem \ref{thm:3-spiral precompact}. One major discovery is that the sets $\cS_{3,n}^\alpha$ and $\cS_{3,n}^\beta$ fit well with a local parameterization of $\cP_n \rightarrow \RR^{2n}$ introduced by \cite{Schwartz1992} called \textit{corner invariants} (See $\S$\ref{subsec:twisted polygons corner inv}). The invariant sets of $\cP_n$ under $T_3$ are partitioned by linear boundaries in the parameter space. The boundary lines give a grid pattern that resembles the board of the game ``tic-tac-toe.'' Each of the four ``side-squares'' is invariant under $T_3$. 
\begin{figure}[ht]
    \centering
    
    \def\svgwidth{0.4\columnwidth}
    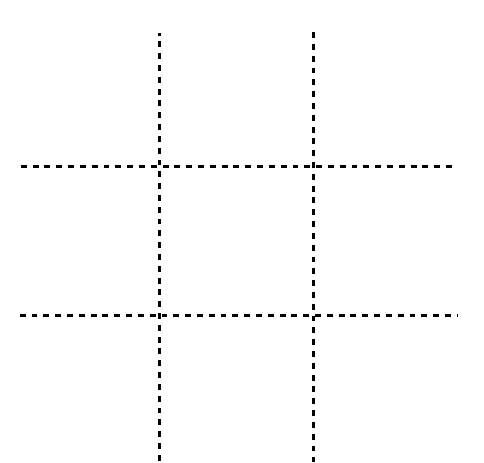

    \caption{The partition of $\RR^2$ into a $3 \times 3$ grid, and the four side-squares of our interest.}
    \label{fig:tic-tac-toe grid}
\end{figure}

To construct the tic-tac-toe board, consider the three intervals $I, J, K$ of $\RR$ given by $I = (-\infty, 0)$, $J = (0, 1)$, $K = (1, \infty)$. The squares are of the form $I \times I$, $I \times J$, $I \times K$, $J \times I$, etc..\ We mark the four side-squares $S_n(I,J)$, $S_n(J,I)$, $S_n(K,J)$, $S_n(J,K)$. See Figure \ref{fig:tic-tac-toe grid} for a visualization of the tic-tac-toe grid. Given $[P] \in \cP_n$, we say $[P] \in S_n(I, J)$ if all even corner invariants of $[P]$ are in $I$, and all odd ones are in $J$. This means if we plot all $n$ pairs of corner invariants $(x_{2i}, x_{2i+1})$ onto $\RR^2$, we would see $n$ points lying in $I \times J$. The other three side squares are defined analogously. 

\begin{figure}[ht]
    \centering
    \scriptsize
    
    \def\svgwidth{0.35\columnwidth}
\begingroup%
  \makeatletter%
  \providecommand\color[2][]{%
    \errmessage{(Inkscape) Color is used for the text in Inkscape, but the package 'color.sty' is not loaded}%
    \renewcommand\color[2][]{}%
  }%
  \providecommand\transparent[1]{%
    \errmessage{(Inkscape) Transparency is used (non-zero) for the text in Inkscape, but the package 'transparent.sty' is not loaded}%
    \renewcommand\transparent[1]{}%
  }%
  \providecommand\rotatebox[2]{#2}%
  \newcommand*\fsize{\dimexpr\f@size pt\relax}%
  \newcommand*\lineheight[1]{\fontsize{\fsize}{#1\fsize}\selectfont}%
  \ifx\svgwidth\undefined%
    \setlength{\unitlength}{197.71681658bp}%
    \ifx\svgscale\undefined%
      \relax%
    \else%
      \setlength{\unitlength}{\unitlength * \real{\svgscale}}%
    \fi%
  \else%
    \setlength{\unitlength}{\svgwidth}%
  \fi%
  \global\let\svgwidth\undefined%
  \global\let\svgscale\undefined%
  \makeatother%
  \begin{picture}(1,0.9081816)%
    \lineheight{1}%
    \setlength\tabcolsep{0pt}%
    \put(0,0){\includegraphics[width=\unitlength,page=1]{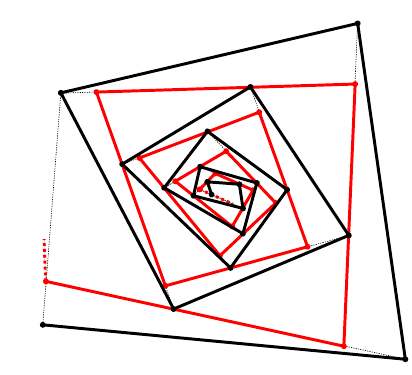}}%
    \put(0.03097556,0.11702933){\color[rgb]{0,0,0}\makebox(0,0)[lt]{\lineheight{1.25}\smash{\begin{tabular}[t]{l}$P$\end{tabular}}}}%
    \put(0.13346639,0.24479564){\color[rgb]{1,0,0}\makebox(0,0)[lt]{\lineheight{1.25}\smash{\begin{tabular}[t]{l}$P'$\end{tabular}}}}%
    \put(0,0){\includegraphics[width=\unitlength,page=2]{T3_action_on_S4.pdf}}%
  \end{picture}%
\endgroup%

    \ \ \ \ 
    \includegraphics[width=0.35\columnwidth]{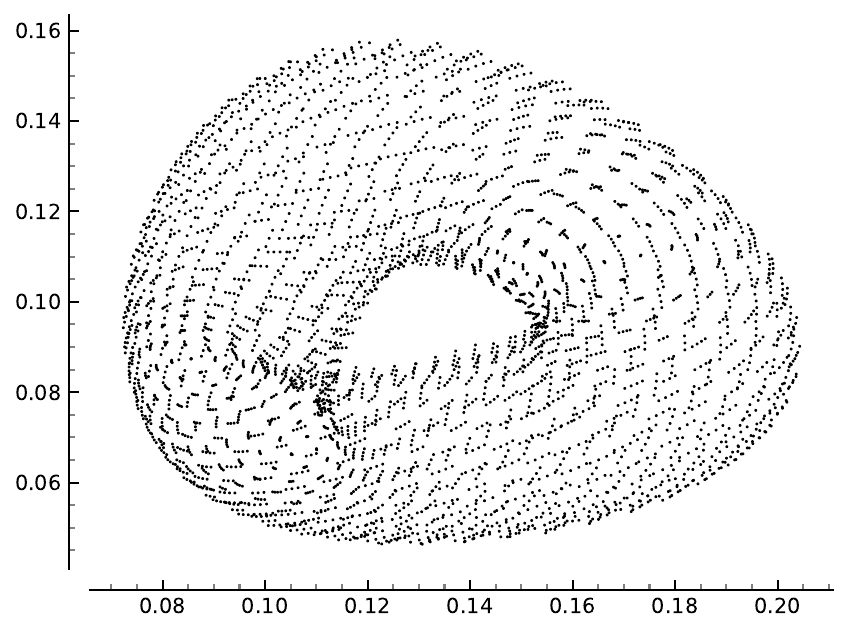}
    \caption{Left: $T_3$ acting on a representative of $[P] \in S_4(K,J)$. Right: The orbit of $P_3^{(m)}$ in $\AA^2$ by fixing $P_{-2} = (0,0)$, $P_{-1} = (1,0)$, $P_0 = (1,1)$, $P_1 = (0,1)$.}
    \label{fig:orbit demo}
\end{figure}

Figure \ref{fig:orbit demo} shows vertices of a representative $P$ of $[P] \in S_4(K,J)$ and the image $P' = T_3(P)$. On the right, we have the projection of the first $2^{11}$ iterations of the orbit of $P$ under $T_3$. Each point corresponds to $P^{(m)}_3$ after normalizing $(P_{-2}^{(m)}, P_{-1}^{(m)}, P_{0}^{(m)}, P_1^{(m)})$ to the unit square (here $P^{(m)} = T_3^m(P)$). We speculate that the orbit lies on a flat torus, where the map $T_3$ acts as a translation on the flat metric. 

Twisted polygons that are assigned to these squares have geometric properties. For example, the closed convex polygons always lie in the center square; two of the side-squares are $\cS_{3,n}^\alpha$ and $\cS_{3,n}^\beta$; the other two side-squares are obtained by reverting the indexing of vertices of these two types of $k$-spirals. These facts will be proved in $\S$\ref{sec:partition}. 

The proof of Theorem \ref{thm:3-spiral precompact} is algebraic. In $\S$\ref{sec:formula} I show that $T_3$ is a birational map on the corner invariants, which generalizes a direct application of \cite[Theorem 1.6]{GP2016ymeshes}. For the explicit formulas, see Equation \eqref{eqn:31 corner}. In $\S$\ref{sec:precompact}, I derive four algebraic invariants of $T_3$, which allow me to show boundedness of the corner invariants of the $T_3$-orbits, thereby proving Theorem \ref{thm:3-spiral precompact}. This approach is reminiscent of Schwartz's second proof of precompactness of $T_2$-orbits of convex polygons in \cite[Section 3B \& 3C]{Schwartz2001recurrent}.

\subsection{The \texorpdfstring{Type-$\beta$}{Type-Beta} 2-Spirals and Precompact \texorpdfstring{$T_2$}{T2} Orbits}

We now proceed to the case $k = 2$, where the map $T_2$ is the classical pentagram map. 
In $\S$\ref{subsec:construct Sk} we show that there exist no type-$\alpha$ 2-spirals (so Conjecture \ref{conj:spiral precompact} is vacuously true for type-$\alpha$ 2-spirals). On the other hand, type-$\beta$ 2-spirals are nontrivial geometric constructions that are distinct from convex polygons. In $\S$\ref{subsec:type beta 2 spirals corner inv}, we show that the corner invariants of type-$\beta$ 2-spirals are also partitioned by linear boundaries, and in particular $\cS_{3,n}^\alpha \subset \cS_{2,n}^\beta$. 

We point out that the type-$\beta$ 2-spirals are not related to the pentagram spirals in \cite{Schwartz2013spirals}. The latter requires $P$ to be a relabeling of $T_2^m(P)$ for some positive integer $m$.

In $\S$\ref{subsec:precompactness of T2 orbits}, we use the Casimir functions of the $T_2$-invariant Poisson structure on $\cP_n$ from \cite{schwartz2008discretemonodromypentagramsmethod} and \cite{Ovsienko2010} to prove Conjecture \ref{conj:spiral precompact} for $k = 2$. 

\begin{theorem} \label{thm:2-spiral precompact}
    For $n \geq 2$, the forward and backward $T_2$-orbit of any $[P] \in \cS_{2,n}^{\beta}$ is precompact in $\cP_n$.
\end{theorem}

\subsection{Obstacles for \texorpdfstring{$k > 3$}{k > 3} and Future Directions}

Our algebraic method of proving Theorem \ref{thm:3-spiral precompact} and \ref{thm:2-spiral precompact} requires a complete characterization of the corner invariants of $\cS_{k,n}^\alpha$ and $\cS_{k,n}^\beta$ and enough algebraic invariants of $T_k$ that uniformly bound the corner invariants away from the boundaries of $\cS_{k,n}^\alpha$ and $\cS_{k,n}^\beta$. However, the corner invariants seem to be not partitioned by linear boundaries for $k > 3$, which makes it difficult to analyze the boundaries of the corner invariants of $\cS_{k,n}^\alpha$ and $\cS_{k,n}^\beta$. Moreover, the map $T_k$ for the corner invariants seems not birational from computer algebra. This makes it difficult to algebraically characterize the corner invariants. 

One future direction is to look at the cross-ratio of different combinations of points other than the ones involved in the definition of corner invariants. In $\S$\ref{sec:appendix} we present a conjecture on a potential algebraic invariant of $T_k$, which can be interpreted as a Casimir function of a Poisson structure over the $y$-parameteris of a quiver $Q_S$. The quiver $Q_S$ is associated to a $Y$-mesh of type $S$ from \cite{GP2016ymeshes} and is isomorphic to the quiver in \cite{gekhtman2012higherpentagrammapsweighted}, which corresponds geometrically to the map $T_k$. 

Another direction is to analyze the two types of $k$-spirals geometrically. There are yet many open problems on the geometry of the two types of $k$-spirals that could hint at the behavior of their $T_k$-orbits. For open problems, see the end of $\S$\ref{subsec:construct Sk}. Answering these geometric problems may provide a new approach to tackle Conjecture \ref{conj:spiral precompact}. 

Finally, for the case $k = 3$, the birational formula for $T_3$ could be applied to other settings such as the action of $T_3$ on Poncelet polygons \cite{schwartz2024pentagramrigiditycentrallysymmetric} or discovering $T_3$-compatible Poisson structures on $\cP_n$ that generalizes the one in \cite{gekhtman2012higherpentagrammapsweighted} for corrugated polygons. 

\subsection{Accompanying Program}

I wrote a web-based program to visualize the orbits of twisted polygons under $T_k$. Readers can access it from the following link:
\begin{center}
    \href{https://zzou9.github.io/pentagram-map/spiral.html}{\textbf{https://zzou9.github.io/pentagram-map/spiral.html}}
\end{center}
When reaching the website, you will see a representative of a twisted polygon displayed in the middle of the screen. You can click on the user manual button for instructions on how to use the program. 
I discovered most of the results by computer experiments using this program. The paper contains rigorous proofs of the beautiful pictures I observed from it. 

\subsection{Acknowledgements}

This work was supported by a Brown University SPRINT/UTRA summer research program grant. I would like to thank my advisor, Richard Evan Schwartz, for introducing the concept of deep diagonal maps, providing extensive insights throughout the project, and offering guidance during the writing process. I would like to thank Anton Izosimov and Sergei Tabachnikov for their insightful discussions on the tic-tac-toe partition. Finally, I am grateful to the referees for their helpful comments and for highlighting the connections between the birational formula and the work of Glick and Pylyavskyy.

\section{Background} \label{sec:background}

\subsection{Projective Geometry}

The real projective plane $\RR\PP^2$ is the space of 1-dimensional subspaces of $\RR^3$. \textit{Points} of $\RR\PP^2$ are lines in $\RR^3$ that go through the origin. We say that $[x:y:z]$ is a \textit{homogeneous coordinate} of $V \in \RR\PP^2$ if the vector $\tilde V = (x, y, z)$ is a representative of $V$. 
Given two distinct points $V_1, V_2 \in \RR\PP^2$, the \textit{line} $l = V_1V_2$ connecting $V_1$ and $V_2$ is the 2-dimensional hyperplane spanned by the two 1-dimensional subspaces. 
Let $l_1,l_2$ be two lines in $\RR\PP^2$. The \textit{point of intersection} $l_1 \cap l_2$ is the 1-dimensional line given by the intersection of the two 2-dimensional subspaces. In $\RR\PP^2$, there exists a unique line connecting each pair of distinct points and a unique point of intersection given two distinct lines. We call a collection of points $V_1, V_2, \ldots, V_n \in \RR\PP^2$ \textit{in general position} if no three of them are collinear.

The \textit{affine patch} $\AA^2$ consists of points in $\RR\PP^2$ with homogeneous coordinate $[x:y:1]$. We call this canonical choice of coordinate $(x,y,1)$ the \textit{affine coordinate} of a point $V \in \AA^2$. There is a diffeomorphism $\Phi: \RR^2 \rightarrow \AA^2$ given by $\Phi(x,y) = [x:y:1]$. We often identify $\AA^2$ as a copy of $\RR^2$ in $\RR\PP^2$.
The line $\RR\PP^2 - \AA^2$ is called the \textit{line at infinity}.

A map $\phi: \RR\PP^2 \rightarrow \RR\PP^2$ is a \textit{projective transformation} if it maps points to points and lines to lines and is bijective. Algebraically, the group of projective transformations is $\PGL_3(\RR) = \GL_3(\RR) / \RR^* I$, where we are modding by the subgroup $\RR^* I = \{\lambda I: \lambda \in \RR^*\}$ and $I$ is the $3 \times 3$ identity matrix. We state a classical result regarding projective transformations below with its proof omitted.

\begin{theorem} \label{thm:proj transform}
    Given two 4-tuples of points $(V_1,V_2,V_3,V_4)$ and $(W_1,W_2,W_3,W_4)$ in $\RR\PP^2$, both in general position, there exists a unique $\phi \in \PGL_3(\RR)$ such that $\phi(V_i) = W_i$. 
\end{theorem}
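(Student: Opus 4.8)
The plan is to give the classical proof of the two-dimensional fundamental theorem of projective geometry: reduce an arbitrary general-position 4-tuple to a single standard reference frame, obtain existence by composition, and obtain uniqueness from a rigidity statement about that frame.

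First I would fix the reference frame $E_1 = [1:0:0]$, $E_2 = [0:1:0]$, $E_3 = [0:0:1]$, $E_4 = [1:1:1]$ and prove: for every general-position 4-tuple $(V_1,V_2,V_3,V_4)$ there is a $\psi \in \PGL_3(\RR)$ with $\psi(V_i) = E_i$. Choose lifts $\tilde V_1, \tilde V_2, \tilde V_3 \in \RR^3$. Using that three points of $\RR\PP^2$ are collinear iff their lifts are linearly dependent, general position makes $\tilde V_1,\tilde V_2,\tilde V_3$ a basis; writing $\tilde V_4 = a_1\tilde V_1 + a_2 \tilde V_2 + a_3 \tilde V_3$, the same criterion forces every $a_j \neq 0$ (otherwise $V_4$ would lie on the line through two of the other $V_i$). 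Rescaling the lift $\tilde V_j \mapsto a_j \tilde V_j$, I may assume $\tilde V_4 = \tilde V_1 + \tilde V_2 + \tilde V_3$; then the matrix $M$ with columns $\tilde V_1, \tilde V_2, \tilde V_3$ lies in $\GL_3(\RR)$, sends the standard basis to the $\tilde V_j$ and $(1,1,1)^{T}$ to $\tilde V_4$, so the class of $M^{-1}$ in $\PGL_3(\RR)$ is the desired $\psi$.

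Existence then follows immediately: applying the reduction to both 4-tuples gives $\psi_V, \psi_W$ with $\psi_V(V_i) = E_i = \psi_W(W_i)$, and $\phi := \psi_W^{-1}\circ\psi_V$ satisfies $\phi(V_i) = W_i$. For uniqueness, suppose $\phi_1,\phi_2$ both carry $(V_i)$ to $(W_i)$; then $h := \phi_2^{-1}\circ\phi_1$ fixes each $V_i$, and conjugating, $\psi_V \circ h \circ \psi_V^{-1}$ fixes each $E_i$. So it suffices to show that any $g \in \PGL_3(\RR)$ fixing $E_1,\ldots,E_4$ is the identity. Lift $g$ to $A \in \GL_3(\RR)$: fixing $[E_1],[E_2],[E_3]$ makes each standard basis vector an eigenvector of $A$, so $A = \mathrm{diag}(\lambda_1,\lambda_2,\lambda_3)$; fixing $[1:1:1]$ then forces $\lambda_1 = \lambda_2 = \lambda_3$, hence $A$ is scalar and $g$ is the identity in $\PGL_3(\RR)$. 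Tracing back, $h = \mathrm{id}$ and $\phi_1 = \phi_2$.

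I do not expect a genuine obstacle here, since this is a standard result. The only place demanding care is the bookkeeping in the reduction step: correctly invoking ``collinear $\Leftrightarrow$ linearly dependent lifts'' to get both the independence of $\tilde V_1,\tilde V_2,\tilde V_3$ and the non-vanishing of all three coefficients $a_j$, and keeping track of the scalar ambiguity when passing between $\GL_3(\RR)$ and $\PGL_3(\RR)$.
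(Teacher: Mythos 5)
Your proof is correct and complete: the reduction to the standard frame $E_1,\dots,E_4$, the use of ``collinear $\Leftrightarrow$ linearly dependent lifts'' to get both the basis property and $a_j\neq 0$, and the diagonal-matrix rigidity argument for uniqueness are all sound. There is nothing in the paper to compare it against, however --- the paper explicitly states Theorem \ref{thm:proj transform} as a classical result \emph{without} proof, so you have simply supplied the standard proof of the two-dimensional fundamental theorem of projective geometry that the author chose to omit.
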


The group of \textit{affine transformations} $\Aff_2(\RR)$ on $\AA^2$ is the subgroup of projective transformations that fixes the line at infinity. It is isomorphic to a semidirect product of $\GL_2(\RR)$ and $\RR^2$. Elements of $\Aff_2(\RR)$ can be uniquely expressed as a tuple $(M',v)$ where $M' \in \GL_2(\RR)$ and $v \in \RR^2$. Let $\Aff_2^+(\RR)$ denote the subgroup of $\Aff_2^+(\RR)$ where $(M',v) \in \Aff_2^+(\RR)$ iff $\det(M') > 0$. These are orientation-preserving affine transformations. 

\subsection{Orientation of Affine Triangles} \label{subsec:triangle orientation}

Given an ordered 3-tuple $(V_1,V_2,V_3)$ of points in $\RR^2$ or $\AA^2$, let $\Int(V_1, V_2, V_3)$ denote the interior of the affine triangle with vertices $V_1, V_2, V_3$. There is a canonical way to define the orientation of an ordered 3-tuple. Let $\tilde V_i$ be the affine coordinate of $V_i$. We consider the \textit{signed area} $\cO(V_1,V_2,V_3)$ of the oriented triangle, which can be computed as
\begin{equation} \label{eqn:triangle orientation}
    \cO(V_1,V_2,V_3) = \det(\tilde V_1, \tilde V_2, \tilde V_3).
\end{equation}
The determinant is evaluated on the $3 \times 3$ matrix with column vectors $\tilde V_i$. We say an ordered 3-tuple $(V_1, V_2, V_3)$ is \textit{positive} if $\cO(V_1,V_2,V_3)>0$.  Figure \ref{fig:triangle orientation} shows an example of a positive 3-tuple.

\begin{figure}[ht]
    \centering
    \footnotesize
    
    \def\svgwidth{0.8\columnwidth}
\begingroup%
  \makeatletter%
  \providecommand\color[2][]{%
    \errmessage{(Inkscape) Color is used for the text in Inkscape, but the package 'color.sty' is not loaded}%
    \renewcommand\color[2][]{}%
  }%
  \providecommand\transparent[1]{%
    \errmessage{(Inkscape) Transparency is used (non-zero) for the text in Inkscape, but the package 'transparent.sty' is not loaded}%
    \renewcommand\transparent[1]{}%
  }%
  \providecommand\rotatebox[2]{#2}%
  \newcommand*\fsize{\dimexpr\f@size pt\relax}%
  \newcommand*\lineheight[1]{\fontsize{\fsize}{#1\fsize}\selectfont}%
  \ifx\svgwidth\undefined%
    \setlength{\unitlength}{583.68518739bp}%
    \ifx\svgscale\undefined%
      \relax%
    \else%
      \setlength{\unitlength}{\unitlength * \real{\svgscale}}%
    \fi%
  \else%
    \setlength{\unitlength}{\svgwidth}%
  \fi%
  \global\let\svgwidth\undefined%
  \global\let\svgscale\undefined%
  \makeatother%
  \begin{picture}(1,0.3281553)%
    \lineheight{1}%
    \setlength\tabcolsep{0pt}%
    \put(0,0){\includegraphics[width=\unitlength,page=1]{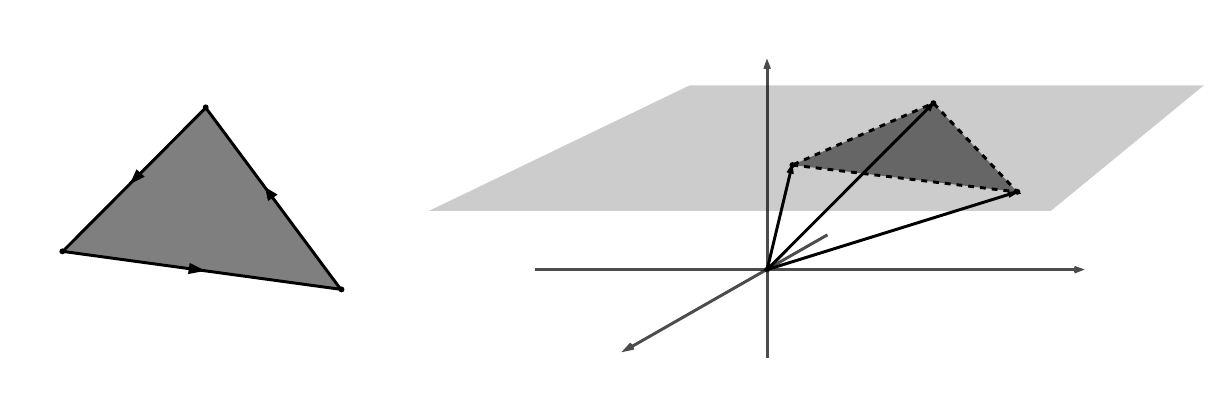}}%
    \put(0.0076785,0.10176179){\color[rgb]{0,0,0}\makebox(0,0)[lt]{\lineheight{1.25}\smash{\begin{tabular}[t]{l}$V_1$\end{tabular}}}}%
    \put(0.2873087,0.06255523){\color[rgb]{0,0,0}\makebox(0,0)[lt]{\lineheight{1.25}\smash{\begin{tabular}[t]{l}$V_2$\end{tabular}}}}%
    \put(0.15948835,0.25339845){\color[rgb]{0,0,0}\makebox(0,0)[lt]{\lineheight{1.25}\smash{\begin{tabular}[t]{l}$V_3$\end{tabular}}}}%
    \put(0.63459857,0.21336707){\color[rgb]{0,0,0}\makebox(0,0)[lt]{\lineheight{1.25}\smash{\begin{tabular}[t]{l}$\tilde V_1$\end{tabular}}}}%
    \put(0.8507718,0.16078231){\color[rgb]{0,0,0}\makebox(0,0)[lt]{\lineheight{1.25}\smash{\begin{tabular}[t]{l}$\tilde V_2$\end{tabular}}}}%
    \put(0.75809646,0.25736482){\color[rgb]{0,0,0}\makebox(0,0)[lt]{\lineheight{1.25}\smash{\begin{tabular}[t]{l}$\tilde V_3$\end{tabular}}}}%
    \put(0.44903037,0.21749611){\color[rgb]{0,0,0}\makebox(0,0)[lt]{\lineheight{1.25}\smash{\begin{tabular}[t]{l}$\AA^2$\end{tabular}}}}%
  \end{picture}%
\endgroup%

    \caption{A positive 3-tuple of affine points $(V_1, V_2, V_3)$.}
    \label{fig:triangle orientation}
\end{figure}

Here is another way to compute $\cO$ using the $\RR^2$ coordinates of $V_1, V_2, V_3$:
\begin{equation} \label{eqn:vector orientations}
    \begin{aligned}
        \cO(V_1,V_2,V_3) 
        &= \det(V_1,V_2) + \det(V_2,V_3) + \det(V_3,V_1) \\
        &= \det(V_i - V_{i-1}, V_{i+1} - V_i) \ \text{ for $i = 1, 2, 3$}
    \end{aligned}
\end{equation}
where the determinant is evaluated on the $2\times2$ matrix.

$\cO$ interacts with the action of $\Aff_2^+(\RR)$ and the symmetric group $S_3$ on planar/affine triangles in the following way: Given $M \in \Aff_2^+(\RR)$, let $V'_i = M(V_i)$. One can show that $(V_1, V_2, V_3)$ is positive iff $(V_1', V_2', V_3')$ is positive. 
On the other hand, for all $\sigma \in S_3$, $\cO(V_{\sigma(1)}, V_{\sigma(2)}, V_{\sigma(3)})
= \sgn(\sigma) \cO(V_1, V_2, V_3)$, so $\cO(V_{\sigma(1)}, V_{\sigma(2)}, V_{\sigma(3)})
= \cO(V_1, V_2, V_3)$ when $\sigma$ is a 3-cycle.

Below are useful equivalence conditions for the positivity of $(V_1, V_2, V_3)$. The proof is elementary, so we will omit it.

\begin{proposition} \label{prop:orientation lemma}
    Given $V_1, V_2, V_3 \in \RR^2$ in general position, and $W \in \Int(V_1,V_2,V_3)$, the following are equivalent:
    \begin{enumerate}
        \item $(V_1, V_2, V_3)$ is positive.
        \item $(V_i, V_{i+1}, W)$ is positive for some $i \in \{1,2,3\}$.
        \item $(V_i, V_{i+1}, W)$ is positive for all $i \in \{1,2,3\}$.
        \item $\det(V_i - V_{i-1}, V_{i+1} - W) > 0$ for some $i \in \{1,2,3 \}$. 
        \item $\det(V_i - V_{i-1}, V_{i+1} - W) > 0$ for all $i \in \{1,2,3 \}$. 
    \end{enumerate}
\end{proposition}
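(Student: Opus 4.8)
The plan is to parametrize $W$ by its barycentric coordinates with respect to the triangle $V_1V_2V_3$ and reduce every quantity appearing in (1)--(5) to a \emph{positive} scalar multiple of $\cO(V_1,V_2,V_3)$; all five conditions then collapse to the single assertion $\cO(V_1,V_2,V_3)>0$.

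First I would use that, since $V_1,V_2,V_3$ are in general position (hence affinely independent) and $W\in\Int(V_1,V_2,V_3)$, there are unique reals $\lambda_1,\lambda_2,\lambda_3>0$ with $\lambda_1+\lambda_2+\lambda_3=1$ such that, in affine coordinates, $\tilde W=\lambda_1\tilde V_1+\lambda_2\tilde V_2+\lambda_3\tilde V_3$. Substituting this into \eqref{eqn:triangle orientation} and expanding by multilinearity of the $3\times3$ determinant (two of the three terms carry a repeated column and vanish), then using the cyclic invariance of $\cO$ recorded after \eqref{eqn:vector orientations}, yields
\[
\cO(V_i,V_{i+1},W)=\lambda_{i+2}\,\cO(V_1,V_2,V_3),\qquad i\in\{1,2,3\},
\]
with all indices read modulo $3$. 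Since each $\lambda_{i+2}>0$, the sign of $\cO(V_i,V_{i+1},W)$ equals that of $\cO(V_1,V_2,V_3)$ for every $i$, which gives the equivalence of (1), (2), and (3) in one stroke.

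Next I would handle (4) and (5). From \eqref{eqn:vector orientations} one gets $\cO(A,B,C)=\det(B-A,\,C-A)$ for any triple of points, so expanding the $2\times2$ determinant by bilinearity gives
\begin{align*}
\det(V_i-V_{i-1},\,V_{i+1}-W)
&=\det(V_i-V_{i-1},\,V_{i+1}-V_{i-1})-\det(V_i-V_{i-1},\,W-V_{i-1})\\
&=\cO(V_{i-1},V_i,V_{i+1})-\cO(V_{i-1},V_i,W).
\end{align*}
The first term equals $\cO(V_1,V_2,V_3)$ by cyclic invariance, and the second equals $\lambda_{i+1}\,\cO(V_1,V_2,V_3)$ by the displayed identity of the previous paragraph (with $V_{i-1},V_i$ playing the roles of $V_i,V_{i+1}$). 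Hence
\[
\det(V_i-V_{i-1},\,V_{i+1}-W)=(1-\lambda_{i+1})\,\cO(V_1,V_2,V_3),
\]
and because $0<\lambda_{i+1}<1$ the left side again has the sign of $\cO(V_1,V_2,V_3)$ for each $i$. This establishes the equivalence of (1), (4), and (5) and completes the argument.

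I do not expect a genuine obstacle here: the computation is linear-algebraic throughout. The only point that needs care is the index bookkeeping --- in particular, one must check that the permutation carrying $(1,2,3)$ to $(i-1,i,i+1)$ is always a power of a $3$-cycle, hence even, so that no stray sign appears and every quantity in (1)--(5) truly reduces to a positive multiple of $\cO(V_1,V_2,V_3)$, i.e.\ to $\sgn\cO(V_1,V_2,V_3)$.
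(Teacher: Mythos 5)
The paper gives no proof of this proposition (it is stated with ``The proof is elementary, so we will omit''), so there is no argument to compare yours against; your proposal is correct and complete. The barycentric expansion $\tilde W=\lambda_1\tilde V_1+\lambda_2\tilde V_2+\lambda_3\tilde V_3$ with $\lambda_j\in(0,1)$ does reduce each quantity in (2)--(5) to $\lambda_{i+2}\,\cO(V_1,V_2,V_3)$ or $(1-\lambda_{i+1})\,\cO(V_1,V_2,V_3)$ with a strictly positive coefficient, the identity $\cO(A,B,C)=\det(B-A,C-A)$ matches Equation \ref{eqn:vector orientations}, and the cyclic shifts are indeed even permutations, so no sign issues arise; this is surely the elementary argument the author intended.
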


\subsection{The Cross-Ratio}

The cross-ratio is used to construct a projective-invariant parametrization of the $k$-spirals. There are multiple ways to define the cross-ratio of four collinear points on the projective plane, each using its own permutation of the points. We follow the convention used in \cite{Schwartz1992}. Given four collinear points $A, B, C, D$ on a line $\omega \subset \RR\PP^2$, we choose a projective transformation $\psi$ that maps $\omega$ to the $x$-axis of $\AA^2$. Let $a, b, c, d$ be the $x$-coordinates of $\psi(A)$, $\psi(B)$, $\psi(C)$, $\psi(D)$. We define the \textit{cross-ratio} to be the following quantity:
\begin{equation} \label{eqn:chi definition}
    \chi(A, B, C, D) := \frac{(a - b)(c - d)}{(a - c)(b - d)} .
\end{equation}
If $A$ lies on the line at infinity, we let $\chi(A, B, C, D) = \frac{c - d}{b - d}$. One can check that given any $\phi \in \PGL_3(\RR)$, 
\begin{equation*}
    \chi(A, B, C, D) = \chi(\phi(A), \phi(B), \phi(C), \phi(D)) . 
\end{equation*}

We also define the cross-ratio for four projective lines. Let $l, m, n, k$ be four lines intersecting at a common point $O$. Normalize with a projective transformation so that $l,m,n,k \subset \AA^2$ with slopes $s_l, s_m, s_n, s_k$. We define 
\begin{equation} \label{eqn:chi definition lines}
    \chi(l, m, n, k) = \frac{(s_l - s_m) (s_n - s_k)}{(s_l - s_n) (s_m - s_k)}
\end{equation}
with $\chi(l,m,n,k) = \frac{s_n - s_k}{s_m - s_k}$ if $s_l = \infty$. 

If $\omega$ is a line that does not go through $O$ and intersects $l, m, n, k$ at $A, B, C, D$ respectively, we have 
\begin{equation} \label{eqn:chi prop}
    \chi(l, m, n, k) = \chi(A, B, C, D) .
\end{equation}
See Figure \ref{fig:cross-ratio} for the configuration. The proof is elementary, so we will omit it. 

\begin{figure}[ht]
    \centering
    \scriptsize
    
    \def\svgwidth{0.4\columnwidth}
    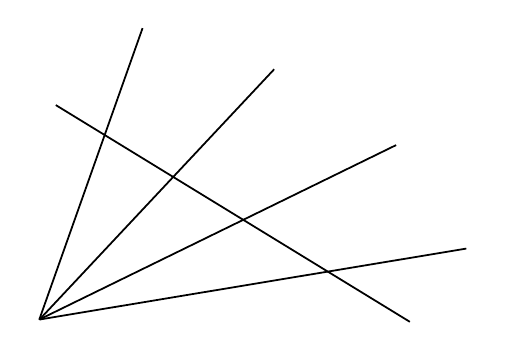

    \caption{The configuration in Equation \eqref{eqn:chi prop}.}
    \label{fig:cross-ratio}
\end{figure}

\subsection{Twisted Polygons, Corner Invariants} \label{subsec:twisted polygons corner inv}

Introduced in \cite{schwartz2008discretemonodromypentagramsmethod}, a \textit{twisted $n$-gon} is a bi-infinite sequence $P : \ZZ \rightarrow \RR\PP^2$, along with a projective transformation $M \in \PGL_3(\RR)$ called the \textit{monodromy}, such that every three consecutive points of $P$ are in general position, and $P_{i+n} = M(P_i)$ for all $i \in \ZZ$. When $M$ is the identity, we get an ordinary closed $n$-gon. Two twisted $n$-gons $P, Q$ are \textit{equivalent} if there exists $\phi \in \PGL_3(\RR)$ such that $\phi(P_i) = Q_i$ for all $i \in \ZZ$. The two monodromies $M_p$ and $M_q$ satisfy $M_q = \phi M_p \phi^{-1}$. Let $\cP_n$ denote the space of twisted $n$-gons modulo projective equivalence. 

The cross-ratio allows us to parameterize $\cP_n$ with coordinates in $\RR^{2n}$. 
Given a twisted $n$-gon $P$, the \textit{corner invariants} of $P$ is a coordinate system $x_{0}(P), \ldots, x_{2n-1}(P)$ given by 
\begin{equation} \label{eqn:corner invariant def}
    \left\{ 
        \begin{aligned}
            x_{2i}(P) = \chi(P_{i-2}, \, P_{i-1}, \, P_{i-2} P_{i-1} \cap P_{i} P_{i+1}, \, P_{i-2} P_{i-1} \cap P_{i+1} P_{i+2}) ;  \\
            x_{2i+1}(P) = \chi(P_{i+2}, \, P_{i+1}, \, P_{i+2} P_{i+1} \cap P_{i} P_{i-1}, \, P_{i+2} P_{i+1} \cap P_{i-1} P_{i-2}).
        \end{aligned}
    \right.
\end{equation}

\begin{figure}[ht]
    \centering
    \scriptsize
    
    \def\svgwidth{0.9\columnwidth}
    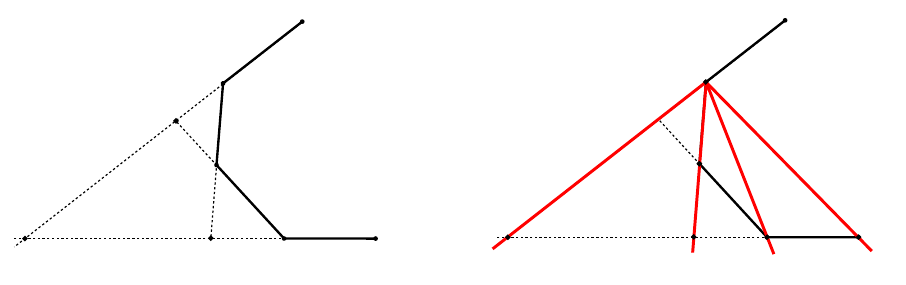

    \caption{Left: The corner invariants $x_{2i}(P) = \chi(P_{i-2}, P_{i-1}, A, O)$ computed using Equation \eqref{eqn:corner invariant def}. Right: $x_{2i}(P) = \chi(l_{1,-2}, l_{1,-1}, l_{1,0}, l_{1,2})$ computed using Equation \eqref{eqn:corner invariant lines def}.}
    \label{fig:corner inv}
\end{figure}

See the left side of Figure \ref{fig:corner inv} for a geometric interpretation of the corner invariants. Let $l_{a,b} = P_{i+a} P_{i+b}$. By Equation \eqref{eqn:chi definition lines}, the corner invariants can be computed by 
\begin{equation} \label{eqn:corner invariant lines def}
    \left\{ 
        \begin{aligned}
            x_{2i}(P) &= \chi(l_{1,-2}, l_{1,-1}, l_{1,0}, l_{1,2}) ;  \\
            x_{2i+1}(P) &= \chi(l_{-1,2}, l_{-1,1}, l_{-1,0}, l_{-1,-2}) .
        \end{aligned}
    \right.
\end{equation}
See the right side of Figure \ref{fig:corner inv} for the line configurations. 

Since $\chi$ is invariant under projective transformations, for all $j$ we have $x_j(P) = x_{j+2n}(P)$, so a $2n$-tuple of corner invariants is enough to fully determine the projective equivalence class of a twisted $n$-gon. We use $x_j(P)$ to denote the corner invariants of $[P] \in \cP_n$ without adding square brackets around $P$. 
To obtain the corner invariants of $[P] \in \cP_n$, one can simply choose an arbitrary representative $P$ and compute its corner invariants. \cite[Equation (19) \& (20)]{schwartz2008discretemonodromypentagramsmethod} showed that one can also revert the process and obtain a representative twisted polygon of the equivalence class given its corner invariants. 

\section{The Spirals and \texorpdfstring{$T_k$}{Tk}-Orbit Invariance} \label{sec:spiral}

In this section, we explore the geometric properties of type-$\alpha$ and type-$\beta$ $k$-spirals and prove Theorem \ref{thm:spiral polygon invariance}.
In $\S$\ref{subsec:construct Sk}, we give rigorous definitions of the two types of $k$-spirals and discuss their geometric properties. In $\S$\ref{subsec:transversals}, we introduce a construct associated to the two types of $k$-spirals called the transversals. In $\S$\ref{subsec:proof of forward invariance} and $\S$\ref{subsec:proof of backward invariance}, we prove Theorem \ref{thm:spiral polygon invariance} using geometric properties of the transversals.

\subsection{The Geometry of \texorpdfstring{$k$}{k}-Spirals} \label{subsec:construct Sk}

Here we give the formal definition of a $k$-spiral and its two subsets called type-$\alpha$ and type-$\beta$. We then explore their geometric properties and present some open problems. 

\begin{definition} \label{def:spiral polygon}
    Given integers $k \geq 2$, $n \geq 2$, we say that $[P] \in \cP_n$ is a \textit{$k$-spiral} if for all $N \in \ZZ$, there exists a representative $P$ that satisfies the following: For all $i \geq N$, $P_i$ lies in $\AA^2$, $(P_{i}, P_{i+1}, P_{i+2})$ is positive, and $(P_{i}, P_{i+1}, P_{i+k})$ is positive. Such a representative is called an \textit{$N$-representative}. Saying that $[P]$ is a $k$-spiral means that $[P]$ admits an $N$-representative for all $N \in \ZZ$. 
\end{definition}

\begin{remark} \label{rmk:why need N reps}
    The idea of considering an $N$-representative for each $N \in \ZZ$ is new to the literature and may at first seem superfluous. Readers will see in $\S$\ref{sec:partition} that this condition is natural when we examine the corner invariants of the two types of $k$-spirals. See the end of this section for open problems related to the geometry of $N$-representatives. 

    In practice, since $[P]$ is a twisted $n$-gon, it suffices to find a single $N_0$-representative $P_0$ for some $N_0 \in \ZZ$. One can then obtain other $N$-representatives for $N < N_0$ by applying the $m$-th power of the monodromy of $[P]$ to $P_0$, where $m > \frac{N_0 - N}{k} + 1$. 
\end{remark}

\begin{definition} \label{def:k spiral of type alpha and beta}
    A $k$-spiral $[P] \in \cP_n$ is of \textit{type-$\alpha$} or \textit{type-$\beta$} if for all $N \in \ZZ$, it has an $N$-representative $P$ that satisfies the following conditions:
    \begin{itemize}
        \item $[P]$ is of type-$\alpha$ if $P_{i+k} \in \Int(P_i, P_{i+1}, P_{i+k+1})$ for all $i \geq N$;
        \item $[P]$ is of type-$\beta$ if $P_{i+k+1} \in \Int(P_i, P_{i+1}, P_{i+k})$ for all $i \geq N$.
    \end{itemize}
\end{definition}

\begin{figure}[ht]
    \centering
    
    \def\svgwidth{0.6\columnwidth}
    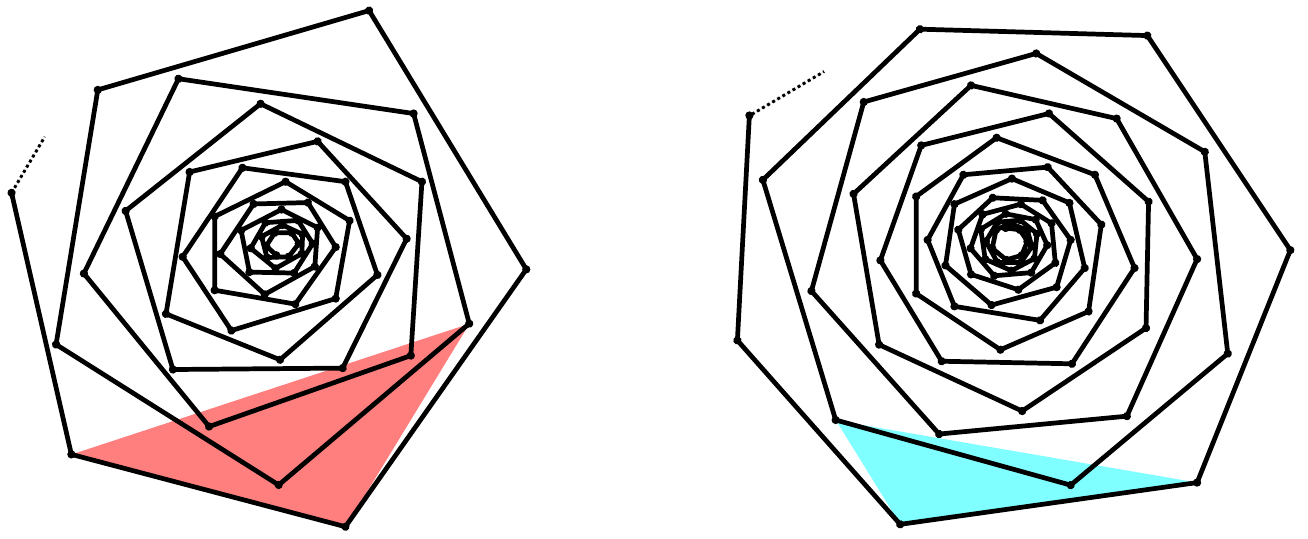

    \caption{Left: The inward half of a 0-representative $P$ of a type-$\alpha$ 6-spiral. The red triangle is joined by $(P_i, P_{i+1}, P_{i+k+1})$, which is positive by Proposition \ref{prop:type alpha k spiral orientation} and contains $P_{i+k}$ in its interior. 
    Right: The inward half of a 0-representative $P$ of a type-$\beta$ 6-spiral. The cyan triangle is joined by $(P_i, P_{i+1}, P_{i+k})$, which is positive and contains $P_{i+k+1}$ in its interior.}
    \label{fig:spiral witness}
\end{figure}

See Figure \ref{fig:spiral witness} for 0-representatives of type-$\alpha$ and type-$\beta$ 6-spirals.
For the type-$\alpha$ $k$-spirals, we show that positivity of $(P_i, P_{i+1}, P_{i+k})$ is equivalent to positivity of $(P_i, P_{i+1}, P_{i+k+1})$. The latter condition turns out to be more convenient for showing $T_k$ invariance. 

\begin{proposition} \label{prop:type alpha k spiral orientation}
    $[P] \in \cP_n$ is a type-$\alpha$ $k$-spiral if and only if for all $N \in \ZZ$, there exists a representative $P$ that satisfies the following: for all $i \geq N$, $P_i$ lies in $\AA^2$, $(P_{i}, P_{i+1}, P_{i+2})$ is positive, $(P_i, P_{i+1}, P_{i+k+1})$ is positive, and $P_{i+k} \in \Int(P_i, P_{i+1}, P_{i+k+1})$.
\end{proposition}

\begin{proof}
    Since $P_{i+k} \in \Int(P_i, P_{i+1}, P_{i+k+1})$, we see that $\Int(P_i, P_{i+1}, P_{i+k+1})$ is nonempty, so the three points $P_i$, $P_{i+1}$, $P_{i+k+1}$ are in general position. It then follows from Proposition \ref{prop:orientation lemma} that $(P_i, P_{i+1}, P_{i+k})$ is positive iff $(P_i, P_{i+1}, P_{i+k+1})$ is positive. 
\end{proof}

\begin{corollary} \label{cor:no type alpha 2 spirals}
    There exists no type-$\alpha$ 2-spirals. 
\end{corollary}

\begin{proof}
    It suffices to show that there exists no configuration of four points $A, B, C, D \in \AA^2$ such that $(A, B, D)$, $(B, C, D)$ are both positive and $C \in \Int(A, B, D)$. If $(A, B, D)$ is positive and $C \in \Int(A, B, D)$, then Proposition \ref{prop:orientation lemma} implies $(B, D, C)$ is positive, but that contradicts $(B, C, D)$ positive because $\cO(B, C, D) = -\cO(B, D, C)$. 
\end{proof}

On the other hand, type-$\beta$ 2-spirals do exist. Geometrically, their $N$-representatives look like triangular spirals. See $\S$\ref{sec:two spiral} for a more thorough discussion on type-$\beta$ 2-spirals.

\begin{remark}
    One may attempt to define the two types of $k$-spirals on bi-infinite sequences of points in $\RR\PP^2$ with no periodicity constraints. The results in this section hold true for this more general definition. We restrict our attention to twisted polygons because it's a finite-dimensional space, which allows us to more easily keep track of the $T_k$-orbits.
\end{remark}

We now proceed to discuss some geometric properties of type-$\alpha$ and type-$\beta$ $k$-spirals. 
A twisted polygon $P$ is called \textit{$k$-nice} if the four points $P_i, P_{i+1}, P_{i+k}, P_{i+k+1}$ are in general position for all $i \in \ZZ$. The $k$-nice condition is projective invariant. Let $\cP_{k,n}$ denote the space of $k$-nice twisted $n$-gons modulo projective equivalence. 

\begin{proposition} \label{prop:Pkn open in Pn}
    For all $k \geq 2$, $\cP_{k,n}$ is open in $\cP_n$, so it has dimension $2n$. 
\end{proposition}

\begin{proof}
    The condition that four points $P_i, P_{i+1}, P_{i+k}, P_{i+k+1}$ are in general position remains true if we perturb one of the points in a small enough neighborhood of $\RR\PP^2$. The dimension of $\cP_{k,n}$ comes from the fact that $\cP_n$ has dimension $2n$, which is shown in \cite[Lemma 2.2]{Ovsienko2010}.
\end{proof}

\begin{proposition} \label{prop:spirals are k-nice}
    Both type-$\alpha$ and type-$\beta$ $k$-spirals are $k$-nice.
\end{proposition}

\begin{proof}
    We give a proof to the type-$\alpha$ case. The type-$\beta$ case is analogous, so we will omit it. Given a type-$\alpha$ $k$-spiral $[P]$ and an integer $i \in \ZZ$, let $P$ be an $i$-representative of $[P]$. Since $(P_i, P_{i+1}, P_{i+k+1})$ is positive, these three points cannot be collinear. Also, since $P_{i+k} \in \Int(P_i, P_{i+1}, P_{i+k+1})$, $P_{i+k}$ does not lie in any of the lines joined by two of the three vertices $P_i, P_{i+1}, P_{i+k+1}$. This shows that $P_i, P_{i+1}, P_{i+k}, P_{i+k+1}$ are in general position. 
\end{proof}

As stated in $\S$\ref{subsec:spiral polygons}, we let $\cS_{k,n}^\alpha$ and $\cS_{k,n}^\beta$ denote the space of type-$\alpha$ and type-$\beta$ $k$-spirals (By Corollary \ref{cor:no type alpha 2 spirals}, $\cS_{2,n}^\alpha = \emptyset$ for all $n \geq 2$ ). 

\begin{proposition} \label{prop:spirals are open in Pkn}
    Both $\cS_{k,n}^\alpha$ and $\cS_{k,n}^\beta$ are open in $\cP_{k,n}$, so they both have dimension $2n$. 
\end{proposition}

\begin{proof}
    The positivity conditions of $(P_{i}, P_{i+1}, P_{i+2})$ and $(P_i, P_{i+1}, P_{i+k})$ are open conditions from continuity of the determinant function. The condition $P_{i+k} \in \Int(P_i, P_{i+1}, P_{i+k+1})$ for type-$\alpha$ (or $P_{i+k+1} \in \Int(P_i, P_{i+1}, P_{i+k})$ for type-$\beta$) is equivalent to the positivity of certain determinants by Proposition \ref{prop:orientation lemma}, so this is also an open condition. Finally, $\cS_{k,n}^\alpha \subset \cP_{k,n}$ and $\cS_{k,n}^\beta \subset \cP_{k,n}$ follows from Proposition \ref{prop:spirals are k-nice}.
\end{proof}

A twisted polygon $P$ is \textit{closed} if there exists some positive integer $n$ such that $P_{i+n} = P_i$, or $[P] \in \cP_n$ with identity monodromy. We show that neither type-$\alpha$ nor type-$\beta$ $k$-spirals are closed. 

\begin{proposition} \label{prop:k spirals are not closed}
    For all $k \geq 2$ and $n \geq 2$, if $[P] \in \cS_{k,n}^\alpha$, then $[P]$ is not closed. The same holds for $\cS_{k,n}^{\beta}$. 
\end{proposition}

\begin{proof}
    Given any closed $n$-gon $P$ on $\AA^2$, let $C$ be the convex hull of the vertices of $P$. Since $P$ has finitely many vertices, there exists a vertex $P_i$ such that $P_i \not\in \Int(C)$. Then, since $\Int(P_{i-k}, P_{i-k+1}, P_{i+1}) \subset \Int(C)$, we must have $P_i \not\in \Int(P_{i-k}, P_{i-k+1}, P_{i+1})$. It follows that $P$ is not an $N$-representative of type-$\alpha$  $k$-spiral for any $N$ or $k$. The proof for type-$\beta$ is similar, so we omit it. 
\end{proof}

The two types of $k$-spirals seem to possess rich geometric properties. We will present some open problems. In the discussion below, $[P]$ denotes a type-$\alpha$ or type-$\beta$ $k$-spiral. 

\begin{problem}
    For all $N \in \ZZ$, is it always possible to find $N$-representatives $P$ such that for all $j > i+1$, $(P_i, P_{i+1}, P_{j})$ is positive (in other words, $P_j$ always lies on the same side of the line $P_i P_{i+1}$)? 
\end{problem}

\begin{problem}
    Let $P$ be an arbitrary representative of $[P]$. Is there a minimal $N \in \ZZ$ such that $P$ is an $N$-representative on some affine patch of $\RR\PP^2$? Does there exist $P$ that is an $N$-representative for all $N \in \ZZ$?
\end{problem}

\begin{problem}
    Given an $N$-representative $P$, does $P_i$ converge to a point in $\AA^2$ as $i \rightarrow \infty$?
\end{problem}

\subsection{Transversals of the Spirals} \label{subsec:transversals}

In this section, we prove our remark in $\S$\ref{subsec:spiral polygons} that transversals for type-$\alpha$ spirals are oriented counterclockwise, whereas transversals for type-$\beta$ are oriented clockwise. Recall that the \textit{transversals} of an $N$-representative $P$ of a $k$-spiral are $k$ polygonal arcs joined by vertices $P_i, P_{i+k}, P_{i+2k}, \ldots$ for $i = N, \ldots, N+k-1$. See Figure \ref{fig:spiral witness flags} for one of the $k$ transversals of the two representatives from Figure \ref{fig:spiral witness}. 

\begin{figure}[ht]
    \centering
    
    \def\svgwidth{0.6\columnwidth}
    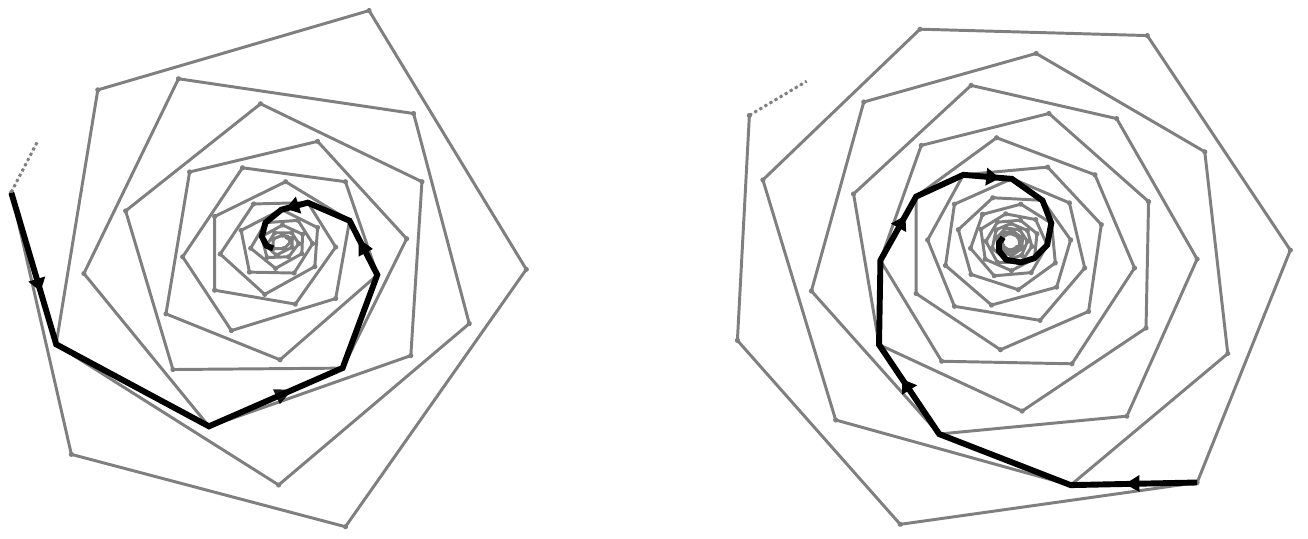

    \caption{Transversals of two representatives from Figure \ref{fig:spiral witness}.}
    \label{fig:spiral witness flags}
\end{figure}

\begin{lemma} \label{lem:orientation of pentagon}
    Given $O,A,B,C,D \in \AA^2$ \textnormal{(}See Figure \ref{fig:quadrilateral config}\textnormal{)} such that $(A,O,B)$, $(A,O,D)$, $(B,O,C)$, $(C,O,D)$ are all positive. Then, $(A,O,C)$ is positive iff $(B,O,D)$ is positive.
\end{lemma}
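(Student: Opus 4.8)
The plan is to reduce everything to statements about the "angular order" of the rays from $O$ through $A, B, C, D$, using the inverse cross ratio or, more elementarily, the sign of $2\times 2$ determinants of difference vectors based at $O$. After translating so that $O$ is the origin, write $a = A - O$, $b = B - O$, $c = C - O$, $d = D - O$ as vectors in $\RR^2$. By Equation \ref{eqn:vector orientations}, for any two of these points $\cO(X, O, Y) = \det(O - X, Y - O) = \det(-x, y) = -\det(x, y)$; hmm — let me instead just use the cleaner fact that $\cO(X,O,Y)$ has the same sign as $\det(x,y)$ up to a fixed sign, and track that sign carefully. The hypotheses say $\det(a,b)$, $\det(a,d)$, $\det(b,c)$, $\det(c,d)$ all have one fixed sign (say all positive after fixing orientation conventions), and we must show $\det(a,c)$ and $\det(b,d)$ have the same sign as each other.

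First I would pass to the projective line of directions: since none of $a,b,c,d$ is zero and $O$ together with any two of the four named points is in general position (so no two of the vectors are parallel), the four rays $\RR_{>0}a$, $\RR_{>0}b$, $\RR_{>0}c$, $\RR_{>0}d$ are four distinct points on the circle of directions $S^1$. The sign of $\det(x,y)$ records whether $y$ lies in the open half-plane to the "left" of $x$, equivalently whether going from direction $x$ to direction $y$ counterclockwise sweeps an angle in $(0,\pi)$. So the four positivity hypotheses pin down, for each of the four ordered pairs $(a,b),(a,d),(b,c),(c,d)$, that the second direction lies counterclockwise-within-$\pi$ of the first. The key combinatorial step is: these four constraints force a unique cyclic order of the four directions around $S^1$ — I expect it to be $a, b, c, d$ in counterclockwise order (one should double-check against the picture in Figure \ref{fig:spiral witness flags}, where $O$ is the common vertex and $A,B,C,D$ are consecutive transversal vertices). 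Once the cyclic order is $a,b,c,d$ counterclockwise, both "$c$ is counterclockwise-within-$\pi$ of $a$" and "$d$ is counterclockwise-within-$\pi$ of $b$" either both hold or both fail, and in fact they stand or fall together: $\det(a,c)>0 \iff \det(b,d)>0$.

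To make the "forces a unique cyclic order" step rigorous without case-bashing, I would argue as follows. Fix the counterclockwise cyclic order of the three directions $a, b, c$; by the hypotheses $\det(a,b)>0$ and $\det(b,c)>0$, the directions $a,b,c$ occur in counterclockwise order (i.e., $b$ lies in the counterclockwise arc from $a$ to $c$ that is... — here one uses that $\det(a,b)>0$ and $\det(b,c)>0$ together with $a,b,c$ distinct imply $b$ is in the ccw-arc from $a$ to $c$ of length $<\pi+\pi = 2\pi$, and a short argument pins it to the short side). Then $\det(a,d)>0$ places $d$ in the open ccw half-turn starting at $a$, and $\det(c,d)>0$ places $d$ in the open ccw half-turn ending at $c$ — wait, $\det(c,d)>0$ says $d$ is ccw-within-$\pi$ of $c$, i.e. $d$ is in the ccw arc from $c$ spanning $\pi$. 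Intersecting the "ccw-within-$\pi$ of $a$" arc with the "ccw-within-$\pi$ after $c$" arc, and using where $b$ sits, forces $d$ to lie in the arc strictly between $c$ and $a$ (going ccw), completing the cyclic order $a,b,c,d$. I expect \emph{this} arc-chasing to be the main obstacle: it is the one place where one must be careful that "positive determinant" means "within $\pi$ counterclockwise," not merely "counterclockwise," and that the four given arcs intersect in exactly the configuration claimed. Everything after that is immediate: with cyclic order $a,b,c,d$ counterclockwise, $\det(a,c)$ and $\det(b,d)$ are both positive (each of $c$, $d$ being a "half-turn or less" ccw from $a$, $b$ respectively follows from the established order), so in particular $(A,O,C)$ is positive iff $(B,O,D)$ is positive, which is the claim; and since the argument only used the common sign of the four determinants, it works symmetrically if that common sign is negative. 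A cleaner packaging, which I might adopt to avoid explicit trigonometry, is to lift to the universal cover $\RR \to S^1$, choose a real lift $\theta_a$ of $a$, and define lifts $\theta_b, \theta_c, \theta_d$ by forcing $\theta_b - \theta_a, \theta_c - \theta_b \in (0,\pi)$ and $\theta_d - \theta_c \in (0,\pi)$; then $\det(a,d)>0$ forces $\theta_d - \theta_a \in (0,\pi) \pmod{2\pi}$, but since $\theta_d - \theta_a = (\theta_d-\theta_c)+(\theta_c-\theta_b)+(\theta_b-\theta_a) \in (0, 3\pi)$, this pins $\theta_d - \theta_a \in (0,\pi)$, whence $\theta_c - \theta_a \in (0,\pi)$ as well (it is at most $\theta_d - \theta_a$ minus a positive quantity... actually $\theta_c - \theta_a = (\theta_c - \theta_b)+(\theta_b-\theta_a) \in (0,2\pi)$, and it is $< \theta_d - \theta_a < \pi$), giving $\det(a,c)>0$ and $\det(b,d)>0$ simultaneously and finishing the proof.
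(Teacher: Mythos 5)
Your reduction to signs of $2\times2$ determinants of the vectors $a,b,c,d$ based at $O$ is fine, and you correctly flagged the arc-chasing as the risky step --- but you then resolve it incorrectly, and your proof ends by asserting something strictly stronger than the lemma that is in fact false. In your universal-cover packaging you choose lifts with $\theta_b-\theta_a,\ \theta_c-\theta_b,\ \theta_d-\theta_c\in(0,\pi)$, so $\theta_d-\theta_a\in(0,3\pi)$, and then claim that $\det(a,d)>0$, i.e.\ $\theta_d-\theta_a\in(0,\pi)\pmod{2\pi}$, ``pins'' $\theta_d-\theta_a\in(0,\pi)$. It does not: intersecting $(0,3\pi)$ with $(0,\pi)+2\pi\ZZ$ gives $(0,\pi)\cup(2\pi,3\pi)$, and the second branch genuinely occurs. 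Take all three successive gaps equal to $0.9\pi$: every hypothesis determinant is positive, yet $\theta_c-\theta_a=\theta_d-\theta_b=1.8\pi\in(\pi,2\pi)$, so $\det(a,c)$ and $\det(b,d)$ are both \emph{negative}. (In the lemma's own orientation convention this is, e.g., $O=(0,0)$, $D=(1,0)$, and $C,B,A$ on the unit circle at angles $0.9\pi,\ 1.8\pi,\ 2.7\pi$; one checks numerically that $(A,O,B)$, $(A,O,D)$, $(B,O,C)$, $(C,O,D)$ are all positive while $(A,O,C)$ and $(B,O,D)$ are both negative.) So the hypotheses do \emph{not} force the cyclic order $a,b,c,d$ --- it can be $a,d,b,c$ --- and they do not force the two target triples to be positive, only to have the same sign; that is exactly why the lemma is an ``iff'' rather than an unconditional positivity statement. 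Your informal version (``these four constraints force a unique cyclic order'') and your formal version share the same flaw.

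The gap is repairable within your framework: in the branch $\theta_d-\theta_a\in(2\pi,3\pi)$ one has $\theta_c-\theta_a=(\theta_d-\theta_a)-(\theta_d-\theta_c)>\pi$ and $\theta_d-\theta_b=(\theta_d-\theta_a)-(\theta_b-\theta_a)>\pi$, with both still in $(\pi,2\pi)$, so both target determinants are negative and the equivalence holds there as well; combined with your first branch this yields the iff. For comparison, the paper avoids cyclic order entirely: it normalizes $O$ to the origin and $A$ (resp.\ $D$) onto an axis, assumes one side of the equivalence, and writes $\cO(B,O,D)$ (resp.\ $\cO(A,O,C)$) as a linear combination of $\cO(C,O,D)$ and $\cO(B,O,C)$ (resp.\ $\cO(A,O,B)$ and $\cO(B,O,C)$) whose coefficients are ratios of $y$-coordinates made positive by the hypotheses.
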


\begin{proof}
    For the forward direction, normalize with $\Aff_2^+(\RR)$ so that $O = (0,0)$ and $A = (-1,0)$. Let $B = (x_b, y_b)$, $C = (x_c, y_c)$, and $D = (x_d,y_d)$. Since $(A,O,B)$ is positive, Equation \eqref{eqn:vector orientations} gives us
    \begin{equation*}
        \cO(A,O,B) = \det(O - A,B - O) = \det(-A, B) = y_b > 0.
    \end{equation*}
    Similarly, positivity of $(A,O,C)$ and $(A,O,D)$ give us $y_c,y_d > 0$. Next, observe that
    \begin{equation*}
        \begin{aligned}
            \cO(B,O,C) = \det(-B, C) = -x_by_c + x_cy_b; \\
            \cO(B,O,D) = \det(-B, D) = -x_by_d + x_dy_b; \\
            \cO(C,O,D) = \det(-C, D) = -x_cy_d + x_dy_c.
        \end{aligned}
    \end{equation*}
    Since $y_b,y_c,y_d > 0$, we have $\frac{y_b}{y_c}, \frac{y_d}{y_c} > 0$, which implies 
    \begin{equation*}
        \cO(B, O, D) 
        = -x_by_d + x_d y_b 
        = \frac{y_b}{y_c} \, \cO(C,O,D) + \frac{y_d}{y_c} \, \cO(B,O,C)
        > 0.
    \end{equation*}
    This shows positivity of $(B,O,D)$.

    The proof for the backward direction is analogous. Normalize so that $O = (0,0)$ and $D = (1,0)$. Let $A = (x_a,y_a)$, $B = (x_b,y_b)$, $C = (x_c,y_c)$. Positivity of $(A,O,D)$, $(B, O, D)$, and $(C, O, D)$ implies $y_a,y_b,y_c > 0$. One can then check that 
    \begin{equation*}
        \cO(A, O, C) 
        = -x_ay_c + x_cy_a 
        = \frac{y_c}{y_b} \, \cO(A,O,B) + \frac{y_a}{y_b} \, \cO(B,O,C)
        > 0.
    \end{equation*}
    This shows positivity of $(A,O,C)$.
\end{proof}

\begin{figure}[ht]
    \centering
    \tiny
    
    \def\svgwidth{0.5\columnwidth}
    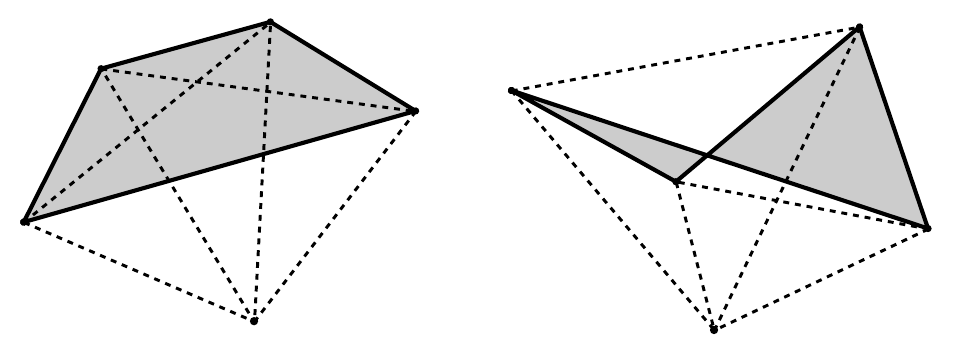

    \caption{Examples of $O, A, B, C, D$ in Lemma \ref{lem:orientation of pentagon}.}
    \label{fig:quadrilateral config}
\end{figure}

The next proposition formalizes our claim on the orientation of transversals. 

\begin{proposition} \label{prop:whirlpool}
    Let $P$ be an $N$-representative of a $k$-spiral $[P]$. For all $i > N$, if $[P]$ is type-$\alpha$, then $(P_i, P_{i+k}, P_{i+2k})$ is positive; if $[P]$ is type-$\beta$, then $(P_{i+2k}, P_{i+k}, P_{i})$ is positive.
\end{proposition}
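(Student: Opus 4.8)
\section*{Proof proposal}

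The plan is to reduce to the type-$\alpha$ case --- the type-$\beta$ statement $(P_{i+2k},P_{i+k},P_i)$ positive is the mirror image, proved by running the same argument with every orientation reversed --- and to use Lemma \ref{lem:orientation of pentagon} as the engine, since it is exactly the tool for fusing two short positive turns around a common vertex into one long positive turn. Fix a type-$\alpha$ $N$-representative $P$. The first move is to unpack Definition \ref{def:spiral polygon} via Proposition \ref{prop:orientation lemma}: applied to $P_{m+k}\in\Int(P_m,P_{m+1},P_{m+k+1})$, whose ambient triangle is positive, it yields for every $m\ge N$ that $(P_m,P_{m+1},P_{m+k})$, $(P_{m+1},P_{m+k+1},P_{m+k})$ and $(P_{m+k+1},P_m,P_{m+k})$ are all positive; adjoining $(P_m,P_{m+1},P_{m+2})$ and $(P_m,P_{m+1},P_{m+k+1})$ from the definition and using the $3$-cycle invariance of $\cO$, one obtains a stock of ``local'' positive triples --- in particular $(P_m,P_{m+k},P_{m+k+1})$ is positive, already a partial form of the desired transversal turn.

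For a fixed $i>N$ the main step is to apply Lemma \ref{lem:orientation of pentagon} with common vertex $O=P_i$, after rewriting the target $(P_i,P_{i+k},P_{i+2k})$ cyclically as $(P_{i+2k},P_i,P_{i+k})$; take $A=P_{i+2k}$, $C=P_{i+k}$, $B=P_{i+k+1}$, $D=P_{i+1}$. Using only the stock above (the $\alpha$-configuration at $m=i$) one checks that $(B,O,C)=(P_{i+k+1},P_i,P_{i+k})$, $(C,O,D)=(P_{i+k},P_i,P_{i+1})$ and $(B,O,D)=(P_{i+k+1},P_i,P_{i+1})$ are positive, so the lemma upgrades the positivity of $(B,O,D)$ to that of $(A,O,C)=(P_{i+2k},P_i,P_{i+k})$ --- which is the Proposition --- provided the two remaining hypotheses $(A,O,D)$ and $(A,O,B)$, namely
\[
(P_i,P_{i+1},P_{i+2k})\ \text{positive}\qquad\text{and}\qquad(P_i,P_{i+k+1},P_{i+2k})\ \text{positive},
\]
are in hand. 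Thus the Proposition is reduced to an auxiliary convexity claim: for a type-$\alpha$ $N$-representative, $(P_a,P_b,P_c)$ is positive whenever $N\le a<b<c$ and $c-a\le 2k$ (the window $P_a,\dots,P_{a+2k}$ is in convex position). I would prove this by induction on the window length $c-a$: the base cases $c-a\in\{2,k,k+1\}$ are exactly the triples from the first paragraph, and each inductive step is again one application of Lemma \ref{lem:orientation of pentagon}, with the common vertex chosen among $\{P_a,P_b,P_c\}$ and the two auxiliary points chosen inside the already-handled shorter window. The Proposition is then the instance $(a,b,c)=(i,i+k,i+2k)$, and $\beta$ follows by the orientation-reversed version.

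The hard part is making this windowed induction close without circularity: Lemma \ref{lem:orientation of pentagon} demands four positive triples simultaneously, so in the inductive step one must exhibit auxiliary points for which each of the four is a base case, an output of Proposition \ref{prop:orientation lemma} on the $\alpha$-configuration, or a \emph{strictly shorter} window triple --- and this has to be arranged uniformly in the position of $b$ between $a$ and $c$ and in the residue of $i$ modulo $k$ (this is also where one may need to stretch the window by an index or two, which is presumably why the statement is phrased for $i>N$ rather than $i\ge N$). A secondary subtlety is the boundary index $a=N$, where the natural auxiliary vertex $P_{a-1}$ is unavailable and one must re-choose the quintuple or invoke the $\alpha$-condition at $m=N$ directly. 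Once the bookkeeping of which quintuple to use at each step is settled, the leftover verifications are the routine sign computations via Equation \ref{eqn:vector orientations} together with the $S_3$-equivariance of $\cO$.
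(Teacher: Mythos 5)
Your reduction to Lemma \ref{lem:orientation of pentagon} is the right instinct, and your verification of $(B,O,C)$, $(C,O,D)$, $(B,O,D)$ for the quintuple $O=P_i$, $A=P_{i+2k}$, $B=P_{i+k+1}$, $C=P_{i+k}$, $D=P_{i+1}$ is fine. But the argument has a genuine gap: the auxiliary ``windowed convexity'' claim you propose to supply the two remaining hypotheses is false. For a type-$\alpha$ spiral, $P_{m+k}$ lies in the \emph{interior} of the positive triangle $(P_m,P_{m+1},P_{m+k+1})$, and Proposition \ref{prop:orientation lemma} then gives $(P_{m+1},P_{m+k+1},P_{m+k})$ positive, i.e.\ $(P_{m+1},P_{m+k},P_{m+k+1})$ \emph{negative}. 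This is an increasing-index triple with $c-a=k\le 2k$, so the window $P_a,\dots,P_{a+2k}$ is never in convex position and your induction fails already at what you list as a base case ($c-a=k$): the defining conditions produce only \emph{some} positive triples in each window, not all of them. The difficulty is created by your choice of common vertex: taking $O=P_i$, an endpoint of the transversal, forces the hypotheses $(A,O,D)=(P_i,P_{i+1},P_{i+2k})$ and $(A,O,B)=(P_i,P_{i+k+1},P_{i+2k})$, which span windows as long as the target itself, so nothing has been localized.

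The repair is a different quintuple. Take the common vertex to be the \emph{middle} transversal point $O=P_{i+k}$, with $A=P_i$, $C=P_{i+2k}$, and auxiliary points the spiral-neighbors $B=P_{i+k-1}$, $D=P_{i+k+1}$ of $O$. Then every hypothesis of Lemma \ref{lem:orientation of pentagon} is local: $(B,O,C)$ is the triangle of the $\alpha$-condition at index $i+k-1$; $(A,O,B)$, $(A,O,D)$, $(C,O,D)$ each come from one application of Proposition \ref{prop:orientation lemma} to the $\alpha$-configurations at indices $i-1$, $i$, $i+k$ respectively (the use of index $i-1$ is the real reason the statement reads $i>N$); and $(B,O,D)=(P_{i+k-1},P_{i+k},P_{i+k+1})$ is a consecutive triple, positive by definition. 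A single application of the backward direction of the lemma then yields $(A,O,C)=(P_i,P_{i+k},P_{i+2k})$ positive, with no induction and no long-range triples needed. The type-$\beta$ case is handled by the mirrored quintuple, as you anticipated.
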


\begin{figure}[ht]
    \centering
    \footnotesize
    
    \def\svgwidth{0.8\columnwidth}
    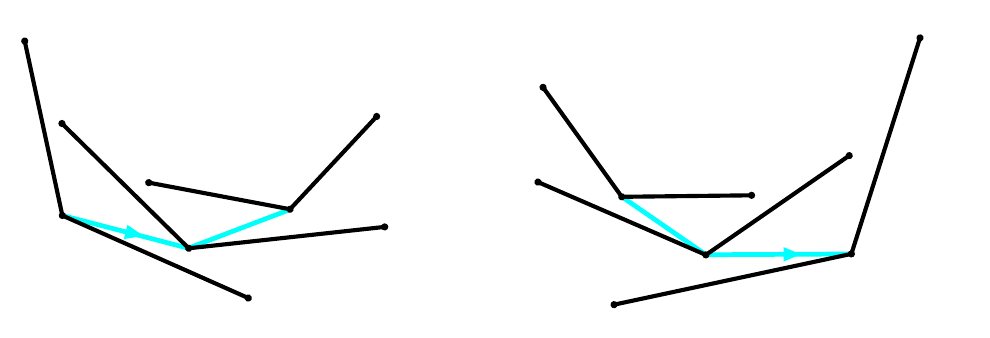

    \caption{Left: $\cS_{k,n}^\alpha$ configuration. Right: $\cS_{k,n}^\beta$ configuration.}
    \label{fig:spiral flags}
\end{figure}

\begin{proof}
    The proof applies Lemma \ref{lem:orientation of pentagon} with suitable choices of $O,A,B,C,D$. See Figure \ref{fig:spiral flags} for the configuration of points involved. 

    We start with $P$ of type-$\alpha$. 
    Consider the following choices of vertices:
    \begin{equation*} 
        O = P_{i+k}; \  
        A = P_{i}; \ 
        B = P_{i+k-1}; \ 
        C = P_{i+2k}; \ 
        D = P_{i+k+1}.
    \end{equation*}

    It follows immediately from the definition of a type-$\alpha$ $N$-representative that $(B,O,C)$ and $(B,O,D)$ are positive. The other conditions follow from applications of Proposition \ref{prop:orientation lemma}.
    Apply Proposition \ref{prop:orientation lemma} with $(P_{i-1}, P_{i}, P_{i+k})$ positive and $P_{i+k-1} \in \Int(P_{i-1}, P_{i}, P_{i+k})$ to get positivity of $(A, O, B)$.
    Apply Proposition \ref{prop:orientation lemma} with $(P_{i}, P_{i+1}, P_{i+k+1})$ positive and $P_{i+k} \in \Int(P_{i}, P_{i+1}, P_{i+k+1})$ to get positivity of $(A, O, D)$.
    Apply Proposition \ref{prop:orientation lemma} with $(P_{i+k}, P_{i+k+1}, P_{i+2k+1})$ positive and $P_{i+2k} \in \Int(P_{i+k}, P_{i+k+1}, P_{i+2k+1})$ to get positivity of $(C, O, D)$.
    Then, the backward direction of Lemma \ref{lem:orientation of pentagon} implies $(P_{i}, P_{i+k}, P_{i+2k})$ is positive.
    
    The proof for type-$\beta$ is analogous. 
    Consider the following choices of vertices:
    \begin{equation*}
        O = P_{i+k}; \  
        A = P_{i+k-1}; \ 
        B = P_{i+2k}; \ 
        C = P_{i+k+1}; \ 
        D = P_{i}.
    \end{equation*}

    Positivity of $(A,O,C)$ and $(B,O,C)$ follows from the definition of a type-$\beta$ $N$-representative. A similar application of Proposition \ref{prop:orientation lemma} as in the case of type-$\alpha$ gives positivity of $(A, O, B)$, $(A, O, D)$, and $(C, O, D)$, which we will omit.
    Finally, the forward direction of Lemma \ref{lem:orientation of pentagon} implies $(P_{i+2k}, P_{i+k}, P_{i})$ is positive.
\end{proof}

\subsection{Invariance of Forward Orbit} \label{subsec:proof of forward invariance}

In this section, we prove that $\cS_{k,n}^\alpha$ and $\cS_{k,n}^\beta$ are $T_k$-invariant. We will use Equation \eqref{eqn:delta k,1 formula} for our labeling convention. See Figure \ref{fig:twisted k nice}. 

\begin{figure}[ht]
    \centering
    \footnotesize
    
    \def\svgwidth{0.5\columnwidth}
\begingroup%
  \makeatletter%
  \providecommand\color[2][]{%
    \errmessage{(Inkscape) Color is used for the text in Inkscape, but the package 'color.sty' is not loaded}%
    \renewcommand\color[2][]{}%
  }%
  \providecommand\transparent[1]{%
    \errmessage{(Inkscape) Transparency is used (non-zero) for the text in Inkscape, but the package 'transparent.sty' is not loaded}%
    \renewcommand\transparent[1]{}%
  }%
  \providecommand\rotatebox[2]{#2}%
  \newcommand*\fsize{\dimexpr\f@size pt\relax}%
  \newcommand*\lineheight[1]{\fontsize{\fsize}{#1\fsize}\selectfont}%
  \ifx\svgwidth\undefined%
    \setlength{\unitlength}{362.03240582bp}%
    \ifx\svgscale\undefined%
      \relax%
    \else%
      \setlength{\unitlength}{\unitlength * \real{\svgscale}}%
    \fi%
  \else%
    \setlength{\unitlength}{\svgwidth}%
  \fi%
  \global\let\svgwidth\undefined%
  \global\let\svgscale\undefined%
  \makeatother%
  \begin{picture}(1,0.41886188)%
    \lineheight{1}%
    \setlength\tabcolsep{0pt}%
    \put(0,0){\includegraphics[width=\unitlength,page=1]{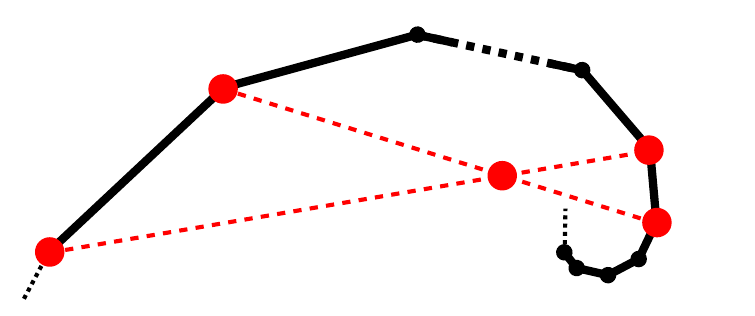}}%
    \put(0.22696423,0.33415693){\color[rgb]{0,0,0}\makebox(0,0)[lt]{\lineheight{1.25}\smash{\begin{tabular}[t]{l}$P_{i+1}$\end{tabular}}}}%
    \put(0.90224107,0.11642774){\color[rgb]{0,0,0}\makebox(0,0)[lt]{\lineheight{1.25}\smash{\begin{tabular}[t]{l}$P_{i+k+1}$\end{tabular}}}}%
    \put(0.89236916,0.21775483){\color[rgb]{0,0,0}\makebox(0,0)[lt]{\lineheight{1.25}\smash{\begin{tabular}[t]{l}$P_{i+k}$\end{tabular}}}}%
    \put(0.01518661,0.1230842){\color[rgb]{0,0,0}\makebox(0,0)[lt]{\lineheight{1.25}\smash{\begin{tabular}[t]{l}$P_{i}$\end{tabular}}}}%
    \put(0.63655446,0.2258932){\color[rgb]{1,0,0}\makebox(0,0)[lt]{\lineheight{1.25}\smash{\begin{tabular}[t]{l}$P'_{i}$\end{tabular}}}}%
  \end{picture}%
\endgroup%

    \caption{The labeling convention of the map $T_k$ from Equation \eqref{eqn:delta k,1 formula}.}
    \label{fig:twisted k nice}
\end{figure}

If $P$ is $k$-nice, then $P'$ is always well-defined. In particular, Proposition \ref{prop:spirals are k-nice} implies $T_k$ is well-defined on $\cS_{k,n}^\alpha$ and $\cS_{k,n}^\beta$. 

\begin{remark} \label{rmk:k-nice and Tk}
    $T_k$ doesn't necessarily send $k$-nice twisted polygons to $k$-nice twisted polygons. Here is an example provided by the anonymous referee: Fix $r \in (0,1)$. Consider the function $P: \ZZ \rightarrow \CC \cong \RR^2$ mapping $z \mapsto r^z \exp (z \pi i / k)$. One can check that $P$ is a $k$-nice twisted $n$-gon for any $n \geq 2$ with monodromy that is a scale-rotation, but $T_k(P)$ is the zero function and hence not $k$-nice. What we will show is that in the case of type-$\alpha$ and type-$\beta$ $k$-spirals, $T_k$ does preserve $k$-niceness. This is a direct consequence of Theorem \ref{thm:spiral polygon invariance} and Proposition \ref{prop:spirals are k-nice}. 
\end{remark}

We proceed to prove the $T_k$-invariance of $\cS_{k,n}^\alpha$ and $\cS_{k,n}^\beta$ separately. We start with the following lemma.

\begin{lemma} \label{lem:triangle lemma}
    Given four points $A, B, C, D$ in $\RR^2$ in general position with $D \in \Int(A,B,C)$. Let $O = AB \cap CD$. There exist $s \in (0,1)$ and $t \in (1,\infty)$ such that 
    \begin{equation*}
        O = (1-s)A + sB = (1-t)C + tD .
    \end{equation*}
\end{lemma}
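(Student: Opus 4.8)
The plan is to work directly with the affine coordinates of $A,B,C,D$ and extract the scalars $s,t$ from the collinearity condition defining $O$. First I would parametrize the two lines appearing in the statement: every point on line $AB$ can be written uniquely as $(1-s)A + sB$ for some $s \in \RR$, and every point on line $CD$ as $(1-t)C + tD$ for some $t \in \RR$; since $A,B,C,D$ are in general position the lines $AB$ and $CD$ are distinct and meet in the single point $O$, so the scalars $s$ and $t$ are well-defined real numbers. The entire content of the lemma is then the two inequalities $s \in (0,1)$ and $t \in (1,\infty)$, i.e.\ that $O$ lies strictly between $A$ and $B$ on the first line, and that on the line $CD$ the point $D$ lies strictly between $C$ and $O$.

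For the claim $s \in (0,1)$: since $D \in \Int(A,B,C)$, the ray from $C$ through $D$ exits the triangle through the open edge $AB$ (this is the standard fact that a ray from a vertex through an interior point crosses the opposite open side). That exit point is exactly $O = AB \cap CD$, so $O$ lies in the open segment $AB$, which is precisely $s \in (0,1)$. For the claim $t \in (1,\infty)$: on the line $CD$, the point $D$ is interior to the triangle, so $D$ lies strictly between $C$ and the point where the line leaves the triangle; that exit point, on the $C$-side continuation past $D$, is $O$. Hence the order along the line is $C$, then $D$, then $O$, which in the parametrization $(1-t)C + tD$ means $t > 1$ (with $t=0$ at $C$, $t=1$ at $D$, and $O$ beyond $D$). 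I would make this rigorous either by the betweenness characterization via barycentric coordinates — writing $D = \lambda A + \mu B + \nu C$ with $\lambda,\mu,\nu > 0$ and $\lambda+\mu+\nu = 1$, solving $O = AB \cap CD$ to get $O = \frac{\lambda}{\lambda+\mu}A + \frac{\mu}{\lambda+\mu}B$ so that $s = \frac{\mu}{\lambda+\mu} \in (0,1)$, and $O = \frac{1}{\lambda+\mu}\bigl(D - \nu C\bigr)$ rearranged to read $O = (1-t)C + tD$ with $t = \frac{1}{\lambda+\mu} > 1$ — or, more in the spirit of the paper, by invoking Proposition \ref{prop:orientation lemma} and the orientation function $\cO$ to pin down the signs.

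I would lean toward the barycentric-coordinate computation since it produces $s$ and $t$ explicitly and the inequalities $0 < \frac{\mu}{\lambda+\mu} < 1$ and $\frac{1}{\lambda+\mu} > 1$ are then immediate from $\lambda,\mu,\nu > 0$ and $\lambda+\mu+\nu = 1$ (the latter forcing $\lambda + \mu < 1$). The only genuine obstacle is a bookkeeping one: confirming that the intersection $O = AB \cap CD$ computed from the barycentric expression of $D$ actually has the claimed form, which amounts to checking that the point $\frac{\lambda}{\lambda+\mu}A + \frac{\mu}{\lambda+\mu}B$ is collinear with $C$ and $D$ — a one-line verification using $(\lambda+\mu)\cdot O + \nu C = D$. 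Everything else is routine, and no case analysis is needed because the general-position and interiority hypotheses guarantee all denominators are nonzero and all barycentric weights are strictly positive.
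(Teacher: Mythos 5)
Your barycentric-coordinate computation is exactly the paper's proof: the paper writes $D = \lambda_1 A + \lambda_2 B + \lambda_3 C$ with $\lambda_i \in (0,1)$ summing to $1$ and takes $s = \frac{\lambda_2}{1-\lambda_3}$, $t = \frac{1}{1-\lambda_3}$, which coincide with your $s = \frac{\mu}{\lambda+\mu}$ and $t = \frac{1}{\lambda+\mu}$. Your version is correct and only adds the (harmless) explicit verification that the candidate point is indeed the intersection $AB \cap CD$.
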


\begin{proof}
    Since $D \in \Int(A,B,C)$, there exists $\lambda_1, \lambda_2, \lambda_3 \in (0,1)$ such that 
    \begin{equation*}
        \lambda_1+\lambda_2+\lambda_3 = 1; \ \ 
        D = \lambda_1 A + \lambda_2 B + \lambda_3 C.
    \end{equation*}
    Taking $s = \frac{\lambda_2}{1 - \lambda_3}$ and $t = \frac{1}{1 - \lambda_3}$ gives us the desired result.
\end{proof}

\begin{proposition} \label{prop:Sk alpha invariance}
    For all $k \geq 2$ and $n \geq 2$, $T_k(\cS_{k,n}^\alpha) \subset \cS_{k,n}^\alpha$. 
\end{proposition}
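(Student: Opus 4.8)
The plan is to show that if $P$ is an $N$-representative of type $\alpha$, then $P' = T_k(P)$ is also a $k$-spiral of type $\alpha$, by producing an $N'$-representative of type $\alpha$ for $P'$ for every $N'$. Since $T_k$ commutes with the shift in index only up to the monodromy, and since the defining conditions are ``eventual'' (they must hold for all $i \ge N'$), it suffices to check the type-$\alpha$ conditions for $P'$ at indices $i$ far enough to the right, using a fixed $N$-representative $P$ of $[P]$ with $N$ chosen small enough. Concretely, I would fix $N$ and the corresponding $N$-representative $P$, set $P'_i = P_i P_{i+k} \cap P_{i+1} P_{i+k+1}$ as in \eqref{eqn:delta k,1 formula}, and verify: (i) $P'_i \in \AA^2$ for $i \ge N+1$ (or some explicit shift of $N$); (ii) $(P'_i, P'_{i+1}, P'_{i+2})$ is positive; (iii) $(P'_i, P'_{i+1}, P'_{i+k+1})$ is positive; and (iv) $P'_{i+k} \in \Int(P'_i, P'_{i+1}, P'_{i+k+1})$. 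The affine-patch condition (i) is the easy part: one uses that the relevant diagonal lines $P_iP_{i+k}$ and $P_{i+1}P_{i+k+1}$ meet away from the line at infinity because, by Proposition \ref{prop:whirlpool}, the transversal arc through $P_i, P_{i+k}, P_{i+2k}, \dots$ is locally convex with a fixed orientation, so consecutive edges of a transversal are genuinely transverse in the affine patch.

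For the orientation conditions (ii)--(iv), the key structural input is Lemma \ref{lem:triangle lemma} applied to the type-$\alpha$ configuration $A = P_i$, $B = P_{i+1}$, $C = P_{i+k+1}$, $D = P_{i+k}$ (so $D \in \Int(A,B,C)$): it expresses the auxiliary intersection point $O = AB \cap CD$ as an affine combination with coefficients $s \in (0,1)$ on the segment $AB$ and $t \in (1,\infty)$ on the ray from $C$ through $D$. The point $P'_i = P_iP_{i+k} \cap P_{i+1}P_{i+k+1}$ is a different intersection, so I would set up barycentric-type coordinates on each of the relevant triangles and express each $P'_i$ as an explicit affine combination of four consecutive $P$-vertices $P_i, P_{i+1}, P_{i+k}, P_{i+k+1}$, with coefficients whose signs are pinned down by the type-$\alpha$ inclusions and positivity. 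Once each $P'_i, P'_{i+1}, P'_{i+2}, \dots$ is written in terms of the $P$-vertices with controlled-sign coefficients, conditions (ii)--(iv) reduce to sign computations of $2\times 2$ determinants in the $P$-vertices, and these are exactly the kind of inequalities that Proposition \ref{prop:orientation lemma} and Lemma \ref{lem:orientation of pentagon} (together with Proposition \ref{prop:whirlpool}) are designed to feed. I would first nail down condition (iv) — the nesting $P'_{i+k} \in \Int(P'_i, P'_{i+1}, P'_{i+k+1})$ — since once the nesting holds, Proposition \ref{prop:orientation lemma} upgrades a single positivity into all of them, and (ii), (iii) then follow by the same bookkeeping applied at the shifted indices.

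The main obstacle I anticipate is (iv): showing that the image quadrilateral $(P'_i, P'_{i+1}, P'_{i+k}, P'_{i+k+1})$ again has the correct nesting pattern (the new $P'_{i+k}$ sitting strictly inside the triangle on the other three). Unlike the affine-patch and simple-orientation claims, this is a genuinely projective statement about how the ``connect $k$-th diagonals and intersect'' operation transforms a nested configuration, and it is the heart of why type $\alpha$ is preserved rather than collapsing or switching to type $\beta$. I expect to handle it by identifying $P'_{i+k}$ as lying on the two lines $P'_iP'_{i+1}$ and $P'_{i+k}P'_{i+k+1}$ — wait, more precisely, by using the reconstruction remark after \eqref{eqn:delta k,1 formula} that $P_{i+k+1} = P'_iP'_{i+1} \cap P'_{i+k}P'_{i+k+1}$, which relates the $P'$-configuration back to known $P$-data — and then transporting the strict inclusion through Lemma \ref{lem:triangle lemma}'s parametrization, checking that the combination coefficients for $P'_{i+k}$ in terms of $P'_i, P'_{i+1}, P'_{i+k+1}$ are all strictly positive and sum to one. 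The surjectivity half, $\cS_{k,n}^\alpha \subset T_k(\cS_{k,n}^\alpha)$, is presumably deferred to a separate proposition (using $T_k^{-1}$ and a symmetric argument), so for this proposition I only need the forward inclusion just sketched.
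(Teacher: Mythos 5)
Your proposal follows essentially the same route as the paper: apply Lemma \ref{lem:triangle lemma} to the type-$\alpha$ nesting to write each $P'_j$ as an affine combination of the $P$-vertices with coefficients of controlled sign (which also gives the affine-patch condition for free), verify the nesting $P'_{i+k} \in \Int(P'_i, P'_{i+1}, P'_{i+k+1})$ by exhibiting convex-combination coefficients, and reduce the remaining positivity claims to $2\times 2$ determinant signs fed by Propositions \ref{prop:orientation lemma} and \ref{prop:whirlpool}. The only adjustment needed is the role assignment in Lemma \ref{lem:triangle lemma} (take $A = P_{i+1}$, $B = P_{i+k+1}$, $C = P_i$, $D = P_{i+k}$ so that $O = AB \cap CD$ \emph{is} $P'_i$), after which your sketch matches the paper's proof step for step.
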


\begin{proof}
    Given an $N$-representative $P$ of some $[P] \in \cS_{k,n}^\alpha$, we will show that $P' = T_k(P)$ is a type-$\alpha$ $N$-representative of $[T_k(P)]$ by proving that for all $i \geq N$, $(P'_{i}, P'_{i+1}, P'_{i+2})$ is positive, $(P'_{i}, P'_{i+1}, P'_{i+k+1})$ is positive, and $P'_{i+k} \in \Int (P'_i, P'_{i+1}, P'_{i+k+1})$. See the left side of Figure \ref{fig:forward invariance} for configurations of relevant vertices of $P$ and $P'$. 

    Let $i \geq N$ be fixed. 
    Since $P$ is a type-$\alpha$ $N$-representative, $P_{j+k} \in \Int(P_{j}, P_{j+1}, P_{j+k+1})$ for all $j \geq N$. Applying Lemma \ref{lem:triangle lemma} with Equation \eqref{eqn:delta k,1 formula} on $P'_{j}$ for $j \in \{i,i+1,i+2,i+k,i+k+1\}$ gives us 
    \begin{equation} \label{eqn:Sk alpha invariance}
        \begin{aligned}
            & P'_{i} = (1-s_1) P_{i+1} + s_1 P_{i+k+1}; &
            & P'_{i+1} = (1-t_1) P_{i+1} + t_1 P_{i+k+1}; \\
            & P'_{i+1} = (1-s_2) P_{i+2} + s_2 P_{i+k+2}; &
            & P'_{i+2} = (1-t_2) P_{i+2} + t_2 P_{i+k+2}; \\
            & P'_{i+k} = (1-s_3) P_{i+k+1} + s_3 P_{i+2k+1}; &
            & P'_{i+k+1} = (1-t_3) P_{i+k+1} + t_3 P_{i+2k+1},
        \end{aligned}
    \end{equation}
    where $s_1,s_2,s_3 \in (0,1)$ and $t_1,t_2,t_3 \in (1,\infty)$. In particular, this shows $P'_{i+k+1} \not\in P'_i P'_{i+1}$, so the three points $P'_{i}, P'_{i+1}, P'_{i+k+1}$ are in general position. 
    
    To see that $(P'_{i}, P'_{i+1}, P'_{i+2})$ is positive, Equation \eqref{eqn:vector orientations} and \eqref{eqn:Sk alpha invariance} give us  
    \begin{equation} \label{eqn:Sk alpha invariance 2}
        \begin{aligned}
            \cO(P'_{i}, P'_{i+1}, P'_{i+2})
            &= \det(P'_{i+1} - P'_{i}, P'_{i+2} - P'_{i+1}) \\
            &= \det ((s_1 - t_1) P_{i+1} + (t_1 - s_1) P_{i+k+1}, (s_2 - t_2) P_{i+2} + (t_2 - s_2) P_{i+k+2}) \\
            &= (t_1 - s_1)(t_2 - s_2) \det(P_{i+k+2} - P_{i+2}, P_{i+1} - P_{i+k+1}).
        \end{aligned}
    \end{equation}
    Then, since $\cO(P_{i+1}, P_{i+2}, P_{i+k+2}) > 0$ and $P_{i+k+1} \in \Int (P_{i+1}, P_{i+2}, P_{i+k+2})$, Proposition \ref{prop:orientation lemma} implies $\det(P_{i+k+2} - P_{i+2}, P_{i+1} - P_{i+k+1}) > 0$, so $\cO(P'_{i}, P'_{i+1}, P'_{i+2}) > 0$. 

    Next, we show that $P'_{i+k} \in \Int(P'_{i}, P'_{i+1}, P'_{i+k+1})$. 
    Let $r_1 = \frac{1 - s_1}{t_1 - s_1}$ and $r_2 = \frac{s_3}{t_3}$. \eqref{eqn:Sk alpha invariance} implies $r_1, r_2 \in (0,1)$ and 
    \begin{equation*}
        \begin{aligned}
            P'_{i+k} 
            &= (1 - s_3) P_{i+k+1} + s_3 P_{i+2k+1} \\
            &= \frac{(1 - s_3)(t_1 - 1)}{t_1 - s_1} P'_{i} + \frac{(1 - s_3)(1 - s_1)}{t_1 - s_1} P'_{i+1} + \frac{s_3(t_3 - 1)}{t_3 - s_3} P'_{i+k} + \frac{s_3(1 - s_3)}{t_3 - s_3} P'_{i+k+1}.
        \end{aligned}
    \end{equation*}
    It follows that 
    \begin{equation*}
        \begin{aligned}
            P'_{i+k}
            &= \frac{t_3 - s_3}{t_3(s_3 - 1)} \left( \frac{(1 - s_3)(t_1 - 1)}{t_1 - s_1} P'_{i} + \frac{(1 - s_3)(1 - s_1)}{t_1 - s_1} P'_{i+1} + \frac{s_3(1 - s_3)}{t_3 - s_3} P'_{i+k+1} \right) \\
            &= \frac{(t_3 - s_3)(1 - t_1)}{t_3(t_1 - s_1)} P'_i + \frac{(t_3 - s_3)(s_1 - 1)}{t_3(t_1 - s_1)} P'_{i+1} + \frac{s_3}{t_3} P'_{i+k+1} \\
            &= (1 - r_2)(1 - r_1) P'_{i} + (1 - r_2)r_1 P'_{i+1} + r_2 P'_{i+k+1}.
        \end{aligned}
    \end{equation*}
    Observe that the coefficients $(1-r_2)(1-r_1)$, $(1-r_2)r_1$, $r_2$ are all in $(0,1)$ and sum up to $1$, so $P'_{i+k} \in \Int(P'_{i}, P'_{i+1}, P'_{i+k+1})$.
    
    Finally, using Equation \eqref{eqn:vector orientations} and \eqref{eqn:Sk alpha invariance}, we have 
    \begin{equation} \label{eqn:Sk alpha invariance 3}
        \begin{aligned}
            \det(P'_{i+1} - P'_i, P'_{i+k+1} - P'_{i+k}) 
            &= \det((t_1 - s_1) (P_{i+k+1} - P_{i+1}), (t_3 - s_3) (P_{i+2k+1} - P_{i+k+1})) \\
            &= (t_1 - s_1)(t_3 - s_3) \det(P_{i+k+1} - P_{i+1}, P_{i+2k+1} - P_{i+k+1}) \\
            &= (t_1 - s_1)(t_3 - s_3) \cO(P_{i+1}, P_{i+k+1}, P_{i+2k+1}).
        \end{aligned}
    \end{equation}
    Proposition \ref{prop:whirlpool} implies $\cO(P_{i+1}, P_{i+k+1}, P_{i+2k+1}) > 0$, so $\det(P'_{i+1} - P'_i, P'_{i+k+1} - P'_{i+k})  > 0$ Since $P'_{i}, P'_{i+1}, P'_{i+k+1}$ are in general position and $P'_{i+k} \in \Int(P'_{i}, P'_{i+1}, P'_{i+k+1})$, Proposition \ref{prop:orientation lemma} and Equation \eqref{eqn:Sk alpha invariance 3} imply $\cO(P'_{i}, P'_{i+1}, P'_{i+k+1}) > 0$. We conclude that $P'$ is a type-$\alpha$ $N$-representative. 
\end{proof}

\begin{figure}[ht]
    \centering
    \scriptsize
    
    \def\svgwidth{\columnwidth}
    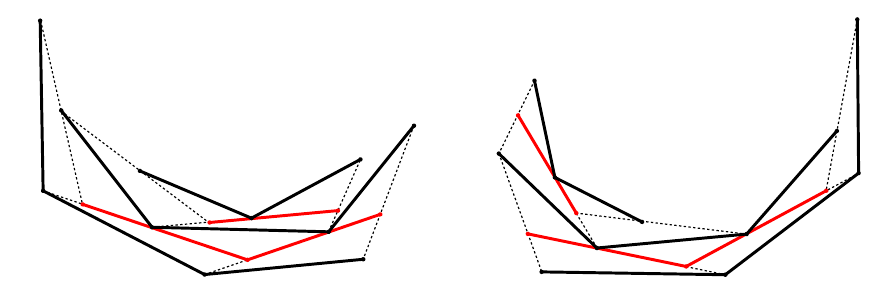

    \caption{Left: Proposition \ref{prop:Sk alpha invariance} configuration. Right: Proposition \ref{prop:Sk beta invariance} configuration.}
    \label{fig:forward invariance}
\end{figure}

\begin{proposition} \label{prop:Sk beta invariance}
    For all $k \geq 2$ and $n \geq 2$, $T_k(\cS_{k,n}^{\beta}) \subset \cS_{k,n}^{\beta}$. 
\end{proposition}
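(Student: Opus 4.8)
Proof proposal. The plan is to run the argument of Proposition~\ref{prop:Sk alpha invariance} essentially verbatim, reassigning the roles of the four points $P_j,P_{j+1},P_{j+k},P_{j+k+1}$ inside Lemma~\ref{lem:triangle lemma} so as to reflect the type-$\beta$ configuration. Fix $N\in\ZZ$, let $P$ be an $N$-representative of type $\beta$ of some $[P]\in\cS_{k,n}^\beta$, and put $P'=T_k(P)$. As in Proposition~\ref{prop:Sk alpha invariance}, $P'$ is again $k$-nice and $T_k$ commutes with projective transformations, so it suffices to show that $P'$ is itself an $N$-representative of type $\beta$ of $[T_k(P)]$, i.e.\ that for every $i\geq N$ the triples $(P'_i,P'_{i+1},P'_{i+2})$ and $(P'_i,P'_{i+1},P'_{i+k})$ are positive and $P'_{i+k+1}\in\Int(P'_i,P'_{i+1},P'_{i+k})$; since $N$ is arbitrary this yields $T_k(\cS_{k,n}^\beta)\subset\cS_{k,n}^\beta$. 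Affine membership of each $P'_i$ is automatic from the convex combinations produced below.

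The setup step is: for each $j\geq N$ the type-$\beta$ hypothesis gives $P_{j+k+1}\in\Int(P_j,P_{j+1},P_{j+k})$, so Lemma~\ref{lem:triangle lemma} applied with $(A,B,C,D)=(P_j,P_{j+k},P_{j+1},P_{j+k+1})$ — for which $O=AB\cap CD=P_jP_{j+k}\cap P_{j+1}P_{j+k+1}=P'_j$ by Equation~\ref{eqn:delta k,1 formula} — produces scalars $s_{(j)}\in(0,1)$ and $t_{(j)}\in(1,\infty)$ with $P'_j=(1-s_{(j)})P_j+s_{(j)}P_{j+k}=(1-t_{(j)})P_{j+1}+t_{(j)}P_{j+k+1}$. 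This is the type-$\beta$ analogue of Equation~\ref{eqn:Sk alpha invariance}; the only change from the type-$\alpha$ computation is that it is now the line $P_{j+1}P_{j+k+1}$, not $P_jP_{j+k}$, that plays the role of the line $CD$ carrying $D=P_{j+k+1}$.

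Then I would carry out the three verifications in parallel with Proposition~\ref{prop:Sk alpha invariance}. (i) Subtracting consecutive expressions from the setup gives $P'_{i+1}-P'_i=(t_{(i)}-s_{(i+1)})(P_{i+1}-P_{i+k+1})$ and $P'_{i+2}-P'_{i+1}=(t_{(i+1)}-s_{(i+2)})(P_{i+2}-P_{i+k+2})$ with both prefactors positive, so $\cO(P'_i,P'_{i+1},P'_{i+2})$ is a positive multiple of $\det(P_{i+1}-P_{i+k+1},\,P_{i+2}-P_{i+k+2})$, which is positive by Proposition~\ref{prop:orientation lemma} applied to the positive triple $(P_{i+1},P_{i+2},P_{i+k+1})$ with interior point $W=P_{i+k+2}$ (both the positivity and the containment being the type-$\beta$ hypothesis at index $i+1$). (ii) The setup places $P'_i,P'_{i+1}$ on the line $P_{i+1}P_{i+k+1}$ and $P'_{i+k},P'_{i+k+1}$ on the line $P_{i+k+1}P_{i+2k+1}$, so solving linear relations gives $P_{i+k+1}=(1-r_1)P'_i+r_1P'_{i+1}$ with $r_1=\frac{t_{(i)}-1}{t_{(i)}-s_{(i+1)}}\in(0,1)$ and $P'_{i+k+1}=(1-r_2)P_{i+k+1}+r_2P'_{i+k}$ with $r_2=\frac{s_{(i+k+1)}}{t_{(i+k)}}\in(0,1)$; substituting yields $P'_{i+k+1}=(1-r_2)(1-r_1)P'_i+(1-r_2)r_1P'_{i+1}+r_2P'_{i+k}$, a genuine convex combination, hence $P'_{i+k+1}\in\Int(P'_i,P'_{i+1},P'_{i+k})$. (iii) From the setup $P'_{i+k}-P'_{i+k+1}=(t_{(i+k)}-s_{(i+k+1)})(P_{i+2k+1}-P_{i+k+1})$, so $\det(P'_{i+1}-P'_i,\,P'_{i+k}-P'_{i+k+1})$ is a positive multiple of $\det(P_{i+1}-P_{i+k+1},\,P_{i+2k+1}-P_{i+k+1})=\cO(P_{i+2k+1},P_{i+k+1},P_{i+1})$, which is positive by the type-$\beta$ clause of Proposition~\ref{prop:whirlpool} (at index $i+1$); combined with (ii) and Proposition~\ref{prop:orientation lemma} this upgrades to $\cO(P'_i,P'_{i+1},P'_{i+k})>0$, completing the argument.

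I expect no essential difficulty: the whole proof is a reindexing of Proposition~\ref{prop:Sk alpha invariance}. The one place to be careful is the combinatorics of the setup step — deciding which of the two lines through $P'_j$ contains $P_{j+k+1}$, hence which ordered quadruple to feed Lemma~\ref{lem:triangle lemma}, and, as a consequence, which pairs among $P'_i,P'_{i+1},P'_{i+k},P'_{i+k+1}$ become collinear through $P_{i+k+1}$ in step (ii). Getting these incidences right is exactly what makes the coefficients $r_1,r_2$ land in $(0,1)$ with the correct convex-combination structure. The only other thing to track is the orientation sign in step (iii), via the identity $\det(X-Z,\,Y-Z)=\cO(Z,X,Y)$ coming from Equation~\ref{eqn:vector orientations} together with the invariance of $\cO$ under $3$-cycles.
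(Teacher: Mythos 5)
Your proposal is correct and follows essentially the same route as the paper: apply Lemma \ref{lem:triangle lemma} at each relevant index to express the $P'_j$ as affine combinations with parameters in $(0,1)$ and $(1,\infty)$, verify positivity of $(P'_i,P'_{i+1},P'_{i+2})$ via Proposition \ref{prop:orientation lemma}, exhibit $P'_{i+k+1}$ as a convex combination of $P'_i,P'_{i+1},P'_{i+k}$, and close with Proposition \ref{prop:whirlpool} plus Proposition \ref{prop:orientation lemma}. Your scalars $t_{(i)},s_{(i+1)},t_{(i+k)},s_{(i+k+1)}$ and coefficients $r_1,r_2$ match the paper's $t_1,s_1,t_3,s_3,r_1,r_2$ exactly.
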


\begin{proof}
    The proof is analogous to the one for Proposition \ref{prop:Sk alpha invariance}. Replacing $\alpha$ with $\beta$, we may work with the setup in the proof of Proposition \ref{prop:Sk alpha invariance}. See the right side of Figure \ref{fig:forward invariance}. 
    
    The key difference between type-$\alpha$ and type-$\beta$ is that conditions for type-$\beta$ $k$-spirals give us the following linear relations when we apply Lemma \ref{lem:triangle lemma} with \eqref{eqn:delta k,1 formula} on $P'_{j}$ for $j \in \{i,i+1,i+2,i+k,i+k+1\}$:
    \begin{equation} \label{eqn:Sk beta invariance}
        \begin{aligned}
            & P'_{i} = (1-t_1) P_{i+1} + t_1 P_{i+k+1}; &
            & P'_{i+1} = (1-s_1) P_{i+1} + s_1 P_{i+k+1}; \\
            & P'_{i+1} = (1-t_2) P_{i+2} + t_2 P_{i+k+2}; &
            & P'_{i+2} = (1-s_2) P_{i+2} + s_2 P_{i+k+2}; \\ 
            & P'_{i+k} = (1-t_3) P_{i+k+1} + t_3 P_{i+2k+1}; &
            & P'_{i+k+1} = (1-s_3) P_{i+k+1} + s_3 P_{i+2k+1},
        \end{aligned}
    \end{equation}
    where $s_1,s_2,s_3 \in (0,1)$ and $t_1,t_2,t_3 \in (1,\infty)$. We can see that $P'_{i+k} \not\in P'_i P'_{i+1}$, so the three points $P'_i, P'_{i+1}, P'_{i+k}$ are in general position. 

    A very similar computation as Equation \eqref{eqn:Sk alpha invariance 2} shows positivity of $(P'_i, P'_{i+1}, P'_{i+2})$, so we will omit it. 
    Next, let $r_1 = \frac{t_1 - 1}{t_1 - s_1}$ and $r_2 = \frac{s_3}{t_3}$. Notice that $(1-r_2)(1-r_1)$, $(1-r_2)r_1$, and $r_2$ are all in $(0,1)$ and sum up to $1$. Also, Equation \eqref{eqn:Sk beta invariance} implies 
    \begin{equation*}
        P'_{i+k+1} 
        = (1 - r_2)(1 - r_1) P'_{i} + (1 - r_2)r_1 P'_{i+1} + r_2 P'_{i+k}.
    \end{equation*}
    This shows $P'_{i+k+1} \in \Int(P'_{i}, P'_{i+1}, P'_{i+k})$. Finally, positivity of $(P'_{i}. P'_{i+1}, P'_{i+k})$ follows from a similar computation as Equation \eqref{eqn:Sk alpha invariance 3}, $P'_{i+k+1} \in \Int(P'_{i}, P'_{i+1}, P'_{i+k})$, the points $P'_{i}, P'_{i+1}, P'_{i+k}$ are in general position, and Proposition \ref{prop:orientation lemma}. 
\end{proof}

\subsection{Invariance of Backward Orbit} \label{subsec:proof of backward invariance}

In this section, we complete the proof of Theorem \ref{thm:spiral polygon invariance} by showing that $\cS_{k,n}^\alpha$ and $\cS_{k,n}^\beta$ are $T_k^{-1}$-invariant. One can derive a formula for $T_k^{-1}$ from Equation \eqref{eqn:delta k,1 formula}. Given any $k$-nice twisted $n$-gon $P'$, $P = T_k^{-1}(P')$ is given by 
\begin{equation} \label{eqn:Tk inverse formula}
    P_i = P'_{i-k-1} P'_{i-k} \cap P'_{i-1} P'_{i}.
\end{equation}

Proposition \ref{prop:spirals are k-nice} implies $T_k^{-1}$ is well-defined on $\cS_{k,n}^\alpha$ and $\cS_{k,n}^\beta$. In general, $T_k^{-1}$ needs not preserve $k$-niceness of twisted polygons. 

\begin{proposition} \label{prop:Sk alpha inverse invariance}
    For all $k \geq 2$ and $n \geq 2$, $T_k^{-1}(S_{k,n}^\alpha) \subset S_{k,n}^\alpha$.
\end{proposition}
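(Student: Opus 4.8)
The plan is to mirror the proof of Proposition \ref{prop:Sk alpha invariance}, now reading the explicit formula \ref{eqn:Tk inverse formula} for $T_k^{-1}$ in place of \ref{eqn:delta k,1 formula}. The one structural change is an index shift: since $P_i = T_k^{-1}(P')_i$ depends on $P'_{i-k-1}, P'_{i-k}, P'_{i-1}, P'_i$, to exhibit an $N$-representative of $[T_k^{-1}(P')]$ it suffices to start from an $M$-representative $P'$ of $[P']$ with $M$ small enough (e.g.\ $M = N-k-2$, so that every $P'$-index and every invocation of the type-$\alpha$ conditions and of Proposition \ref{prop:whirlpool} below occurs at an index $> M$). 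Fix such an $M$-representative $P'$ of type $\alpha$ and put $P = T_k^{-1}(P')$, which is automatically $k$-nice. We must check that for all $i \ge N$: $P_i \in \AA^2$; $(P_i, P_{i+1}, P_{i+2})$ and $(P_i, P_{i+1}, P_{i+k+1})$ are positive; and $P_{i+k} \in \Int(P_i, P_{i+1}, P_{i+k+1})$.

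The first step is to record two affine representations of each $P_i$. The type-$\alpha$ condition of $P'$ at index $i-k-1$ gives $P'_{i-1} \in \Int(P'_{i-k-1}, P'_{i-k}, P'_i)$, and $P_i = P'_{i-k-1}P'_{i-k} \cap P'_{i-1}P'_i$; so Lemma \ref{lem:triangle lemma} applied with $(A,B,C,D) = (P'_{i-k-1}, P'_{i-k}, P'_i, P'_{i-1})$ yields
\[
    P_i = (1-s_i)\,P'_{i-k-1} + s_i\,P'_{i-k} = (1-t_i)\,P'_i + t_i\,P'_{i-1}, \qquad s_i \in (0,1),\ t_i \in (1,\infty).
\]
The first representation shows $P_i$ lies strictly inside the edge $[P'_{i-k-1},P'_{i-k}]$, hence $P_i \in \AA^2$; the second shows $P_i$ lies on the line $P'_{i-1}P'_i$, beyond $P'_{i-1}$. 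Now, on the line $P'_iP'_{i+1}$, the point $P_{i+1}$ lies beyond $P'_i$ (second representation at index $i+1$) while $P_{i+k+1}$ lies strictly between $P'_i$ and $P'_{i+1}$ (first representation at index $i+k+1$), so $P'_i$ is an interior point of the edge $P_{i+1}P_{i+k+1}$ of the triangle $(P_i, P_{i+1}, P_{i+k+1})$; and on the line $P'_{i-1}P'_i$, the point $P_{i+k}$ (first representation at index $i+k$) lies strictly between $P_i$ and $P'_i$. Hence $P_{i+k}$ lies on the open cevian segment from the vertex $P_i$ to the opposite-edge point $P'_i$, which is interior to the triangle; this gives $P_{i+k} \in \Int(P_i, P_{i+1}, P_{i+k+1})$. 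With an interior point available, Proposition \ref{prop:orientation lemma} reduces the positivity of $(P_i, P_{i+1}, P_{i+k+1})$ to that of $(P_{i+1}, P_{i+k+1}, P_{i+k})$, and a short computation via Equation \ref{eqn:vector orientations} and the representations above shows $\cO(P_{i+1}, P_{i+k+1}, P_{i+k})$ is a positive multiple of $\cO(P'_{i-1}, P'_i, P'_{i+1}) > 0$.

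It remains to prove $(P_i, P_{i+1}, P_{i+2})$ is positive, and this is the main obstacle. Unlike in Proposition \ref{prop:Sk alpha invariance}, consecutive vertices of $P = T_k^{-1}(P')$ do \emph{not} lie on a common line of $P'$, so $\cO(P_i, P_{i+1}, P_{i+2})$ does not collapse to a single determinant of the form handled by Proposition \ref{prop:orientation lemma}. Instead, expanding the determinant \ref{eqn:triangle orientation} multilinearly with $P_j = (1-s_j)P'_{j-k-1} + s_jP'_{j-k}$ for $j = i,i+1,i+2$ writes $\cO(P_i, P_{i+1}, P_{i+2})$ as a combination with positive coefficients of the four determinants $\cO(P'_{i-k-1}, P'_{i-k}, P'_{i-k+1})$, $\cO(P'_{i-k-1}, P'_{i-k}, P'_{i-k+2})$, $\cO(P'_{i-k-1}, P'_{i-k+1}, P'_{i-k+2})$, $\cO(P'_{i-k}, P'_{i-k+1}, P'_{i-k+2})$. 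The two ``endpoint'' determinants are positive by local convexity of $P'$ (the positivity of consecutive triples in the definition of an $M$-representative), but the two ``skip one'' determinants need not be, and the crux is to control the resulting cross term $\det\!\bigl(P'_{i-k}-P'_{i-k-1},\,P'_{i-k+2}-P'_{i-k+1}\bigr)$ against the two positive endpoint terms, uniformly in $i$. I expect this to follow by feeding the interior conditions of $P'$ — concretely $P'_{i-1}\in\Int(P'_{i-k-1},P'_{i-k},P'_i)$ and its neighbours, together with Proposition \ref{prop:whirlpool} applied to $P'$ (which says the transversals of $P'$ are positively oriented and hence bounds how sharply the arc of $P'$ can turn) — into Proposition \ref{prop:orientation lemma}. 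Once $(P_i, P_{i+1}, P_{i+2})$ is positive for all $i \ge N$, $P$ is an $N$-representative of type $\alpha$, so $T_k^{-1}([P']) \in \cS_{k,n}^\alpha$; this, the analogous argument for $\cS_{k,n}^\beta$ (with the reversed containments of $P'$, parallel to Proposition \ref{prop:Sk beta invariance}), and the forward inclusions of Propositions \ref{prop:Sk alpha invariance} and \ref{prop:Sk beta invariance} together prove Theorem \ref{thm:spiral polygon invariance}.
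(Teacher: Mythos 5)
Your treatment of the first three conditions tracks the paper's proof closely and is correct: the two barycentric representations of each $P_i$ from Lemma \ref{lem:triangle lemma} (one along $P'_{i-k-1}P'_{i-k}$, one along $P'_{i-1}P'_i$) are exactly Equation \ref{eqn:Sk alpha inverse formula}, your cevian argument for $P_{i+k}\in\Int(P_i,P_{i+1},P_{i+k+1})$ is a legitimate variant of the paper's convex-combination computation, and the positivity of $(P_i,P_{i+1},P_{i+k+1})$ reduces, as you say, to a positive multiple of $\cO(P'_{i-1},P'_i,P'_{i+1})$. The index bookkeeping ($M=N-k-2$ versus the paper's statement that $T_k^{-1}$ of an $N$-representative is an $(N+k+1)$-representative) is equivalent and harmless.

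The genuine gap is precisely where you flag it: the positivity of $(P_i,P_{i+1},P_{i+2})$. Your proposed route --- expand $\cO(P_i,P_{i+1},P_{i+2})$ multilinearly in the representations $P_j=(1-s_j)P'_{j-k-1}+s_jP'_{j-k}$ and hope to dominate the two ``skip-one'' determinants by the two locally convex ones --- is not completed, and it is not clear it can be: the signs of $\cO(P'_{i-k-1},P'_{i-k},P'_{i-k+2})$ and $\cO(P'_{i-k-1},P'_{i-k+1},P'_{i-k+2})$ genuinely can be negative for a spiral, and the coefficients $s_i,s_{i+1},s_{i+2}$ are not a priori coupled to the turning of the arc in a way that Proposition \ref{prop:orientation lemma} alone can exploit. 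The paper closes this step by a different mechanism: it introduces the single auxiliary vertex $D=P'_{i-k+1}$ of the \emph{preimage} polygon as a pivot and applies Lemma \ref{lem:orientation of pentagon} to $O=P_{i+1}$, $A=P_i$, $B=P_{i+k+1}$, $C=P_{i+2}$, $D=P'_{i-k+1}$. The four hypothesis orientations and $(B,O,D)$ are each either already established ($(A,O,B)$ is the positivity of $(P_i,P_{i+1},P_{i+k+1})$; $(B,O,C)$ comes from Proposition \ref{prop:orientation lemma} applied at index $i+1$) or collapse, via the barycentric representations, to positive multiples of orientations of consecutive triples of $P'$ and of $(P'_{i-k},P'_{i-k+1},P'_i)$, $(P'_{i-k},P'_{i-k+1},P'_{i+1})$, which the type-$\alpha$ conditions on $P'$ supply; the backward direction of Lemma \ref{lem:orientation of pentagon} then yields $(A,O,C)=(P_i,P_{i+1},P_{i+2})$ positive. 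Note also that Proposition \ref{prop:whirlpool} is not what is needed here --- the missing ingredient is the pentagon lemma with a well-chosen fifth point, not the transversal orientation. To repair your proof, replace the multilinear expansion by this application of Lemma \ref{lem:orientation of pentagon}.
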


\begin{proof}
    Given $P'$ a type-$\alpha$ $N$-representative, we will show that $P = T_k^{-1}(P')$ is a type-$\alpha$ $(N+k+1)$-representative by proving that for all $i \geq N+k+1$, $(P_i, P_{i+1}, P_{i+2})$ is positive, $(P_i, P_{i+1}, P_{i+k+1})$ is positive, the four points $P_i, P_{i+1}, P_{i+k}, P_{i+k+1}$ are in general position, and $P_{i+k} \in \Int(P_i, P_{i+1}, P_{i+k+1})$. See the left side of Figure \ref{fig:backward invariance} for configurations of relevant vertices of $P'$ and $P$. 

    Let $i \geq N+k+1$ be fixed. Since $P'$ is a type-$\alpha$ $N$-representative, we must have $P'_{j+k} \in \Int(P'_j, P'_{j+1}, P'_{j+k+1})$ for all $j \geq N$. Applying Lemma \ref{lem:triangle lemma} with Equation \eqref{eqn:Tk inverse formula} on $P_j$ for $j \in \{i, i+1, i+2, i+k, i+k+1\}$ gives us 
    \begin{equation} \label{eqn:Sk alpha inverse formula}
        \begin{aligned}
            & P_i = (1 - s_1) P'_{i-k} + s_1 P'_{i-k-1}; &
            & P_i = (1 - t_1) P'_{i} + t_1 P'_{i-1}; \\
            & P_{i+1} = (1 - s_2) P'_{i-k+1} + s_2 P'_{i-k}; &
            & P_{i+1} = (1 - t_2) P'_{i+1} + t_3 P'_{i}; \\
            & P_{i+2} = (1 - s_3) P'_{i-k+2} + s_3 P'_{i-k+1}; &
            & P_{i+k} = (1 - s_4) P'_i + s_4 P'_{i-1}; \\
            & P_{i+k+1} = (1 - s_5) P'_{i+1} + s_5 P'_i, & &
        \end{aligned}
    \end{equation}
    where $s_1, s_2, s_3, s_4, s_5 \in (0,1)$ and $t_1, t_2 \in (1, \infty)$. 

    We first show that $(P_i, P_{i+1}, P_{i+k+1})$ is positive. From Equation \eqref{eqn:Sk alpha inverse formula} we have 
    \begin{equation*} \label{eqn:Sk alpha inverse computation}
        \cO(P_i, P_{i+1}, P_{i+k+1})
        = (t_1t_2(1 - s_5) - t_1(1 - t_2)s_5) \cO(P'_{i-1}, P'_i, P'_{i+1}).
    \end{equation*}
    It follows that $\cO(P_i, P_{i+1}, P_{i+k+1}) > 0$, so $(P_i, P_{i+1}, P_{i+k+1})$ is positive. 

    Next, we show that $P_{i+k} \in \Int(P_i, P_{i+1}, P_{i+k+1})$. Let $r_1 = \frac{t_2 - 1}{t_2 - s_5}$ and $r_2 = \frac{s_4}{t_1}$. Equation \eqref{eqn:Sk alpha inverse formula} implies $r_1, r_2 \in (0,1)$ and 
    \begin{equation*}
        P_{i+k} = (1 - r_2)(1 - r_1) P_{i+1} + (1 - r_2)r_1 P_{i+k+1} + r_2 P_{i}.
    \end{equation*}
    Observe that the coefficients $(1 - r_2)(1 - r_1)$, $(1 - r_2)r_1$, and $r_2$ are all in $(0,1)$ and sum up to 1, so $P_{i+k} \in \Int(P_i, P_{i+1}, P_{i+k+1})$. 

    Finally, we check $(P_i, P_{i+1}, P_{i+2})$ is positive. We aim to invoke Lemma \ref{lem:orientation of pentagon} with the following choices of vertices:
    \begin{equation} \label{eqn:Sk alpha inverse choice of vertices}
        O = P_{i+1}; \ \ 
        A = P_{i}; \ \ 
        B = P_{i+k+1}; \ \ 
        C = P_{i+2}; \ \ 
        D = P'_{i-k+1}.
    \end{equation}
    Positivity of $(A, O, B)$ is a direct consequence of the above argument. Positivity of $(B, O, C)$ follows from positivity of $(P_{i+1}, P_{i+2}, P_{i+k+2})$, $P_{i+k+1} \in \Int(P_{i+1}, P_{i+2}, P_{i+k+2})$, and Proposition \ref{prop:orientation lemma}. Next, observe that 
    \begin{equation} \label{eqn:Sk alpha inverse positivity}
        \begin{aligned}
            & \cO(A, O, D)
            = s_1 s_2 \cO(P'_{i-k-1}, P'_{i-k}, P'_{i-k+1}); \\
            & \cO(C, O, D)
            = (1 - s_3) s_2 \cO(P'_{i-k}, P'_{i-k+1}, P'_{i-k+2}); \\
            & \cO(B, O, D)
            = s_2(1 - s_5) \cO(P'_{i-k}, P'_{i-k+1}, P'_{i+1}) + s_2s_5 \cO(P'_{i-k}, P'_{i-k+1}, P'_{i}); 
        \end{aligned}
    \end{equation}
    Then, positivity of $(A, O, D)$ and $(C, O, D)$ follows from positivity of $(P'_{i-k-1}, P'_{i-k}, P'_{i-k+1})$ and $(P'_{i-k}, P'_{i-k+1}, P'_{i-k+2})$. To see that $(B, O, D)$ is positive, apply Proposition \ref{prop:orientation lemma} on $(P'_{i-k}, P'_{i-k+1}, P'_{i+1})$ positive and $P'_{i} \in \Int(P'_{i-k}, P'_{i-k+1}, P'_{i+1})$ to get $(P'_{i-k}, P'_{i-k+1}, P'_i)$ positive. The backward direction of Lemma \ref{lem:orientation of pentagon} then implies $(P_i, P_{i+1}, P_{i+2})$ is positive. We conclude that $P$ is a type-$\alpha$ $(N+k+1)$-representative. 
\end{proof}

\begin{figure}[ht]
    \centering
    \scriptsize
    
    \def\svgwidth{0.85\columnwidth}
    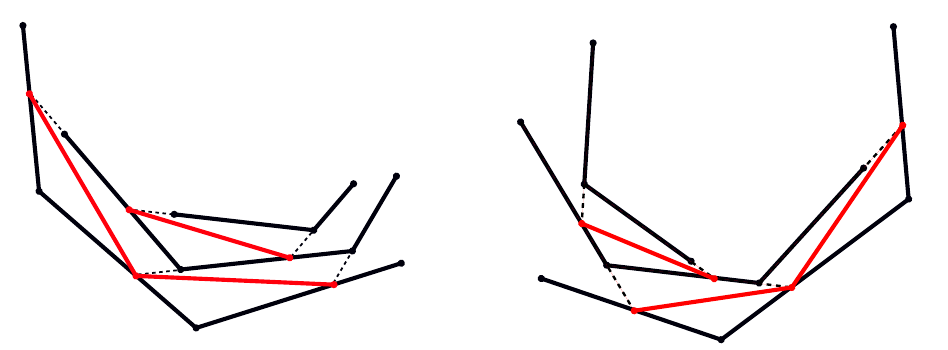

    \caption{Left: Proposition \ref{prop:Sk alpha inverse invariance} configuration. Right: Proposition \ref{prop:Sk beta inverse invariance} configuration.}
    \label{fig:backward invariance}
\end{figure}

\begin{proposition} \label{prop:Sk beta inverse invariance}
    For all $k \geq 2$ and $n \geq 2$, $T_k^{-1}(S_{k,n}^\beta) \subset S_{k,n}^\beta$.
\end{proposition}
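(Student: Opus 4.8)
The plan is to run the same argument as in Proposition~\ref{prop:Sk alpha inverse invariance}, interchanging the roles played by $P_{i+k}$ and $P_{i+k+1}$ as forced by the type-$\beta$ geometry. Given an $N$-representative $P'$ of type $\beta$, set $P = T_k^{-1}(P')$; I will show $P$ is an $(N+k+1)$-representative of type $\beta$. Since the image of a $k$-nice polygon is $k$-nice, it suffices to fix $i \ge N+k+1$ and verify three statements: $(P_i,P_{i+1},P_{i+k})$ is positive, $P_{i+k+1}\in\Int(P_i,P_{i+1},P_{i+k})$, and $(P_i,P_{i+1},P_{i+2})$ is positive. First I would apply Lemma~\ref{lem:triangle lemma} to the inverse formula (Equation~\ref{eqn:Tk inverse formula}) at each $P_j$ for $j\in\{i,i+1,i+2,i+k,i+k+1\}$. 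The type enters only here: because $P'$ is type $\beta$, the vertex among the four points defining $P_j$ that lies inside the triangle spanned by the other three is the one singled out by $P'_{m+k+1}\in\Int(P'_m,P'_{m+1},P'_{m+k})$. As in Equation~\ref{eqn:Sk alpha inverse formula}, this writes each $P_j$ simultaneously as a strict convex combination of two consecutive ``back-strand'' vertices $P'_m,P'_{m+1}$ (coefficient in $(0,1)$) and as a point on a ``front-strand'' line through two consecutive $P'_m$'s lying beyond one of them (coefficient in $(1,\infty)$). In particular $P_i,P_{i+k}$ both lie on the line $P'_{i-1}P'_i$ and $P_{i+1},P_{i+k+1}$ both lie on $P'_iP'_{i+1}$.

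The two facts about the triangle $(P_i,P_{i+1},P_{i+k})$ should be routine. Expanding $\cO(P_i,P_{i+1},P_{i+k})$ through the front-strand combinations, every mixed term cancels and one is left with a positive multiple of $\cO(P'_{i-1},P'_i,P'_{i+1})$, which is positive because $i-1\ge N$ and $P'$ is a type-$\beta$ representative. For the containment, I would solve the $2\times2$ system expressing $P_i,P_{i+k}$ on the line $P'_{i-1}P'_i$ to write $P'_i$ as a strict convex combination of $P_i$ and $P_{i+k}$, then substitute into the relation $P_{i+k+1}=(1-\lambda)P'_i+\lambda P_{i+1}$ (with $\lambda\in(0,1)$) coming from the front-strand form on $P'_iP'_{i+1}$; the result exhibits $P_{i+k+1}$ as a convex combination of $P_i,P_{i+1},P_{i+k}$ with all three coefficients strictly positive, so it lies in the open triangle. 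These run parallel to the corresponding steps in Proposition~\ref{prop:Sk alpha inverse invariance} with $i+k$ and $i+k+1$ swapped, and --- unlike the forward-invariance proofs --- need no appeal to Proposition~\ref{prop:whirlpool}.

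The real work is the positivity of the short triangle $(P_i,P_{i+1},P_{i+2})$, which does not collapse to a single orientation and instead needs Lemma~\ref{lem:orientation of pentagon}. I expect the same auxiliary configuration as in the type-$\alpha$ case to work: $O=P_{i+1}$, $A=P_i$, $B=P_{i+k+1}$, $C=P_{i+2}$, $D=P'_{i-k+1}$. Then $(A,O,B)=(P_i,P_{i+1},P_{i+k+1})$ is positive by Proposition~\ref{prop:orientation lemma} applied to the triangle $(P_i,P_{i+1},P_{i+k})$ with interior point $P_{i+k+1}$; $(B,O,C)=(P_{i+k+1},P_{i+1},P_{i+2})$ is a $3$-cycle of $(P_{i+1},P_{i+2},P_{i+k+1})$, which is the already-proved positivity of $(P_j,P_{j+1},P_{j+k})$ at index $j=i+1$ (no circularity, since that fact was established for all indices $\ge N+k+1$ independently of the short-triangle claim); and $(A,O,D)$, $(C,O,D)$ reduce through the back-strand combinations to positive multiples of the consecutive-triple orientations $\cO(P'_{i-k-1},P'_{i-k},P'_{i-k+1})$ and $\cO(P'_{i-k},P'_{i-k+1},P'_{i-k+2})$. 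With the four hypotheses of the pentagon lemma in hand, its backward direction reduces the goal to showing $(B,O,D)=(P_{i+k+1},P_{i+1},P'_{i-k+1})$ is positive; expanding this as a positively weighted sum of $\cO(P'_{i-k},P'_{i-k+1},P'_i)$ and $\cO(P'_{i-k},P'_{i-k+1},P'_{i+1})$, the first is a ``$(m,m+1,m+k)$'' triple, positive directly from the type-$\beta$ hypothesis at $m=i-k$, while the second is a ``$(m,m+1,m+k+1)$'' triple whose positivity follows from the type-$\beta$ hypothesis at $m=i-k$ together with Proposition~\ref{prop:orientation lemma}. The main obstacle is exactly this last bookkeeping: correctly matching each $P'$-orientation that appears to a specific consequence of the type-$\beta$ definition, and confirming every index shift lands in the range $m\ge N$ guaranteed by $i\ge N+k+1$ (the shift $m=i-k-1$ being the binding one). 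Together with Propositions~\ref{prop:Sk beta invariance} and \ref{prop:Sk alpha inverse invariance} this finishes Theorem~\ref{thm:spiral polygon invariance}.
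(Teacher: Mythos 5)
Your proposal is correct and follows essentially the same route as the paper's proof: the same reduction to an $(N+k+1)$-representative, the same use of Lemma \ref{lem:triangle lemma} with Equation \ref{eqn:Tk inverse formula}, the same convex-combination argument for $(P_i,P_{i+1},P_{i+k})$ and the containment of $P_{i+k+1}$, and the same pentagon configuration $O=P_{i+1}$, $A=P_i$, $B=P_{i+k+1}$, $C=P_{i+2}$, $D=P'_{i-k+1}$ with $(B,O,D)$ handled by exactly the expansion and the application of Proposition \ref{prop:orientation lemma} you describe. Your index bookkeeping (including the binding shift $i-k-1\ge N$) matches the paper's.
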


\begin{proof}
    The proof is similar to that of Lemma \ref{prop:Sk alpha inverse invariance} (See right side of Figure \ref{fig:backward invariance}). We will point out some key differences. Replacing $\alpha$ with $\beta$, we may work with the setup in the proof of Proposition \ref{prop:Sk alpha inverse invariance}. Applying Lemma \ref{lem:triangle lemma} with \eqref{eqn:Tk inverse formula} on $P_j$ for $j \in \{i, i+1, i+2, i+k, i+k+1\}$ gives us 
    \begin{equation} \label{eqn:Sk beta inverse formula}
        \begin{aligned}
            & P_i = (1 - s_1) P'_{i-k} + s_1 P'_{i-k-1}; &
            & P_i = (1 - t_1) P'_{i-1} + t_1 P'_{i}; \\
            & P_{i+1} = (1 - s_2) P'_{i-k+1} + s_2 P'_{i-k}; &
            & P_{i+1} = (1 - t_2) P'_{i} + t_3 P'_{i+1}; \\
            & P_{i+2} = (1 - s_3) P'_{i-k+2} + s_3 P'_{i-k+1}; &
            & P_{i+k} = (1 - s_4) P'_{i-1} + s_4 P'_{i}; \\
            & P_{i+k+1} = (1 - s_5) P'_{i} + s_5 P'_{i+1}, & &
        \end{aligned}
    \end{equation}
    where $s_1, s_2, s_3, s_4, s_5 \in (0,1)$ and $t_1, t_2 \in (1, \infty)$. 
    Positivity of $(P_i, P_{i+1}, P_{i+k})$ follows from a similar computation as in \eqref{eqn:Sk alpha inverse computation}. 
    Next, let $r_1 = \frac{1 - s_4}{t_1 - s_4}$ and $r_2 = \frac{s_5}{t_2}$. Equation \eqref{eqn:Sk beta inverse formula} implies
    \begin{equation*}
        P_{i+k+1} = (1 - r_2)(1 - r_1) P_{i+k} + (1 - r_2)r_1 P_{i} + r_2 P_{i+1}.
    \end{equation*}
    Observe that the coefficients $(1 - r_2)(1 - r_1)$, $(1 - r_2)r_1$, and $r_2$ are all in $(0,1)$ and sum up to 1, so $P_{i+k+1} \in \Int(P_i, P_{i+1}, P_{i+k})$. 

    Finally, assign $O,A,B,C,D$ to be the same vertices as in \eqref{eqn:Sk alpha inverse choice of vertices}. Positivity of $(A, O, B)$, $(B, O, C)$, $(C, O, D)$, $(A, O, D)$, and $(B, O, D)$ follows from a very similar proof as that of Proposition \ref{prop:Sk alpha inverse invariance}, with \eqref{eqn:Sk alpha inverse positivity} replaced by 
    \begin{equation*}
        \begin{aligned}
            & \cO(A, O, D)
            = s_1 s_2 \cO(P'_{i-k-1}, P'_{i-k}, P'_{i-k+1}); \\
            & \cO(C, O, D)
            = (1 - s_3) s_2 \cO(P'_{i-k}, P'_{i-k+1}, P'_{i-k+2}); \\
            & \cO(B, O, D)
            = s_2 (1 - s_5) \cO(P'_{i-k}, P'_{i-k+1}, P'_{i}) + s_2 s_5 \cO(P'_{i-k}, P'_{i-k+1}, P'_{i+1}). \\
        \end{aligned}
    \end{equation*}
    The backward direction of Proposition \ref{lem:orientation of pentagon} then implies $(P_i, P_{i+1}, P_{i+2})$ is positive. 
\end{proof}

We conclude this section by stating that Proposition \ref{prop:Sk alpha invariance}, \ref{prop:Sk beta invariance}, \ref{prop:Sk alpha inverse invariance}, \ref{prop:Sk beta inverse invariance} together prove Theorem \ref{thm:spiral polygon invariance}.

\section{Coordinate Representation of 3-Spirals} \label{sec:partition}

\subsection{The Tic-Tac-Toe Grids} \label{subsec:definition of tic tac toe}

Recall the intervals $I = (-\infty, 0)$, $J = (0,1)$, $K = (1,\infty)$ from $\S$\ref{subsec:tic tac toe}. One can partition $\RR^2$ into a $3 \times 3$ grid. See Figure \ref{fig:tic-tac-toe grid}. We make the following definition:

\begin{definition} \label{def:checkerboard polygon}
    For $n \geq 2$, let $S_n(I, J)$ be the subset of $\cP_n$ that satisfies the following: given $[P] \in S_n(I, J)$, for all $i \in \{0, \ldots, n-1\}$, $(x_{2i}, x_{2i+1}) \in I \times J$. We similarly define $S_n(K, J)$, $S_n(J, I)$, and $S_n(J, K)$. 
\end{definition}

The following symmetries of the four grids follow directly from Definition \ref{def:checkerboard polygon}. 

\begin{proposition} \label{prop:tic tac toe symmetry}
    For $i \in \ZZ$, define the map $\sigma_i: \ZZ \rightarrow \ZZ$ by $\sigma_i(x) = x + i$. Define the map $\iota: \ZZ \rightarrow \ZZ$ by $\iota(x) = -x$. 
    Given $[P] \in \cP_n$, the following are true:
    \begin{itemize}
        \item If $[P] \in S_n(I,J)$, then $[P \circ \sigma_i] \in S_n(I,J)$ for all $i \in \ZZ$. This also holds for $S_n(K, J)$, $S_n(J, I)$, and $S_n(J, K)$. 
        \item $[P] \in S_n(I,J)$ if and only if $[P \circ \iota] \in S_n(J,I)$. 
        \item $[P] \in S_n(K,J)$ if and only if $[P \circ \iota] \in S_n(J,K)$. 
    \end{itemize}
\end{proposition}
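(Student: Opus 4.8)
The plan is to reduce the entire statement to a description of how the corner invariants $(x_0,\dots,x_{2n-1})$ of $[P]$ transform when $P$ is precomposed with $\sigma_i$ or with $\iota$. Both reindexings commute with the pointwise action of $\PGL_3(\RR)$ and send twisted $n$-gons to twisted $n$-gons, so they descend to well-defined self-maps of $\cP_n$; once the two transformation rules for corner invariants are in hand, each bullet follows immediately from Definition~\ref{def:checkerboard polygon}, since the defining condition of each of the four sets is simply that \emph{every} pair $(x_{2i},x_{2i+1})$ lie in a prescribed product box.

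First I would treat the shift. Put $Q=P\circ\sigma_1$, so $Q_j=P_{j+1}$. Substituting into Equation~\ref{eqn:corner invariant def} and comparing with the defining expressions for $x_{2j+2}$ and $x_{2j+3}$, one sees that the corner invariants $(y_m)$ of $Q$ satisfy $y_m=x_{m+2}$, and hence $P\circ\sigma_i$ has corner invariants $y_m=x_{m+2i}$, indices read modulo $2n$ via $x_\ell=x_{\ell+2n}$. Thus $\sigma_i$ merely cyclically permutes the pairs $(x_{2j},x_{2j+1})$, which preserves the condition ``every pair lies in the box $B$'' for any box $B=I\times J$, $K\times J$, $J\times I$, or $J\times K$. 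This proves the first bullet for all four grids at once.

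Next I would treat the reversal. Put $Q=P\circ\iota$, so $Q_j=P_{-j}$ (a twisted $n$-gon with monodromy $M^{-1}$). Substituting $Q_j=P_{-j}$ into the defining expression for $y_{2j}$ and setting $m=-j$, a term-by-term comparison shows it coincides with the right-hand side of the formula for $x_{2m+1}$; similarly $y_{2j+1}$ coincides with the formula for $x_{2m}$ at $m=-j$. Using periodicity this gives the rule $y_{2j}=x_{2n-2j+1}$ and $y_{2j+1}=x_{2n-2j}$; that is, reversal interchanges the even and odd corner invariants and reflects the index. Consequently $(y_{2j},y_{2j+1})=(x_{2(n-j)+1},\,x_{2(n-j)})$, so if every pair of $P$ lies in $I\times J$ then every pair of $Q$ lies in $J\times I$, i.e.\ $[P]\in S_n(I,J)\Rightarrow[P\circ\iota]\in S_n(J,I)$, and likewise $S_n(K,J)\Rightarrow S_n(J,K)$. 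Since $\iota^2=\mathrm{id}$, applying the same implication to $P\circ\iota$ gives the converses, completing the last two bullets.

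The two substitutions into Equation~\ref{eqn:corner invariant def} are pure bookkeeping; the only step that needs genuine care is the reversal computation, where one must match the two \emph{distinct} formulas of Equation~\ref{eqn:corner invariant def} to each other under $j\mapsto-j$, correctly tracking the even/odd interchange together with the index reflection $2j\mapsto 2n-2j+1$ and $2j+1\mapsto 2n-2j$. Should the four arguments of $\chi$ emerge in a different order than in the target formula, one reconciles them using the elementary symmetry $\chi(A,B,C,D)=\chi(B,A,D,C)$ of the inverse cross ratio; with the substitution above the expressions in fact line up directly.
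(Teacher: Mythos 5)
Your proposal is correct and supplies exactly the index bookkeeping that the paper leaves implicit: the paper offers no proof, asserting the proposition ``follows directly from Definition~\ref{def:checkerboard polygon}.'' Your transformation rules $y_m=x_{m+2i}$ for the shift and $(y_{2j},y_{2j+1})=(x_{2(n-j)+1},x_{2(n-j)})$ for the reversal check out against Equation~\ref{eqn:corner invariant def} (the four arguments of $\chi$ do line up directly, so the $\chi(A,B,C,D)=\chi(B,A,D,C)$ fallback is not needed), and the bullets follow as you describe.
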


To understand the geometry implied by the corner invariants, we need to examine what happens when the corner invariants take value from $0,1,\infty$. 

\begin{proposition} \label{prop:P2 position and corner invariants}
    For all $[P] \in \cP_n$ with corner invariants $x_j = x_j(P)$ and $i \in \ZZ$, we have the following correspondence between the position of $P_{i+2}$ and the values of $x_{2i}$ and $x_{2i+1}$:
    \begin{center}
        \begin{tabular}{c|c||c|c}
            Configuration & Coordinates &
            Configuration & Coordinates \\
            \hline 
            $P_{i+2} \in P_{i+1} P_{i}$ & $x_{2i} = 0$ &
            $P_{i+2} \in P_{i-1} P_{i+1}$ & $x_{2i+1} = 0$ \\
            $P_{i+2} \in P_{i+1} P_{i-2}$ & $x_{2i} = 1$ &
            $P_{i+2} \in P_{i-1} P_{i-2}$ & $x_{2i+1} = 1$ \\
            $P_{i+2} \in P_{i+1} P_{i-1}$ & $x_{2i} = \infty$ &
            $P_{i+2} \in P_{i-1} P_{i}$ & $x_{2i+1} = \infty $ 
        \end{tabular}
    \end{center}
\end{proposition}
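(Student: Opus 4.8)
The plan is to reduce the whole statement to a single elementary fact about the inverse cross ratio and then, for each of the six rows of the table, identify which coincidence among the four arguments of $\chi$ is forced by the stated position of $P_{i+2}$.

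\emph{Step 1: the key fact.} Directly from the formula \eqref{eqn:chi definition}, if $A,B,C,D$ are collinear then $\chi(A,B,C,D)$, viewed as a Möbius function of any one of its four arguments with the other three held fixed and distinct, satisfies
\begin{gather*}
\chi(A,B,C,D)=0 \iff A=B \text{ or } C=D, \\
\chi(A,B,C,D)=\infty \iff A=C \text{ or } B=D, \\
\chi(A,B,C,D)=1 \iff A=D \text{ or } B=C.
\end{gather*}
By \eqref{eqn:chi definition lines} the same statement holds verbatim for four lines through a common point. I would record this as a one-line observation (clear denominators in \eqref{eqn:chi definition}).

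\emph{Step 2: reading off the two families.} Fix $P_{i-2},P_{i-1},P_i,P_{i+1}$, assumed to be in general position (this is forced if the corner invariants are to be finite and genuinely $P_{i+2}$-dependent; see the obstacle below). By \eqref{eqn:corner invariant def}, writing $\omega=P_{i-2}P_{i-1}$, $C=\omega\cap P_iP_{i+1}$ and $D=\omega\cap P_{i+1}P_{i+2}$, we have $x_{2i}=\chi(P_{i-2},P_{i-1},C,D)$, where $C$ is fixed and $D$ varies with $P_{i+2}$. Since $P_{i-2}\neq P_{i-1}$, Step 1 leaves exactly three ways for $x_{2i}$ to reach $0,\infty,1$: $C=D$, $D=P_{i-1}$, $D=P_{i-2}$. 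Unwinding each: $C=D$ forces the line $P_{i+1}P_{i+2}$ through the point $C\in P_iP_{i+1}$, and $C\neq P_{i+1}$ by general position, so this means $P_{i+2}\in P_{i+1}P_i$; $D=P_{i-1}$ means $P_{i-1}\in P_{i+1}P_{i+2}$, i.e.\ $P_{i+2}\in P_{i+1}P_{i-1}$; $D=P_{i-2}$ means $P_{i+2}\in P_{i+1}P_{i-2}$. The two remaining coincidences allowed by Step 1, $C=P_{i-2}$ and $C=P_{i-1}$, are the collinearities of $P_{i-2},P_i,P_{i+1}$ and of $P_{i-1},P_i,P_{i+1}$, ruled out by general position (the second already by the twisted-polygon axiom). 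This is the left half of the table. The case of $x_{2i+1}=\chi(P_{i+2},P_{i+1},C',D')$, with $C'=P_{i+1}P_{i+2}\cap P_iP_{i-1}$ and $D'=P_{i+1}P_{i+2}\cap P_{i-1}P_{i-2}$, is the mirror image: $P_{i+2}\neq P_{i+1}$, so by Step 1 the three relevant coincidences are $C'=D'$ (which forces $C'=D'=P_{i-1}$ since $P_iP_{i-1}$ and $P_{i-1}P_{i-2}$ meet only there, hence $P_{i-1}\in P_{i+1}P_{i+2}$, i.e.\ $P_{i+2}\in P_{i-1}P_{i+1}$, giving $x_{2i+1}=0$), $P_{i+2}=C'$ (i.e.\ $P_{i+2}\in P_{i-1}P_i$, giving $\infty$), and $P_{i+2}=D'$ (i.e.\ $P_{i+2}\in P_{i-1}P_{i-2}$, giving $1$); the two extra possibilities $P_{i+1}=C'$ and $P_{i+1}=D'$ are again collinearities excluded by general position. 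That gives the right half, so the table follows row by row.

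\emph{Main obstacle.} There is no genuine difficulty here — the content is bookkeeping — but two points need care. First, one must line up the argument order of $\chi$ in \eqref{eqn:corner invariant def} precisely with the slots in Step 1, so that each configuration lands on the correct one of $0,1,\infty$; a transposition there scrambles the table. Second, the genericity hypothesis is genuinely needed: if $P_{i-2},P_{i-1},P_i,P_{i+1}$ is only required to be a valid stretch of a twisted polygon (so only consecutive triples are non-collinear), then, e.g., collinearity of $P_{i-2},P_{i-1},P_{i+1}$ makes $C=P_{i+1}$, hence $C=D$ identically, hence $x_{2i}\equiv 0$, and the claimed equivalence breaks; the write-up should therefore either assume $P_{i-2},P_{i-1},P_i,P_{i+1}$ in general position outright, or remark that each excluded coincidence corresponds exactly to one of $x_{2i},x_{2i+1}$ being constant. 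A convenient geometric repackaging of Step 2, worth stating, is that $x_{2i}$ is the cross ratio of the pencil $(P_{i+1}P_{i-2},\,P_{i+1}P_{i-1},\,P_{i+1}P_i,\,P_{i+1}P_{i+2})$ through $P_{i+1}$, and $x_{2i+1}$ the cross ratio of $(P_{i-1}P_{i+2},\,P_{i-1}P_{i+1},\,P_{i-1}P_i,\,P_{i-1}P_{i-2})$ through $P_{i-1}$ (via \eqref{eqn:chi prop}), after which the concurrent-lines form of Step 1 applies directly and only one line of each pencil moves with $P_{i+2}$.
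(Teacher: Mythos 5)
Your proposal is correct and is essentially the paper's own argument: the paper converts $x_{2i}$ and $x_{2i+1}$ into inverse cross ratios of the pencils of lines through $P_{i+1}$ and $P_{i-1}$ via Equation \ref{eqn:chi definition lines} and reads each table entry off a coincidence of two lines, which is exactly the ``repackaging'' you describe at the end, while your main text carries out the equivalent point-on-a-line version. Your added remarks on the degeneration rules for $\chi$ and on the need for general position of $P_{i-2},P_{i-1},P_i,P_{i+1}$ are correct but do not change the route.
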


\begin{figure}[ht]
    \centering
    \footnotesize
    
    \def\svgwidth{0.35\columnwidth}
    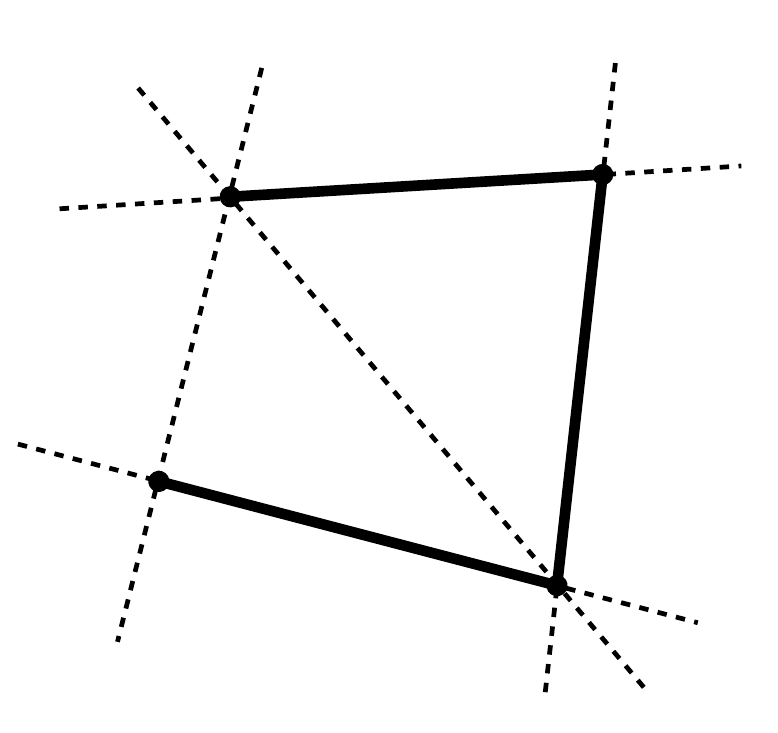

    \caption{Configurations of points and lines in the proof of Proposition \ref{prop:P2 position and corner invariants}.}
    \label{fig:geometry_of_corner_inv}
\end{figure}

\begin{proof}
    Consider the following lines: 
    \begin{equation*}
        \begin{aligned}
            l_1 &= P_{i+1} P_{i-2}; & l_2 &= P_{i+1} P_{i-1}; &
            l_3 &= P_{i+1} P_{i}; & l_4 &= P_{i+1} P_{i+2}; \\
            m_1 &= P_{i-1} P_{i+2}; & m_2 &= P_{i-1} P_{i+1}; &
            m_3 &= P_{i-1} P_{i}; & m_4 &= P_{i-1} P_{i-2}.
        \end{aligned}
    \end{equation*}
    See Figure \ref{fig:geometry_of_corner_inv} for a visualization of the configurations of points and lines. 
    Equation \eqref{eqn:corner invariant lines def} implies $x_{2i} = \chi(l_1,l_2,l_3,l_4)$ and $x_{2i+1} = \chi(m_1,m_2,m_3,m_4)$. This yields
    \begin{center}
        \begin{tabular}{c|c|c||c|c|c}
            Configuration & Lines & Coordinates &
            Configuration & Lines & Coordinates \\
            \hline 
            $P_{i+2} \in P_{i+1} P_{i}$ & $l_4 = l_3$ & $x_{2i} = 0$ &
            $P_{i+2} \in P_{i-1} P_{i+1}$ & $m_1 = m_2$ & $x_{2i+1} = 0$ \\ 
            $P_{i+2} \in P_{i+1} P_{i-2}$ & $l_4 = l_1$ & $x_{2i} = 1$ &
            $P_{i+2} \in P_{i-1} P_{i-2}$ & $m_1 = m_4$ & $x_{2i+1} = 1$ \\
            $P_{i+2} \in P_{i+1} P_{i-1}$ & $l_4 = l_2$ & $x_{2i} = \infty$ &
            $P_{i+2} \in P_{i-1} P_{i}$ & $m_1 = m_3$ & $x_{2i+1} = \infty$ 
        \end{tabular}
    \end{center}
    which is precisely the relationship described in the proposition.
\end{proof}

\begin{remark} \label{rmk:P2 position and corner invariants}
    Proposition \ref{prop:P2 position and corner invariants} also gives us a way to determine the position of $P_{i+2}$ when neither $x_{2i}$ nor $x_{2i+1}$ takes value in $0,1,\infty$. Suppose the four points $P_{i-2}, P_{i-1}, P_i, P_{i+1}$ are in general position. For $i, j, k \in \{1,2,3\}$ distinct, we define $U_{i,j}$ to be the connected component of $\RR\PP^2 - (l_i \cup l_j)$ that does not intersect $l_k$. For $i,j,k \in \{2,3,4\}$ distinct, we define $V_{i,j}$ to be the connected component of $\RR\PP^2 - (m_i \cup m_j)$ that does not intersect $m_k$. See Figure \ref{fig:corner_inv_partition} for a visualization of the $U_{i,j}$'s and $V_{i,j}$'s using the point configurations given in Figure \ref{fig:geometry_of_corner_inv}. By Proposition \ref{prop:P2 position and corner invariants} and continuity of $\chi$, we have the following:
    \begin{table}[h!]
        \centering
        \begin{tabular}{c|c||c|c}
            Configuration & Coordinates & 
            Configuration & Coordinates \\
            \hline 
            $P_{i+2} \in U_{2,3}$ & $x_{2i} = I$ &
            $P_{i+2} \in V_{2,3}$ & $x_{2i+1} = I$ \\
            $P_{i+2} \in U_{1,3}$ & $x_{2i} = J$ &
            $P_{i+2} \in V_{2,4}$ & $x_{2i+1} = J$ \\
            $P_{i+2} \in U_{1,2}$ & $x_{2i} = K$ &
            $P_{i+2} \in V_{3,4}$ & $x_{2i+1} = K $ 
        \end{tabular}
    \end{table}
\end{remark}

\begin{corollary} \label{cor:grids are 3 nice}
    Given $[P] \in \cP_n$ with corner invariants $x_j = x_j(P)$, if $x_j \not\in \{0,1,\infty\}$ for all $j$, then $P$ is $3$-nice. Moreover, every four consecutive points of $P$ are in general position.
\end{corollary}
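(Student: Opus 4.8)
The plan is to reduce every required non-collinearity to the contrapositive of Proposition~\ref{prop:P2 position and corner invariants}, applied with an appropriate shift of the index. The twisted-polygon axiom already guarantees that every three consecutive vertices are in general position, so I first record what remains to be shown. For $P$ to be $3$-nice I must exclude, for every $i$, collinearity of the triples $\{P_i,P_{i+1},P_{i+3}\}$, $\{P_i,P_{i+1},P_{i+4}\}$, $\{P_i,P_{i+3},P_{i+4}\}$, $\{P_{i+1},P_{i+3},P_{i+4}\}$; and for the statement that every four consecutive vertices are in general position I must additionally exclude collinearity of $\{P_i,P_{i+1},P_{i+3}\}$ and $\{P_i,P_{i+2},P_{i+3}\}$. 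Thus the whole corollary reduces to the claim: no triple $\{P_a,P_b,P_c\}$ with $a<b<c\le a+4$ and index gaps $(b-a,\,c-b)\in\{(1,2),(2,1),(1,3),(3,1)\}$ is collinear.

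Next I would build a dictionary between these triples and corner invariants. Given such a triple, let $m$ be the index for which $P_{m+2}$ is its largest-indexed vertex, and apply Proposition~\ref{prop:P2 position and corner invariants} with the frame $(P_{m-2},P_{m-1},P_m,P_{m+1})$. Collinearity of the triple then says exactly that the mobile point $P_{m+2}$ lies on one of the six lines through $P_{m+1}$ or through $P_{m-1}$ that occur in the proof of that proposition, hence that $x_{2m}$ or $x_{2m+1}$ equals one of $0,1,\infty$. Running through the cases gives, for instance, that $\{P_a,P_{a+1},P_{a+3}\}$ is collinear iff $x_{2a+3}=\infty$, that $\{P_a,P_{a+2},P_{a+3}\}$ is collinear iff $x_{2a+2}=\infty$, that $\{P_a,P_{a+3},P_{a+4}\}$ is collinear iff $x_{2a+4}=1$, that $\{P_a,P_{a+1},P_{a+4}\}$ is collinear iff $x_{2a+5}=1$, and that $\{P_{a+1},P_{a+3},P_{a+4}\}$ is collinear iff $x_{2a+4}=\infty$. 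Since no corner invariant lies in $\{0,1,\infty\}$ by hypothesis, none of these collinearities can occur, which is precisely the assertion of the corollary.

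The only genuine work is the second step: matching each of the forbidden triples to the correct line in Proposition~\ref{prop:P2 position and corner invariants} and keeping the shifted subscripts straight. Note that Proposition~\ref{prop:P2 position and corner invariants} is stated for every index without a general-position hypothesis on the four chosen points, so it applies directly and is used here only in its contrapositive form; in particular no preliminary non-degeneracy of the auxiliary frames needs to be checked. I expect this index bookkeeping, rather than any real geometric content, to be the main obstacle.
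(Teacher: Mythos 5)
Your proposal is correct and follows essentially the same route as the paper: the paper's proof is likewise a table reducing each forbidden collinearity among $P_{i-2},\ldots,P_{i+2}$ to a corner invariant taking a value in $\{0,1,\infty\}$ via the contrapositive of Proposition~\ref{prop:P2 position and corner invariants}. Your dictionary entries agree with the paper's (up to index shifts, and up to using the equivalent condition $x_{2a+4}=\infty$ in place of $x_{2a+5}=0$ for the triple $\{P_{a+1},P_{a+3},P_{a+4}\}$).
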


\begin{proof}
    Using Proposition \ref{prop:P2 position and corner invariants} we may check that 
    \begin{table}[h!]
        \centering
        \begin{tabular}{c|c||c|c}
            Collinearity & Coordinates & Collinearity & Coordinates \\
            \hline
            $P_{i-2}$, $P_{i-1}$, $P_{i+1}$ & $x_{2i-1} = \infty$ & 
            $P_{i-1}$, $P_{i}$, $P_{i+2}$ & $x_{2i+1} = \infty$ \\
            $P_{i-2}$, $P_{i-1}$, $P_{i+2}$ & $x_{2i+1} = 1$ & 
            $P_{i-1}$, $P_{i+1}$, $P_{i+2}$ & $x_{2i+1} = 0$ \\
            $P_{i-2}$, $P_{i+1}$, $P_{i+2}$ & $x_{2i} = 1$ &
            $P_{i}$, $P_{i+1}$, $P_{i+2}$ & $x_{2i} = 0$ \\
            $P_{i-1}$, $P_{i}$, $P_{i+1}$ & $x_{2i-2} = 0$ &
            & \\
        \end{tabular}
    \end{table}

    All seven cases contradict the assumption in the corollary. Therefore, the four points $P_{i-2}$, $P_{i-1}$, $P_{i+1}$, $P_{i+2}$ are in general position, and the four consecutive points $P_{i-1}$, $P_i$, $P_{i+1}$, $P_{i+2}$ are in general position for all $i \in \ZZ$. This shows $P$ is 3-nice, and every four consecutive points of $P$ are in general position.
\end{proof}

\begin{figure}[ht]
    \centering
    \footnotesize
    
    \def\svgwidth{0.8\columnwidth}
    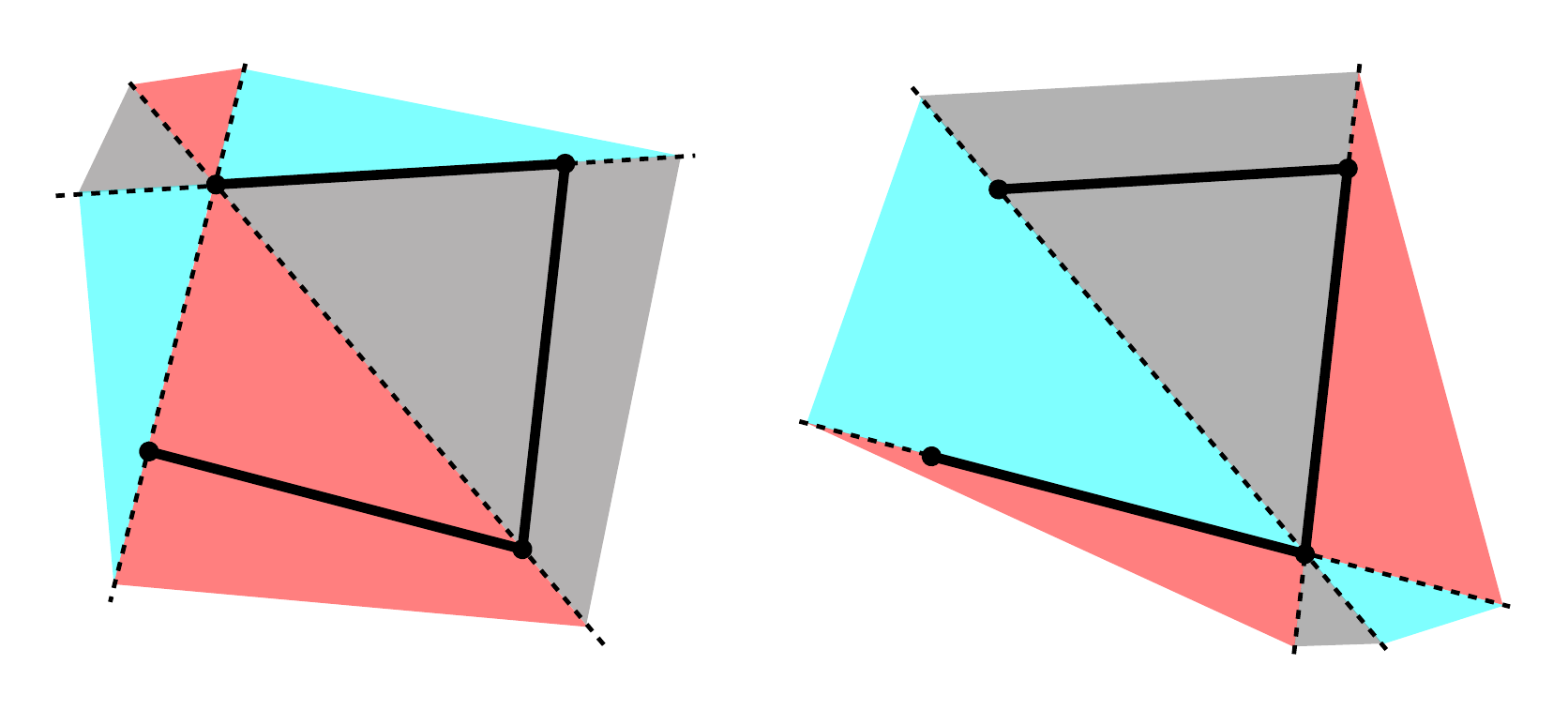

    \caption{The connected components $U_{i,j}$'s and $V_{i,j}$'s in Remark \ref{rmk:P2 position and corner invariants}. The corner invariants value in $I$ if $P_{i+2}$ lies in the black-shaded region, $J$ if $P_{i+2}$ lies in the red-shaded region, and $K$ if $P_{i+2}$ lies in the cyan-shaded region.}
    \label{fig:corner_inv_partition}
\end{figure}

Our goal of this section is to prove the following correspondence theorem:

\begin{theorem} \label{thm:tic-tac-toe and 3-spirals}
    For all $n \geq 2$, $\cS_{3,n}^\alpha = S_n(J, I)$, $\cS_{3,n}^\beta = S_n(K,J)$. 
\end{theorem}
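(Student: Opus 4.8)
The plan is to reduce Theorem~\ref{thm:tic-tac-toe and 3-spirals} to a purely planar dictionary between the corner invariants and the local vertex configurations, and then run that dictionary in both directions. Throughout one uses that, by Corollary~\ref{cor:grids are 3 nice}, $S_n(I,J)$ and $S_n(K,J)$ lie in $\cP_{3,n}$ (so corner invariants are a legitimate coordinate there), and that the corner invariants of $[P]$ are projective invariants, $2n$-periodic in the index. I would prove $\cS_{3,n}^\alpha=S_n(I,J)$ in detail and then observe that $\cS_{3,n}^\beta=S_n(K,J)$ is the identical argument with one reference line swapped: the swap that, in the pencil at $P_{i+1}$, carries the region labelled $I$ to the one labelled $K$ is exactly the swap that turns the type-$\alpha$ containment $P_{i+3}\in\Int(P_i,P_{i+1},P_{i+4})$ into the type-$\beta$ containment $P_{i+4}\in\Int(P_i,P_{i+1},P_{i+3})$ (Proposition~\ref{prop:tic tac toe symmetry} can also be used to cross-check the bookkeeping).

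The heart is the following dictionary. Fix a representative $P$ with $P_{i-2},P_{i-1},P_i,P_{i+1}$ in general position. As in the proof of Proposition~\ref{prop:P2 position and corner invariants}, $x_{2i}$ is the inverse cross ratio of the line $P_{i+1}P_{i+2}$ against $P_{i+1}P_{i-2},P_{i+1}P_{i-1},P_{i+1}P_i$ in the pencil through $P_{i+1}$, and $x_{2i+1}$ is the inverse cross ratio of $P_{i-1}P_{i+2}$ against $P_{i-1}P_{i+1},P_{i-1}P_i,P_{i-1}P_{i-2}$ in the pencil through $P_{i-1}$; hence by Remark~\ref{rmk:P2 position and corner invariants}, $(x_{2i},x_{2i+1})\in I\times J$ is precisely the statement $P_{i+2}\in U_{2,3}\cap V_{2,4}$. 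The key claim I would establish is that, \emph{when the frame $P_{i-2},P_{i-1},P_i,P_{i+1}$ sits in the configuration forced by the $3$-spiral conditions} — namely $\cO(P_{i-2},P_{i-1},P_i)>0$, $\cO(P_{i-1},P_i,P_{i+1})>0$, $\cO(P_{i-2},P_{i-1},P_{i+1})>0$, each of which follows from the type-$\alpha$ clauses at nearby indices via Proposition~\ref{prop:orientation lemma} — one has
\[
P_{i+2}\in U_{2,3}\cap V_{2,4}\iff \cO(P_i,P_{i+1},P_{i+2})>0 \ \text{ and } \ P_{i+1}\in\Int(P_{i-2},P_{i-1},P_{i+2}).
\]
This is a finite check in the plane: each of $U_{2,3}$ and $V_{2,4}$ is a sector bounded by two of the relevant lines, and Proposition~\ref{prop:orientation lemma} converts membership in these sectors into the displayed orientation/containment statements and back. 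Re-indexing $i\mapsto i-2$ turns the right-hand side into exactly the type-$\alpha$ clauses $\cO(P_j,P_{j+1},P_{j+2})>0$ and $P_{j+3}\in\Int(P_j,P_{j+1},P_{j+4})$ (the remaining clause $\cO(P_j,P_{j+1},P_{j+4})>0$ then comes for free from Proposition~\ref{prop:orientation lemma}). So, \emph{modulo the frame configuration}, ``$(x_{2i},x_{2i+1})\in I\times J$ for all $i$'' and ``the type-$\alpha$ clauses hold for all $i$'' say the same thing.

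Given the dictionary, $\cS_{3,n}^\alpha\subseteq S_n(I,J)$ is immediate: for $[P]\in\cS_{3,n}^\alpha$ and a fixed index $i$, pick an $N$-representative with $N$ small (say $N\le i-3$); its type-$\alpha$ clauses at indices $j\ge N$ supply both the frame configuration around $i$ and the right-hand side of the dictionary, giving $P_{i+2}\in U_{2,3}\cap V_{2,4}$, i.e.\ $(x_{2i},x_{2i+1})\in I\times J$; by $2n$-periodicity this holds for every $i$, so $[P]\in S_n(I,J)$. For $S_n(I,J)\subseteq\cS_{3,n}^\alpha$, fix $N$; the set $\{P_j:j\ge N\}$ is countable, so some line of $\RR\PP^2$ misses all of it, giving a representative $P$ with $P_j\in\AA^2$ for all $j\ge N$, and after composing with an orientation-reversing element of $\Aff_2(\RR)$ if needed we may normalize the sign of the base orientation at index $N$. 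One then inducts on $j\ge N$: at each step the already-known configuration of $P_{j-2},P_{j-1},P_j,P_{j+1}$, together with the projectively-invariant membership $(x_{2j},x_{2j+1})\in I\times J$, feeds into the dictionary to yield $\cO(P_j,P_{j+1},P_{j+2})>0$ and $P_{j+1}\in\Int(P_{j-2},P_{j-1},P_{j+2})$, which both continues the induction and, after the shift $j\mapsto j-2$, produces the type-$\alpha$ clauses. Thus $P$ is an $N$-representative of type $\alpha$; as $N$ was arbitrary, $[P]\in\cS_{3,n}^\alpha$.

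The main obstacle is the dictionary step together with the legitimacy of the induction's base case. One must locate $U_{2,3}\cap V_{2,4}$ (and $U_{1,2}\cap V_{2,4}$ in the $\beta$ case) precisely enough — tracking which \emph{rays}, not merely which lines, bound each sector, so that the orientation signs come out with the correct sense — and one must check that the per-index region conditions genuinely globalize: that the single overall sign is the only obstruction to a consistent assignment of affine orientations along the tail, so that one reflection normalizes every index at once and the base case of the induction is justified (here the $2n$-periodicity and Corollary~\ref{cor:grids are 3 nice}, which rules out degenerate collinearities, do the work). Everything else — the periodicity and projective-invariance reductions and the repeated appeals to Propositions~\ref{prop:orientation lemma} and~\ref{prop:P2 position and corner invariants} and Remark~\ref{rmk:P2 position and corner invariants} — is routine once the plane geometry of these regions is pinned down.
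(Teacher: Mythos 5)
Your overall architecture (a per-index ``dictionary'' between $(x_{2i},x_{2i+1})\in I\times J$ and the local vertex configuration, applied once for the forward inclusion and inductively for the reverse one) matches the paper's, but the dictionary you propose to prove is false, and it is the entire content of the theorem. You claim that, under the frame conditions, $P_{i+2}\in U_{2,3}\cap V_{2,4}$ is equivalent to $\cO(P_i,P_{i+1},P_{i+2})>0$ together with $P_{i+1}\in\Int(P_{i-2},P_{i-1},P_{i+2})$. The two sides are cut out by different triples of lines: $U_{2,3}\cap V_{2,4}$ is bounded by $P_{i+1}P_{i-1}$, $P_{i+1}P_i$ and $P_{i-1}P_{i-2}$, whereas your right-hand side is bounded by $P_iP_{i+1}$, $P_{i-1}P_{i+1}$ and $P_{i-2}P_{i+1}$; the line $m_4=P_{i-1}P_{i-2}$ never enters your conditions, and the line $l_1=P_{i-2}P_{i+1}$ should not enter. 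Concretely, take $P_{i-2}=(0,0)$, $P_{i-1}=(2,0)$, $P_i=(1,2)$, $P_{i+1}=(6/5,4/5)$, which satisfies every frame condition you list. Then $P_{i+2}=(13/10,1/2)$ gives $(x_{2i},x_{2i+1})=(-3/8,\,4/9)\in I\times J$, yet $P_{i+1}\notin\Int(P_{i-2},P_{i-1},P_{i+2})$ (that triangle lies entirely below the height of $P_{i+1}$); conversely $P_{i+2}=(37/30,19/15)$ satisfies both of your right-hand conditions yet has $(x_{2i},x_{2i+1})=(1/3,\,-15/4)\in J\times I$. So both implications fail, and the ``finite check in the plane'' you defer to cannot succeed.

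The missing ingredient is the paper's Lemma \ref{lem:S3 alpha in S(I,J)}: the type-$\alpha$ clauses at three \emph{consecutive} indices force the much stronger containment $P_{i+2}\in\Int(P_{i-1},P_i,P_{i+1})$, and only after intersecting this bounded triangle with the wedge $\{X:P_{i+1}\in\Int(P_{i-2},P_{i-1},X)\}$ does one obtain a region (the paper's $\Int(O,P_i,P_{i+1})$ with $O=P_{i-2}P_{i+1}\cap P_{i-1}P_i$) pinned down tightly enough to read off the corner invariants; your right-hand side omits this constraint entirely, which is exactly why unbounded pieces such as the ray through $(13/10,1/2)$ slip into one side but not the other. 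Relatedly, your remark that the clause $\cO(P_j,P_{j+1},P_{j+4})>0$ ``comes for free'' from Proposition \ref{prop:orientation lemma} requires knowing $(P_j,P_{j+1},P_{j+3})$ is positive, which does not follow from the two clauses you retain without this same lemma. Finally, be warned that the bookkeeping here is delicate: in the frame above, the region $\Int(O,P_i,P_{i+1})$ is seen from $P_{i+1}$ in the sector between $l_1$ and $l_3$ and from $P_{i-1}$ in the sector between $m_2$ and $m_3$, i.e.\ it lies in $U_{1,3}\cap V_{2,3}$, so its points have $(x_{2i},x_{2i+1})\in J\times I$ rather than $I\times J$; any correct write-up must therefore track the bounding rays of each sector with explicit coordinates rather than by inspection of a figure, and must reconcile the even/odd labelling of the corner invariants with the orientation conventions before the identification with a particular tic-tac-toe square can be asserted.
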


This theorem immediately produces the following important corollary.

\begin{corollary} \label{cor:tic-tac-toe and 3-spirals}
    For all $n \geq 2$, the four cells $S_n(I,J)$, $S_n(K,J)$, $S_n(J,I)$, $S_n(J,K)$ are both forward and backward invariant under $T_3$.
\end{corollary}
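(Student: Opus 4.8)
The plan is to reduce the corollary entirely to the two invariance theorems already in hand. For the squares $S_n(I,J)$ and $S_n(K,J)$ the claim is immediate: Theorem~\ref{thm:tic-tac-toe and 3-spirals} identifies $S_n(I,J) = \cS_{3,n}^\alpha$ and $S_n(K,J) = \cS_{3,n}^\beta$, while Theorem~\ref{thm:spiral polygon invariance} with $k = 3$ gives $T_3(\cS_{3,n}^\alpha) = \cS_{3,n}^\alpha$ and $T_3(\cS_{3,n}^\beta) = \cS_{3,n}^\beta$. Composing these equalities yields $T_3(S_n(I,J)) = S_n(I,J)$ and $T_3(S_n(K,J)) = S_n(K,J)$. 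Since the grids lie in $\cP_{3,n}$ by Corollary~\ref{cor:grids are 3 nice} and $T_3$ is a bijection on $\cP_{3,n}$, applying $T_3^{-1}$ to both sides also gives $T_3^{-1}(S_n(I,J)) = S_n(I,J)$ and $T_3^{-1}(S_n(K,J)) = S_n(K,J)$.

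For the remaining squares $S_n(J,I)$ and $S_n(J,K)$ I would bring in the index-reversal symmetry of Proposition~\ref{prop:tic tac toe symmetry}. Let $\rho$ be the involution of $\cP_{3,n}$ induced by $P \mapsto P \circ \iota$, where $\iota$ is the map of Proposition~\ref{prop:tic tac toe symmetry}; it descends to the moduli space because reversing the labeling sends a twisted polygon with monodromy $M$ to one with monodromy $M^{-1}$, and it clearly preserves both the general-position condition on consecutive triples and the $3$-nice condition. The one computation required is to see how $T_3$ transforms under $\rho$. Starting from the labeling convention $P'_i = P_i P_{i+3} \cap P_{i+1} P_{i+4}$ of Equation~\ref{eqn:delta k,1 formula} and substituting $i \mapsto -i$, one finds that $T_3(P \circ \iota)$ agrees with $T_3(P)$ after an index reversal and a shift of the labels (the shift being by $k+1 = 4$); the same computation, applied to $T_3^{-1}$, gives the analogous relation. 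In short, on $\cP_{3,n}$ the map $T_3$ commutes with $\rho$ up to a label shift.

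Granting this, the two remaining cases close formally, using two properties recorded in Proposition~\ref{prop:tic tac toe symmetry}: each grid $S_n(\cdot,\cdot)$ is invariant under all label shifts $\sigma_i$, and $\rho$ interchanges $S_n(I,J)$ with $S_n(J,I)$ and $S_n(K,J)$ with $S_n(J,K)$. Given $[Q] \in S_n(J,I)$, Proposition~\ref{prop:tic tac toe symmetry} gives $\rho([Q]) \in S_n(I,J)$, hence $T_3(\rho([Q])) \in S_n(I,J)$ by the first paragraph, hence $\rho\big(T_3(\rho([Q]))\big) \in S_n(J,I)$; but by the second paragraph this last point coincides with $T_3([Q])$ up to a label shift, so $T_3([Q]) \in S_n(J,I)$. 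This proves $T_3(S_n(J,I)) \subset S_n(J,I)$; the same argument with $T_3^{-1}$ in place of $T_3$ gives $T_3^{-1}(S_n(J,I)) \subset S_n(J,I)$, and bijectivity of $T_3$ on $\cP_{3,n}$ then yields $T_3(S_n(J,I)) = S_n(J,I)$. Replacing $S_n(I,J)$ by $S_n(K,J)$ everywhere handles $S_n(J,K)$.

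Beyond Theorems~\ref{thm:spiral polygon invariance} and~\ref{thm:tic-tac-toe and 3-spirals}, which carry all the real content, I do not expect a genuine obstacle. The only point needing care is the index bookkeeping in the reversal identity for $T_3$ --- verifying that the output labels of $T_3$ transform as claimed under $i \mapsto -i$, and noting that the exact value of the resulting shift is immaterial once the shift-invariance of the grids (Proposition~\ref{prop:tic tac toe symmetry}) is invoked --- together with the routine verification that $\rho$ is well defined on $\cP_{3,n}$ and acts on the tic-tac-toe classification exactly as Proposition~\ref{prop:tic tac toe symmetry} states.
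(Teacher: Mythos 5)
Your proposal is correct and follows essentially the same route as the paper: the first two squares are handled by combining Theorem~\ref{thm:tic-tac-toe and 3-spirals} with Theorem~\ref{thm:spiral polygon invariance}, and the squares $S_n(J,I)$, $S_n(J,K)$ are reduced to them via the reversal identity $T_3([P\circ\iota]) = [T_3(P)\circ\iota\circ\sigma_4]$ together with the shift- and reversal-symmetries of Proposition~\ref{prop:tic tac toe symmetry}. Your identification of the label shift as $k+1=4$ matches the paper's $\sigma_4$ exactly.
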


\begin{proof}
    The case $S_n(J, I)$ and $S_n(K, J)$ follows immediately from Theorem \ref{thm:spiral polygon invariance} and \ref{thm:tic-tac-toe and 3-spirals}. We will prove the case $S_n(I, J)$. The case $S_n(J,K)$ is completely analogous, so we will omit.
    
    Fix $[P] \in S_n(I, J)$. Recall the maps $\sigma_i$ and $\iota$ from Proposition \ref{prop:tic tac toe symmetry}. Equation \eqref{eqn:delta k,1 formula} implies $T_3(P \circ \iota) = T_3(P) \circ \iota \circ \sigma_4$. Then, Proposition \ref{prop:tic tac toe symmetry} implies $[P \circ \iota] \in S_n(J, I)$, so $[T_3(P \circ \iota)] \in S_n(J, I)$. Finally, observe that 
    \begin{equation*}
        T_3(P)
        = (T_3(P) \circ \iota \circ \sigma_4) \circ (\sigma_{-4} \circ \iota)
        = T_3(P \circ \iota) \circ (\sigma_{-4} \circ \iota).
    \end{equation*}
    It follows that $[T_3(P)] \in S_n(I, J)$. We omit the proof of $[T_3^{-1}(P)] \in S_n(I, J)$. 
\end{proof}

\subsection{The Correspondence of \texorpdfstring{$\cS_{3,n}^\alpha$}{S3n alpha} and \texorpdfstring{$S_n(J,I)$}{Sn(J,I)}} \label{subsec:correspondence alpha and IJ}

Here we show that $\cS_{3,n}^\alpha$ is equivalent to $S_n(J, I)$. We will first show that the corner invariants of a $0$-representative $P$ of some $[P] \in \cS_{3,n}^\alpha$ satisfies $S_n(J, I)$. Then, we will show that we can find type-$\alpha$ $N$-representatives for all $N \in \ZZ$ given any $[P] \in S_n(J, I)$. 

\begin{lemma} \label{lem:S3 alpha in S(J,I)}
    If $P$ is an $N$-representative of $[P] \in \cS_{3,n}^\alpha$, then $P_{i+2} \in \Int(P_{i-1}, P_{i}, P_{i+1})$ for all $i > N+1$. 
\end{lemma}

\begin{proof}
    Since $(P_{i-1}, P_{i}, P_{i+1})$ is positive, we may normalize with $\Aff_2^+(\RR)$ so that $P_{i-1} = (-1,0)$, $P_{i} = (0,0)$, and $P_{i+1} = (0,1)$. Let $P_{i+2} = (x,y)$. It suffices to show that $x < 0$, $y > 0$, and $y - x < 1$. We get $x < 0$ from positivity of $(P_{i}, P_{i+1}, P_{i+2})$, and we get $y > 0$ from positivity of $(P_{i-1}, P_{i}, P_{i+2})$. Finally, since $(P_{i-2}, P_{i-1}, P_{i+2})$ is positive and $P_{i+1} \in \Int (P_{i-2}, P_{i-1}, P_{i+2})$, Proposition \ref{prop:orientation lemma} implies $(P_{i+1}, P_{i-1}, P_{i+2})$ is positive, which gives us $y - x < 1$ as desired. 
\end{proof}

\begin{proposition} \label{prop:S3 alpha in S(J,I)}
    For all $n \geq 2$, $\cS_{3,n}^\alpha \subset S_n(J, I)$.
\end{proposition}

\begin{proof}
    Fix $i \in \ZZ$. Let $P$ be an $(i-3)$-representative of $[P] \in \cS_{3,n}^\alpha$ with corner invariants $x_j = x_j(P)$. 
    Normalize with $\Aff_2^+(\RR)$ so that $P_{i-1} = (-1,0)$, $P_{i} = (0,0)$, and $P_{i+1} = (0,1)$. Let $s_{a,b}$ denote the slope of the line $P_{i+a} P_{i+b}$. See Figure \ref{fig:alpha JI} for the configuration of points. 

    We want to show that $(x_{2i}, x_{2i+1}) \in I \times J$. 
    By Lemma \ref{lem:S3 alpha in S(J,I)}, $P_{i+1} \in \Int (P_{i-2}, P_{i-1}, P_i)$. This implies $s_{1,-2} > s_{-1,-2} > 1$. On the other hand, since $P_{i+1} \in \Int(P_{i-2}, P_{i-1}, P_{i+2})$, we have $s_{1,2} > s_{1,-2} > 1$, and $s_{-1,2} \in (0,1)$. This gives us
    \begin{equation*}
        \begin{aligned}
            x_{2i}
            &= \frac{(s_{1,-2} - s_{1,-1}) (s_{1,0} - s_{1,2})}{(s_{1,-2} - s_{1,0}) (s_{1,-1} - s_{1,2})}
            = \frac{s_{1,-2} - 1}{s_{1,2} - 1} \in J  \hspace{20pt} \text{and} \\
            x_{2i+1}
            &= \frac{(s_{-1,2} - s_{-1,1}) (s_{-1,0} - s_{-1,-2})}{(s_{-1,2} - s_{-1,0}) (s_{-1,1} - s_{-1,-2})}
            = \frac{s_{-1,-2} (s_{-1,2} - 1)}{s_{-1,2} (s_{-1,-2} - 1)} \in I.
        \end{aligned}
    \end{equation*}
    This concludes the proof. 
\end{proof}

\begin{figure}[ht]
    \centering
    \footnotesize
    
    \def\svgwidth{0.35\columnwidth}
\begingroup%
  \makeatletter%
  \providecommand\color[2][]{%
    \errmessage{(Inkscape) Color is used for the text in Inkscape, but the package 'color.sty' is not loaded}%
    \renewcommand\color[2][]{}%
  }%
  \providecommand\transparent[1]{%
    \errmessage{(Inkscape) Transparency is used (non-zero) for the text in Inkscape, but the package 'transparent.sty' is not loaded}%
    \renewcommand\transparent[1]{}%
  }%
  \providecommand\rotatebox[2]{#2}%
  \newcommand*\fsize{\dimexpr\f@size pt\relax}%
  \newcommand*\lineheight[1]{\fontsize{\fsize}{#1\fsize}\selectfont}%
  \ifx\svgwidth\undefined%
    \setlength{\unitlength}{186.10863633bp}%
    \ifx\svgscale\undefined%
      \relax%
    \else%
      \setlength{\unitlength}{\unitlength * \real{\svgscale}}%
    \fi%
  \else%
    \setlength{\unitlength}{\svgwidth}%
  \fi%
  \global\let\svgwidth\undefined%
  \global\let\svgscale\undefined%
  \makeatother%
  \begin{picture}(1,0.98415635)%
    \lineheight{1}%
    \setlength\tabcolsep{0pt}%
    \put(0,0){\includegraphics[width=\unitlength,page=1]{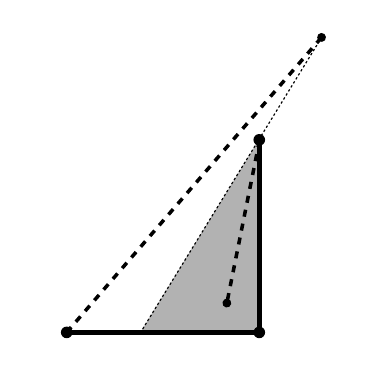}}%
    \put(0.79082774,0.92775804){\color[rgb]{0,0,0}\makebox(0,0)[lt]{\lineheight{1.25}\smash{\begin{tabular}[t]{l}$P_{i-2}$\end{tabular}}}}%
    \put(0.04360665,0.04178767){\color[rgb]{0,0,0}\makebox(0,0)[lt]{\lineheight{1.25}\smash{\begin{tabular}[t]{l}$P_{i-1}$\end{tabular}}}}%
    \put(0.72745879,0.0440676){\color[rgb]{0,0,0}\makebox(0,0)[lt]{\lineheight{1.25}\smash{\begin{tabular}[t]{l}$P_i$\end{tabular}}}}%
    \put(0.72183984,0.60733854){\color[rgb]{0,0,0}\makebox(0,0)[lt]{\lineheight{1.25}\smash{\begin{tabular}[t]{l}$P_{i+1}$\end{tabular}}}}%
    \put(0.45639161,0.21321632){\color[rgb]{0,0,0}\makebox(0,0)[lt]{\lineheight{1.25}\smash{\begin{tabular}[t]{l}$P_{i+2}$\end{tabular}}}}%
  \end{picture}%
\endgroup%

    \caption{Configuration of Proposition \ref{prop:S3 alpha in S(J,I)} and \ref{prop:S3 alpha equals in S(J,I)}.}
    \label{fig:alpha JI}
\end{figure}

\begin{proposition} \label{prop:S3 alpha equals in S(J,I)}
    For all $n \geq 2$, $\cS_{3,n}^\alpha = S_n(I,J)$.
\end{proposition}

\begin{proof}
    Proposition \ref{prop:S3 alpha in S(J,I)} implies we only need to show $\cS_{3,n}^\alpha \supset S_n(I,J)$. Given $[P] \in S_n(I,J)$, let $P$ be a representative that satisfies $P_{-1} = (1,4)$, $P_{0} = (-1,0)$, $P_{1} = (0,0)$, $P_{2} = (0,1)$. Say that $P$ satisfies condition $(*)_i$ if the three triangles $(P_{i-1}, P_{i}, P_{i+1})$, $(P_{i}, P_{i+1}, P_{i+2})$, $(P_{i-1}, P_{i}, P_{i+2})$ are all positive, $P_{i+2} \in \Int(P_{i-1}, P_i, P_{i+1})$, and $P_{i+2} \in \Int(P_{i-2}, P_{i-1}, P_{i+1})$. 

    We show that for all $i > 0$, if $P$ satisfies $(*)_{i-1}$, then $P$ satisfies $(*)_{i}$. Since $(P_{i-1}, P_i, P_{i+1})$ is positive, we can normalize with $\Aff_2^+(\RR)$ so that $P_{i-1} = (-1,0)$, $P_i = (0,0)$, and $P_{i+1} = (0,1)$. Let $s_{a,b}$ denote the slope of $P_{i+a} P_{i+b}$. Since $P_{i+1} \in \Int(P_{i-2}, P_{i-1}, P_{i})$, we know that $s_{1,-2} > s_{-1,-2} > 1$. Then, $x_{2i} \in J$ implies $0 < \frac{s_{1,-2} - 1}{s_{1,2} - 1} < 1$. This gives us $s_{1,2} > s_{1,-2} > 1$. On the other hand, $x_{2i+1} \in I$ implies $\frac{s_{-1,-2} (s_{-1,2} - 1)}{s_{-1,2} (s_{-1,-2} - 1)} < 0$. Since $s_{-1,-2} > 1$, this is equivalent to $1 - \frac{1}{s_{-1,2}} < 0$, which implies $s_{-1,2} \in (0,1)$. Thus, the two lines $P_{i-1} P_{i+2}$ and $P_{i+1} P_{i+2}$ must meet in the shaded triangle in Figure \ref{fig:alpha JI}, which implies $(P_{i}, P_{i+1}, P_{i+2})$, $(P_{i-1}, P_i, P_{i+2})$ are positive, $P_{i+2} \in \Int(P_{i-1}, P_{i}, P_{i+1})$, and $P_{i+2} \in \Int(P_{i-2}, P_{i-1}, P_{i+1})$, so $P$ satisfies $(*)_i$. Finally, since $P$ clearly satisfies $(*)_0$, by induction $P$ satisfies $(*)_i$ for all $i \geq 0$, so $P$ is a type-$\alpha$ $0$-representative of a 3-spiral. We conclude that $[P] \in \cS_{3,n}^\alpha$. 
\end{proof}

\subsection{The Correspondence of \texorpdfstring{$\cS_{3,n}^\beta$}{S3n beta} and \texorpdfstring{$S_n(K,J)$}{Sn(K,J)}}

Here we show that $\cS_{3,n}^\beta$ is equivalent to $S_n(K,J)$. The ideas behind the proofs are essentially the same as the ones in $\S$\ref{subsec:correspondence alpha and IJ}. We will focus on explaining how to modify the details of the proofs in $\S$\ref{subsec:correspondence alpha and IJ} for type-$\beta$ 3-spirals and $S_n(K,J)$. 

\begin{lemma} \label{lem:S3 beta in S(K,J)}
    If $P$ is an $N$-representative of $[P] \in \cS_{3,n}^\beta$, then the quadrilateral joined by vertices $(P_i, P_{i+1}, P_{i+2}, P_{i+3})$ is convex for all $i > N$. 
\end{lemma}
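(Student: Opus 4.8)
The plan is to establish convexity of the quadrilateral by the standard "always turning the same way" criterion: an ordered quadruple $WXYZ$ in $\AA^2$ bounds a convex quadrilateral in the cyclic order $W,X,Y,Z$ precisely when the four orientations $\cO(W,X,Y)$, $\cO(X,Y,Z)$, $\cO(Y,Z,W)$, $\cO(Z,W,X)$ all have the same sign (and strict agreement of signs also rules out any degeneracy). I would state this as an elementary fact about planar quadrilaterals, analogous to the elementary facts collected in $\S$\ref{subsec:triangle orientation}. Hence it suffices to prove that $\cO(P_i,P_{i+1},P_{i+2})$, $\cO(P_{i+1},P_{i+2},P_{i+3})$, $\cO(P_{i+2},P_{i+3},P_i)$, and $\cO(P_{i+3},P_i,P_{i+1})$ are all positive for $i > N$. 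Everything will be read off from the positivity clauses in Definition \ref{def:spiral polygon} specialized to $k=3$, so no coordinate normalization is needed.

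Two of the four orientations are immediate. Since $i \geq N$, being an $N$-representative gives $\cO(P_i,P_{i+1},P_{i+2}) > 0$ directly, and applying the same clause at index $i+1$ (legitimate since $i+1 \geq N$) gives $\cO(P_{i+1},P_{i+2},P_{i+3}) > 0$. A third comes from the type-$\beta$ clause at index $i$: for $k=3$ that clause reads $\cO(P_i,P_{i+1},P_{i+3}) > 0$, and since $\cO$ is invariant under $3$-cycles this equals $\cO(P_{i+3},P_i,P_{i+1})$.

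The remaining orientation, $\cO(P_{i+2},P_{i+3},P_i) = \cO(P_i,P_{i+2},P_{i+3})$, is exactly where the hypothesis $i > N$ (rather than $i \geq N$) is needed. I would invoke the type-$\beta$ clause at index $i-1$, which is available because $i-1 \geq N$: it yields $\cO(P_{i-1},P_i,P_{i+2}) > 0$ together with $P_{i+3} \in \Int(P_{i-1},P_i,P_{i+2})$. Proposition \ref{prop:orientation lemma}, applied with $(V_1,V_2,V_3,W) = (P_{i-1},P_i,P_{i+2},P_{i+3})$, then upgrades this to $\cO(P_i,P_{i+2},P_{i+3}) > 0$. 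Combining the four positivity statements with the quadrilateral criterion completes the proof.

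I do not expect a genuine obstacle: the argument is essentially index bookkeeping of the kind already used in Lemma \ref{lem:S3 alpha in S(I,J)}. The two points that require care are (i) correctly extracting the $k=3$ specializations of the type-$\beta$ conditions and picking the right indices — one orientation needs the clause at $i$, the other needs it at $i-1$, which is precisely what forces the range $i>N$ — and (ii) phrasing the convexity-via-orientations criterion carefully enough that the stated conclusion is genuinely "convex quadrilateral" and not merely "four points in some configuration," mirroring the contrast with the type-$\alpha$ case, where instead $P_{i+3}$ lands \emph{inside} $\Int(P_i,P_{i+1},P_{i+2})$.
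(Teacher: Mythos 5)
Your proof is correct and follows essentially the same route as the paper: both arguments rest on the same four positivity facts (the locally-convex clause at $i$ and $i+1$, the type-$\beta$ clause at $i$ giving $(P_i,P_{i+1},P_{i+3})$ positive, and the clause at $i-1$ plus Proposition \ref{prop:orientation lemma} giving $(P_i,P_{i+2},P_{i+3})$ positive, which is exactly where $i>N$ is needed). The only difference is cosmetic: the paper normalizes by $\Aff_2^+(\RR)$ and reads off convexity from coordinate inequalities, while you invoke the equivalent same-sign-of-four-orientations criterion directly.
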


\begin{proof}
    Normalize with $\Aff_2^+(\RR)$ so that $P_i = (-1,0)$, $P_{i+1} = (0,0)$, $P_{i+2} = (0,1)$, and $P_{i+3} = (x,y)$. Positivity of $(P_{i+1}, P_{i+2}, P_{i+3})$ and $(P_{i}, P_{i+1}, P_{i+3})$ implies $x < 0$ and $y > 0$. Positivity of $(P_{i-1}, P_i, P_{i+2})$, $P_{i+3} \in \Int(P_{i-1}, P_i, P_{i+2})$, and Proposition \ref{prop:orientation lemma} shows $y - x > 1$. 
\end{proof}

\begin{proposition} \label{prop:S3 beta in S(K,J)}
    For all $n \geq 2$, $\cS_{3,n}^\beta \subset S_n(K,J)$. 
\end{proposition}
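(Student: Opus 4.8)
The plan is to follow the template of the proof of Proposition~\ref{prop:S3 alpha in S(I,J)}, using the convex-quadrilateral picture of the $\beta$-spirals furnished by Lemma~\ref{lem:S3 beta in S(K,J)} in place of the nested-triangle picture of the $\alpha$-spirals, and then reading off the cell $K\times J$ from Remark~\ref{rmk:P2 position and corner invariants}. Fix $[P]\in\cS_{3,n}^\beta$ and pass to a $(-3)$-representative $P$ of type $\beta$, which exists by Definition~\ref{def:spiral polygon}; let $(x_0,\dots,x_{2n-1})$ be its corner invariants. Since these are $2n$-periodic it is enough to prove $(x_{2i},x_{2i+1})\in K\times J$ for $0\le i\le n-1$, so fix such an $i$; every index used below then falls in the range where the hypotheses of Lemma~\ref{lem:S3 beta in S(K,J)} and of the type-$\beta$ clause of Definition~\ref{def:spiral polygon} apply.

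Next I would record the local configuration of $P_{i-2},P_{i-1},P_i,P_{i+1},P_{i+2}$. By Lemma~\ref{lem:S3 beta in S(K,J)} applied at index $i-2$, the quadrilateral $(P_{i-2},P_{i-1},P_i,P_{i+1})$ is convex, and since $(P_{i-1},P_i,P_{i+1})$ is positive ($P$ being an $N$-representative) it is positively (counterclockwise) oriented. By the type-$\beta$ clause of Definition~\ref{def:spiral polygon} at $j=i-2$ (namely $P_{j+k+1}\in\Int(P_j,P_{j+1},P_{j+k})$ with $k=3$), $(P_{i-2},P_{i-1},P_{i+1})$ is positive and $P_{i+2}\in\Int(P_{i-2},P_{i-1},P_{i+1})$. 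The key geometric claim is that for a positively oriented convex quadrilateral $(P_{i-2},P_{i-1},P_i,P_{i+1})$ the entire triangle $\Int(P_{i-2},P_{i-1},P_{i+1})$ lies in the region that Remark~\ref{rmk:P2 position and corner invariants} attaches to the cell $K\times J$; granting this, $P_{i+2}$ lands in that region, hence $(x_{2i},x_{2i+1})\in K\times J$, and since $i$ was arbitrary periodicity gives $[P]\in S_n(K,J)$.

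To prove the geometric claim I would use the lines $l_1=P_{i+1}P_{i-2}$, $l_2=P_{i+1}P_{i-1}$, $l_3=P_{i+1}P_i$ and $m_2=P_{i-1}P_{i+1}$, $m_3=P_{i-1}P_i$, $m_4=P_{i-1}P_{i-2}$ of Remark~\ref{rmk:P2 position and corner invariants} and check that $\Int(P_{i-2},P_{i-1},P_{i+1})\subset U_{1,2}\cap V_{2,4}$: the diagonal $P_{i-1}P_{i+1}$ of the quadrilateral splits it into $\Int(P_{i-2},P_{i-1},P_{i+1})$ and $\Int(P_{i-1},P_i,P_{i+1})$, the second having $P_i$ as a vertex, so the lines $l_3$ and $m_3$ (passing through the shared vertices $P_{i+1}$ and $P_{i-1}$ and through $P_i$) miss the interior of the first triangle, which forces that triangle into the component $U_{1,2}$ of $\RR\PP^2\setminus(l_1\cup l_2)$ disjoint from $l_3$ and into the component $V_{2,4}$ of $\RR\PP^2\setminus(m_2\cup m_4)$ disjoint from $m_3$. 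This ``lines miss the triangle'' step is the part I expect to require the most care: because the $\beta$-spirals wind in the opposite sense from the $\alpha$-spirals, $P_{i+2}$ sits on the other side of the ray $P_{i+1}P_{i-1}$ from where it would for an $\alpha$-spiral, so one must track the angular order of $l_1,l_2,l_3$ at $P_{i+1}$ and of $m_2,m_3,m_4$ at $P_{i-1}$ to be sure the triangle lands in $K\times J$ and not in an adjacent cell. Normalizing by $\Aff_2^+(\RR)$ exactly as in the proof of Lemma~\ref{lem:S3 beta in S(K,J)} --- so that $P_{i-2}=(-1,0)$, $P_{i-1}=(0,0)$, $P_i=(0,1)$, and $P_{i+1}=(p,q)$ with $p<0$, $q>0$, $q-p>1$ --- makes this bookkeeping explicit, and a figure analogous to Figure~\ref{fig:alpha IJ} would accompany the argument.
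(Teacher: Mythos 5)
Your proposal is correct and follows essentially the same route as the paper: pass to a $(-3)$-representative, invoke Lemma \ref{lem:S3 beta in S(K,J)} for convexity of the quadrilateral $(P_{i-2},P_{i-1},P_i,P_{i+1})$, use the type-$\beta$ condition to place $P_{i+2}$ in $\Int(P_{i-2},P_{i-1},P_{i+1})$, and read off $(x_{2i},x_{2i+1})\in K\times J$ from Remark \ref{rmk:P2 position and corner invariants}. Your added verification that $l_3$ and $m_3$ miss the triangle (hence that the triangle sits in $U_{1,2}\cap V_{2,4}$) is exactly the point the paper passes over tersely, and your treatment of it is sound.
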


\begin{proof}
    Let $P$ be a $(-3)$-representative of $[P] \in \cS_{3,n}^\beta$ with corner invariants $x_j = x_j(P)$. Lemma \ref{lem:S3 beta in S(K,J)} implies the quadrilateral $(P_{i-2}, P_{i-1}, P_i, P_{i+1})$ is convex. Next, since $P$ is a type-$\beta$ $(-3)$-representative, $P_i \in \Int(P_{i-2}, P_{i-1}, P_{i+1})$ for all $i \geq 0$ (See Figure \ref{fig:beta KJ}). Referring back to Remark \ref{rmk:P2 position and corner invariants}, convexity of $(P_{i-2}, P_{i-1}, P_i, P_{i+1})$ implies $P_{i} P_{i+1}$ doesn't go through $P_{i-2}, P_{i-1}, P_{i+1}$, so $(x_{2i}, x_{2i+1}) \in K \times J$ whenever $P_{i+2} \in \Int(P_{i-2}, P_{i-1}, P_{i+1})$. 
\end{proof}

\begin{figure}[ht]
    \centering
    \footnotesize
    
    \def\svgwidth{0.3\columnwidth}
\begingroup%
  \makeatletter%
  \providecommand\color[2][]{%
    \errmessage{(Inkscape) Color is used for the text in Inkscape, but the package 'color.sty' is not loaded}%
    \renewcommand\color[2][]{}%
  }%
  \providecommand\transparent[1]{%
    \errmessage{(Inkscape) Transparency is used (non-zero) for the text in Inkscape, but the package 'transparent.sty' is not loaded}%
    \renewcommand\transparent[1]{}%
  }%
  \providecommand\rotatebox[2]{#2}%
  \newcommand*\fsize{\dimexpr\f@size pt\relax}%
  \newcommand*\lineheight[1]{\fontsize{\fsize}{#1\fsize}\selectfont}%
  \ifx\svgwidth\undefined%
    \setlength{\unitlength}{280.31079871bp}%
    \ifx\svgscale\undefined%
      \relax%
    \else%
      \setlength{\unitlength}{\unitlength * \real{\svgscale}}%
    \fi%
  \else%
    \setlength{\unitlength}{\svgwidth}%
  \fi%
  \global\let\svgwidth\undefined%
  \global\let\svgscale\undefined%
  \makeatother%
  \begin{picture}(1,0.86614102)%
    \lineheight{1}%
    \setlength\tabcolsep{0pt}%
    \put(0,0){\includegraphics[width=\unitlength,page=1]{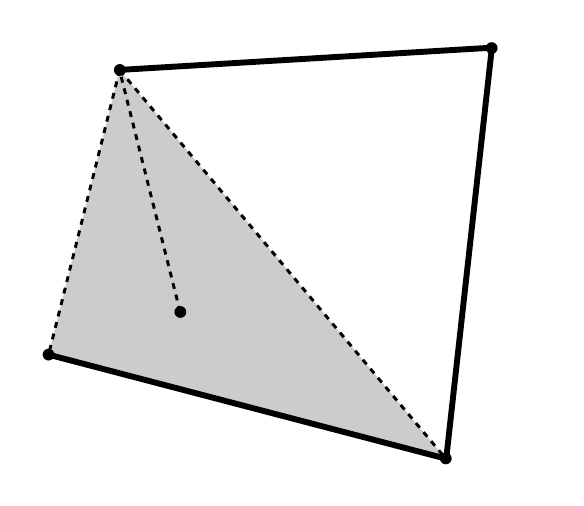}}%
    \put(0.01167914,0.19421597){\color[rgb]{0,0,0}\makebox(0,0)[lt]{\lineheight{1.25}\smash{\begin{tabular}[t]{l}$P_{i-2}$\end{tabular}}}}%
    \put(0.72523349,0.01675112){\color[rgb]{0,0,0}\makebox(0,0)[lt]{\lineheight{1.25}\smash{\begin{tabular}[t]{l}$P_{i-1}$\end{tabular}}}}%
    \put(0.85126968,0.8169463){\color[rgb]{0,0,0}\makebox(0,0)[lt]{\lineheight{1.25}\smash{\begin{tabular}[t]{l}$P_i$\end{tabular}}}}%
    \put(0.14334738,0.79131343){\color[rgb]{0,0,0}\makebox(0,0)[lt]{\lineheight{1.25}\smash{\begin{tabular}[t]{l}$P_{i+1}$\end{tabular}}}}%
    \put(0.26805291,0.26001356){\color[rgb]{0,0,0}\makebox(0,0)[lt]{\lineheight{1.25}\smash{\begin{tabular}[t]{l}$P_{i+2}$\end{tabular}}}}%
  \end{picture}%
\endgroup%

    \caption{Configuration of Proposition \ref{prop:S3 beta in S(K,J)} and Lemma \ref{lem:S3 beta induction}.}
    \label{fig:beta KJ}
\end{figure}

\begin{lemma} \label{lem:S3 beta induction}
    Given a $3$-nice sequence $P: \ZZ \rightarrow \RR\PP^2$ and an integer $i \in \ZZ$, let $x_{2i} = x_{2i}(P)$ and $x_{2i+1} = x_{2i+1}(P)$ be the corner invariants of $P$. If the following conditions are true:
    \begin{itemize}
        \item $(P_{i-2}, P_{i-1}, P_i)$ and $(P_{i-1}, P_i, P_{i+1})$ are both positive;
        \item The quadrilateral $(P_{i-2}, P_{i-1}, P_i, P_{i+1})$ is convex; 
        \item $(x_{2i}, x_{2i+1}) \in K \times J$.
    \end{itemize}
    Then, the following hold:
    \begin{itemize}
        \item $P_{i+2} \in \Int(P_{i-2}, P_{i-1}, P_{i+1})$;
        \item The quadrilateral $(P_{i-1}, P_i, P_{i+1}, P_{i+2})$ is convex;
        \item $(P_{i}, P_{i+1}, P_{i+2})$ and $(P_{i-1}, P_{i}, P_{i+2})$ are both positive. 
    \end{itemize}
\end{lemma}

\begin{proof}
    Recall that from the proof of Proposition \ref{prop:S3 beta in S(K,J)}, we claimed that if the quadrilateral $(P_{i-2}, P_{i-1}, P_i, P_{i+1})$ is convex, then the line $P_i P_{i+1}$ doesn't go through $(P_{i-2}, P_{i-1}, P_{i+1})$. Since $(x_{2i}, x_{2i+1}) \in K \times J$, Remark \ref{rmk:P2 position and corner invariants} implies $P_{i+2} \in \Int(P_{i-2}, P_{i-1}, P_{i+1})$, in which case all conclusions of this lemma will hold. See Figure \ref{fig:beta KJ} for a visualization of the five points. 
\end{proof}

\begin{proposition} \label{prop:S3 beta equals S(J,K)}
    $\cS_{3,n}^\beta = S_n(K,J)$. 
\end{proposition}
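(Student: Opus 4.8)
The plan is to follow the template of Proposition \ref{prop:S3 alpha equals in S(I,J)}, replacing the type-$\alpha$ ingredients by their type-$\beta$ analogues. Proposition \ref{prop:S3 beta in S(K,J)} already gives $\cS_{3,n}^\beta \subset S_n(K,J)$, so it remains to prove the reverse inclusion: given $[P] \in S_n(K,J)$ and an arbitrary $N \in \ZZ$, produce an $N$-representative of type $\beta$ with $k = 3$. By Definition \ref{def:spiral polygon}, this amounts to exhibiting a representative $P$ such that, for every $i \ge N$, $P_i \in \AA^2$, the triples $(P_i, P_{i+1}, P_{i+2})$ and $(P_i, P_{i+1}, P_{i+3})$ are positive, and $P_{i+4} \in \Int(P_i, P_{i+1}, P_{i+3})$.

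First I would fix a convenient representative. Since every corner invariant of $[P]$ lies in $K \cup J$ and hence avoids $\{0,1,\infty\}$, Corollary \ref{cor:grids are 3 nice} guarantees that any four consecutive vertices of any representative are in general position; in particular $P$ is $3$-nice. I would therefore take any representative (for instance the one produced by the reconstruction formula of \cite{schwartz2008discretemonodromypentagramsmethod}) and act on it by the unique projective transformation from Theorem \ref{thm:proj transform} carrying $(P_N, P_{N+1}, P_{N+2}, P_{N+3})$ to the vertices of a fixed convex, counterclockwise quadrilateral, say $P_N = (-1,0)$, $P_{N+1} = (0,0)$, $P_{N+2} = (0,1)$, $P_{N+3} = (-1,1)$. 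This quadrilateral is visibly convex and counterclockwise, and a one-line computation with Equation \ref{eqn:vector orientations} shows $(P_N, P_{N+1}, P_{N+2})$ and $(P_{N+1}, P_{N+2}, P_{N+3})$ are positive. Together with the corner-invariant condition $(x_{2N+4}, x_{2N+5}) \in K \times J$, which holds automatically because $[P] \in S_n(K,J)$, this gives the hypotheses of Lemma \ref{lem:S3 beta induction} at index $i = N+2$.

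Next I would run an induction on $j \ge N+2$. Lemma \ref{lem:S3 beta induction} applied at index $j$ yields $P_{j+2} \in \Int(P_{j-2}, P_{j-1}, P_{j+1}) \subset \AA^2$, convexity of the quadrilateral $(P_{j-1}, P_j, P_{j+1}, P_{j+2})$, and positivity of $(P_j, P_{j+1}, P_{j+2})$ and $(P_{j-1}, P_j, P_{j+2})$ --- exactly the data needed to re-apply the lemma at index $j+1$, the corner-invariant hypothesis again being free. Reindexing the conclusions via $j = i+2$, one obtains for every $i \ge N$: $P_i \in \AA^2$; the quadrilateral $(P_i, P_{i+1}, P_{i+2}, P_{i+3})$ is convex and counterclockwise, whence both $(P_i, P_{i+1}, P_{i+2})$ and $(P_i, P_{i+1}, P_{i+3})$ are positive (the diagonal triple having the same orientation as $(P_i,P_{i+1},P_{i+2})$ since $P_{i+3}$ lies on the interior side of the line $P_iP_{i+1}$); and $P_{i+4} \in \Int(P_i, P_{i+1}, P_{i+3})$. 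These are precisely the conditions of Definition \ref{def:spiral polygon} making $P$ an $N$-representative of type $\beta$. Since $N$ was arbitrary, $[P] \in \cS_{3,n}^\beta$, which gives $S_n(K,J) \subset \cS_{3,n}^\beta$ and hence the proposition.

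The only points requiring a little care, and I do not expect any genuine obstacle beyond them, are: checking that the normalized base quadrilateral really satisfies the orientation and convexity hypotheses of Lemma \ref{lem:S3 beta induction} (an elementary determinant computation), and keeping the index shift $j = i+2$ consistent so that the two distinct triples $(P_i,P_{i+1},P_{i+2})$ and $(P_i,P_{i+1},P_{i+3})$ required by type $\beta$ are each correctly sourced --- the first from the positivity output of the lemma (or the base case), the second from convexity of the propagating quadrilateral. All the substantive geometry has already been packaged into Lemma \ref{lem:S3 beta induction} and Proposition \ref{prop:S3 beta in S(K,J)}, so the argument is essentially bookkeeping.
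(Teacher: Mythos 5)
Your proposal is correct and follows essentially the same route as the paper: cite Proposition \ref{prop:S3 beta in S(K,J)} for one inclusion, normalize four consecutive vertices to a convex counterclockwise quadrilateral (the paper uses the unit square), verify the base hypotheses, and propagate with Lemma \ref{lem:S3 beta induction} by induction. The only differences are cosmetic (choice of base quadrilateral and slightly more explicit index bookkeeping).
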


\begin{proof}
    Proposition \ref{prop:S3 beta in S(K,J)} gives us $\cS_{3,n}^\beta \subset S_n(K,J)$, so we show the other containment. 
    Given $[P] \in S_n(K,J)$, we can find a representative $P$ that satisfies $P_{N} = (0,0)$, $P_{N+1} = (1,0)$, $P_{N+2} = (1,1)$, $P_{N+3} = (0,1)$. Corollary  \ref{cor:grids are 3 nice} shows that $P$ is 3-nice. To see that $(P_i, P_{i+1}, P_{i+2})$, $(P_{i}, P_{i+1}, P_{i+3})$ are positive, and $P_{i+4} \in \Int(P_{i}, P_{i+1}, P_{i+3})$, we may inductively apply Lemma \ref{lem:S3 beta induction}. This implies $[P] \in \cS_{3,n}^\beta$. 
\end{proof}

\section{A Birational Formula for \texorpdfstring{$T_3$}{T(3)}} \label{sec:formula}

Given two spaces $X$ and $Y$, a \textit{rational map} $f: X \dashrightarrow Y$ is an equivalence class of maps $f_U: U \rightarrow Y$ where $U$ is a dense open in $X$, and the equivalence relation is given by $f_U \sim f_V$ if they restrict to the same map on $U \cap V$. A map $f: X \dashrightarrow Y$ is \textit{birational} if there exists a rational map $g: Y \dashrightarrow X$ such that $g \circ f$ restricts to the identity on a dense open of $X$ and $f \circ g$ restricts to an identity on a dense open of $Y$. 

In this section, we show that $T_3: \cP_{n} \dashrightarrow \cP_{n}$ is a birational map by finding an explicit formula using the corner invariants. 

\subsection{The Formula} \label{subsec:T3 map formula and invariants}

Let $P$ be a twisted $n$-gon, and $P' = T_3(P)$. In this section, we use a different labeling convention:
\begin{equation} \label{eqn:delta 3,1 formula}
    P'_i = P_{i-2} P_{i+1} \cap P_{i-1} P_{i+2}.
\end{equation}

We let $x_j = x_j(P)$ and $x_j' = x_j(P')$ denote the corner invariants of $P$ and $P'$ respectively. Our goal is to show that $T_3$ is a birational map over the corner invariants. I discovered it using computer algebra and the reconstruction formula in \cite[Equation (19)]{schwartz2008discretemonodromypentagramsmethod}.

\begin{proposition} \label{prop:coordinate formula}
    Given $[P] \in \cP_{3,n}$, the following formula holds \textnormal{(}indices taken modulo $2n$\textnormal{)}:
    \begin{equation} \label{eqn:31 corner}
        \left\{
            \begin{aligned}
                x'_{2i} = x_{2i-2} \cdot \frac{(x_{2i-4} + x_{2i-1} - 1)}{x_{2i-2}x_{2i-1} - (1 - x_{2i+1})(1 - x_{2i-4})}; \\
                x'_{2i+1} = x_{2i+3} \cdot \frac{(x_{2i+2} + x_{2i+5} - 1)}{x_{2i+2}x_{2i+3} - (1 - x_{2i+5})(1 - x_{2i})}.
            \end{aligned}
        \right.
    \end{equation}
\end{proposition}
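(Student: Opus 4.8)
The plan is to derive formula \eqref{eqn:31 corner} directly by pushing the inverse-cross-ratio definitions through the map $T_3$ defined by \eqref{eqn:delta 3,1 formula}. The cleanest route is to work with an explicit representative of $[P]$ in $\RR\PP^2$, compute homogeneous coordinates for the vertices $P'_i = P_{i-2}P_{i+1} \cap P_{i-1}P_{i+2}$, and then plug those into the corner-invariant definition \eqref{eqn:corner invariant def}. Since $\chi$ is projectively invariant, I can choose any convenient normalization of five or six consecutive $P_j$'s at a time; by the symmetry $\sigma_i$ (index shift), it suffices to compute $x'_0$ and $x'_1$ and read off the general formula by relabeling. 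The key input is the reconstruction result of \cite{schwartz2008discretemonodromypentagramsmethod} (Equations 19, 20), which expresses a window of consecutive vertices $P_{j-2}, \dots, P_{j+2}$ in homogeneous coordinates as an explicit rational function of the corner invariants $x_{2j-4}, \dots, x_{2j+3}$ or so; this is exactly the substitution that turns the geometric intersection formula into the algebraic identity.

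Concretely, the steps I would carry out are: (1) Recall/set up the reconstruction parametrization: fix a lift $\tilde P_j \in \RR^3$ for a window of consecutive indices so that each $\tilde P_j$ is a known rational expression in nearby $x$'s (one standard choice puts $\tilde P_0, \tilde P_1, \tilde P_2, \tilde P_3$ at fixed reference points and lets the $x$'s determine $\tilde P_4, \tilde P_5, \dots$, with the backward vertices handled symmetrically). (2) Compute $\tilde P'_i$ as a cross product: $P_{i-2}P_{i+1}$ is the line $\tilde P_{i-2} \times \tilde P_{i+1}$ (as a covector), similarly $P_{i-1}P_{i+2}$, and their intersection point is the cross product of these two covectors, so $\tilde P'_i = (\tilde P_{i-2} \times \tilde P_{i+1}) \times (\tilde P_{i-1} \times \tilde P_{i+2})$, expandable by the Jacobi/Lagrange identity into a combination of the $\tilde P_j$'s with scalar-triple-product coefficients $[\tilde P_a, \tilde P_b, \tilde P_c] = \det(\tilde P_a, \tilde P_b, \tilde P_c)$. (3) Assemble the four lines through a common point needed to evaluate $x'_{2i}$ and $x'_{2i+1}$ via \eqref{eqn:corner invariant def} — e.g.\ for $x'_{2i}$ the lines $P'_{i-2}P'_{i-1}$, and the lines $P'_{i-2}P'_{i-1} \cap P'_i P'_{i+1}$ etc.\ — again as cross products, and compute the relevant cross ratio as a ratio of determinants, which by multilinearity collapses to a ratio of products of the triple products $[\,\cdot\,,\cdot\,,\cdot\,]$. (4) Substitute the reconstruction expressions and simplify; the determinants $[\tilde P_a,\tilde P_b,\tilde P_c]$ are themselves low-degree in the $x$'s via \cite{schwartz2008discretemonodromypentagramsmethod}, and after cancellation the result should match \eqref{eqn:31 corner}. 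Finally, verify that the denominators are the claimed expressions $x_{2i-2}x_{2i-1} - (1-x_{2i+1})(1-x_{2i-4})$ and that all index shifts are consistent mod $2n$.

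An alternative, possibly shorter, derivation avoids global reconstruction: since a single corner invariant $x_{2i}$ depends only on $P_{i-2}, \dots, P_{i+2}$, and the primed vertices $P'_{i-2}, \dots, P'_{i+2}$ entering $x'_{2i}$ depend on $P_{i-4}, \dots, P_{i+3}$ (finitely many consecutive original vertices), I can just place those $\sim 8$ vertices by hand — normalize four of them to a standard frame via Theorem \ref{thm:proj transform}, and let the corner invariants $x_{2i-4}, x_{2i-3}, \dots, x_{2i+3}$ parametrize the rest using Proposition \ref{prop:P2 position and corner invariants} and the recursive determination of $P_{j+2}$ from $P_{j-2}, P_{j-1}, P_j, P_{j+1}$ and $(x_{2j}, x_{2j+1})$. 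Then everything becomes a finite symbolic computation in those eight variables, which is honestly how the author says the formula was found (computer algebra).

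The main obstacle is purely the bookkeeping: getting the index offsets exactly right (note the labeling convention here, \eqref{eqn:delta 3,1 formula}, differs from \eqref{eqn:delta k,1 formula} used earlier — $P'_i$ is centered differently — so one must be careful which window of $x$'s controls $x'_{2i}$), and managing the size of the rational expressions so that the cancellation to the compact form \eqref{eqn:31 corner} is transparent rather than a miracle. I expect the honest writeup to either (a) cite the reconstruction formula and present the computation as a determinant-ratio identity that a reader can in principle expand, or (b) explicitly exhibit the normalized coordinates of the relevant $\sim 8$ vertices and their images, then state that direct substitution yields the formula. There is no conceptual difficulty — the only real risk is an off-by-one in the indices or a sign error in the cross-ratio convention (the paper uses the \emph{inverse} cross ratio $\chi$, so one must track that consistently).
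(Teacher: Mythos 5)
Your proposal matches the paper's own treatment: the paper gives no detailed derivation, only the instruction to take the reconstruction formula of \cite{schwartz2008discretemonodromypentagramsmethod}, apply $T_3$ via Equation \ref{eqn:delta 3,1 formula}, and recompute the corner invariants (with a passing mention of a geometric cross-ratio alternative), which is exactly the computation you outline. Your writeup is in fact more explicit about the mechanics (cross products, triple-product ratios, index bookkeeping) than the paper itself.
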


One can verify Equation \eqref{eqn:31 corner} with the following procedure: Given the corner invariants of $[P]$, use the reconstruction formula from \cite[Equation (19)]{schwartz2008discretemonodromypentagramsmethod} to obtain a representative $P$. Apply $T_3$ on $P$ as in Equation \eqref{eqn:delta 3,1 formula} to get $P' = T_3(P)$. Then, compute the corner invariants of $P'$. 
We present a geometric proof of Equation \eqref{eqn:31 corner} using cross-ratio identities. We start with the following lemma, which is a classical observation in projective geometry called ``quadrangular sets.'' 

\begin{lemma} \label{lem:quadrangle}
    Let $Q_1, Q_2, Q_3, Q_4$ be four points in general position, and let $\omega$ be a line that contains none of the four points. For all $i \neq j$, let $l_{ij} = Q_iQ_j$ and $S_{ij} = \omega \cap l_{ij}$ Then, 
    \begin{equation*}
        \chi(S_{12}, S_{13}, S_{14}, S_{24}) = \chi(S_{23}, S_{13}, S_{34}, S_{24}).
    \end{equation*}
\end{lemma}

\begin{figure}[ht]
    \centering
    \footnotesize
    
    \def\svgwidth{0.7\columnwidth}
    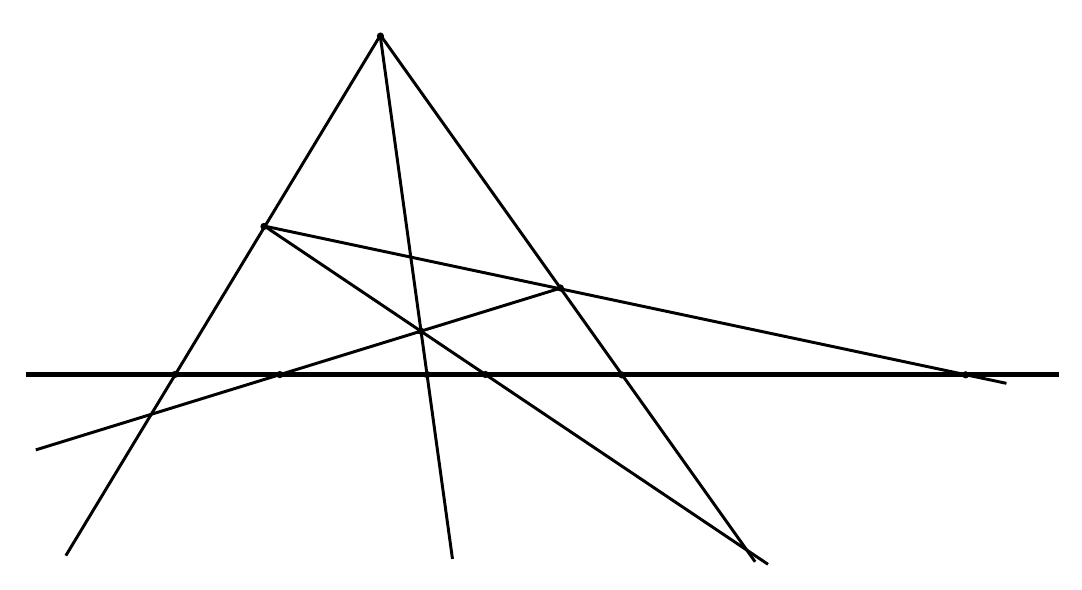

    \caption{Point configurations of Lemma \ref{lem:quadrangle}}
    \label{fig:quadrangle section}
\end{figure}

\begin{proof}
    Let $O = l_{13} \cap l_{24}$. See Figure \ref{fig:quadrangle section} for an example of the point configurations. Applying Equation \eqref{eqn:chi prop} on $(l_{12}, l_{13}, l_{14}, Q_1D)$ with respect to $\omega$ and $Q_2Q_4$ gives us 
    \begin{equation*}
        \chi(S_{12}, S_{13}, S_{14}, S_{24}) 
        \overset{\omega}{=} \chi(l_{12}, l_{13}, l_{14}, Q_1D)
        \overset{l_{24}}{=} \chi(Q_2, O, Q_4, S_{24}).
    \end{equation*}
    Next, applying Equation \eqref{eqn:chi prop} twice on $(l_{23}, l_{13}, l_{34}, Q_3D)$ with respect to $l_{24}$ and $\omega$ gives us 
    \begin{equation*}
        \chi(Q_2, O, Q_4, S_{24})
        \overset{l_{24}}{=} \chi(l_{23}, l_{13}, l_{34}, Q_3D) 
        \overset{\omega}{=} \chi(S_{23}, S_{13}, S_{34}, S_{24}). 
    \end{equation*}
    Combining the above two equations completes the proof. 
\end{proof}

\begin{figure}[ht]
    \centering
    \scriptsize
    
    \def\svgwidth{0.6\columnwidth}
    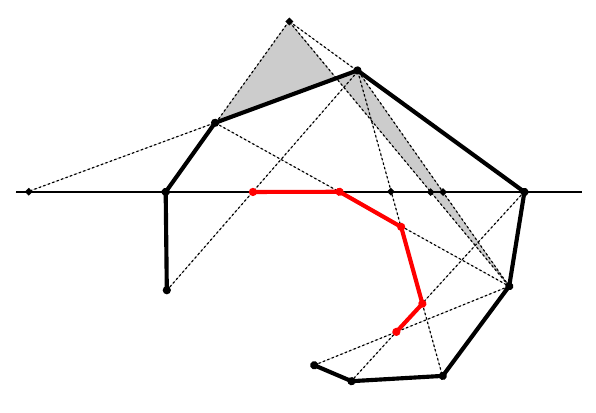

    \caption{Visualization of Points Assigned in Equation \eqref{eqn:eight points}. The thick black line segments are edges connecting vertices of $P$, and the thick red line segments are edges connecting vertices of $P'$.}
    \label{fig:31 formula config}
\end{figure}

\begin{proof}[Proof of Proposition \ref{prop:coordinate formula}]
    From the symmetry of Equation \eqref{eqn:31 corner}, it suffices to prove the formula for $x'_0$. That is,
    \begin{equation} \label{eqn:coordinate formula x0}
        x'_0 = \frac{x_{-2} (x_{-4} + x_{-1} - 1)}{x_{-2}x_{-1} - (1 - x_{-4})(1 - x_1)}.
    \end{equation}

    Let $l_{i,j} = P_i P_j$ and $O = l_{-3,-2} \cap l_{-1,0}$. We label points as follows:
    \begin{equation} \label{eqn:eight points}
        \begin{aligned}
            & A = P'_{-2} ; \\ 
            & B = P'_{-1} ;
        \end{aligned}
        \hspace{20pt}
        \begin{aligned}
            & C = l_{-3,0} \cap l_{-2,1} ; \\ 
            & D = P_0 ;
        \end{aligned}
        \hspace{20pt}
        \begin{aligned}
            & E = P_{-3} ; \\ 
            & F = l_{-3,0} \cap l_{-1,1} ;
        \end{aligned}
        \hspace{20pt}
        \begin{aligned}
            & G = l_{-3,0} \cap l_{-2,-1} ; \\
            & H = l_{-3,0} \cap O P_1 .
        \end{aligned}
    \end{equation}

    Since $[P] \in \cP_{3,n}$, every five consecutive points of $[P]$ are in general position. This ensures that point $O$ and the points in Equation \eqref{eqn:eight points} are all distinct. See Figure \ref{fig:31 formula config} for a visualization of the assignment of labels to these points.

    It follows from Equation \eqref{eqn:corner invariant def} that $x'_0 = \chi(A, B, C, D)$. Using Equation \eqref{eqn:corner invariant lines def}, we have  
    \begin{equation} \label{eqn:corner invariants sub}
        \begin{aligned}
            x_{-4} 
            &= \chi(l_{-1,-4},\, l_{-1,-3},\, l_{-1,-2},\, l_{-1,0})
            \overset{l_{-3,0}}{=} \chi(A, E, G, D); \\
            x_{-2} 
            &= \chi(l_{0,-3},\, l_{0,-2},\, l_{0,-1},\, l_{0,1})
            \overset{l_{-3,0}}{=} \chi(E, B, H, D); \\
            x_{-1} 
            &= \chi(l_{-2,1},\, l_{-2,0},\, l_{-2,-1},\, l_{-2,-3})
            \overset{l_{-3,0}}{=} \chi(B, D, G, E); \\
            x_1 
            &= \chi(l_{-1,2},\, l_{-1,1},\, l_{-1,0},\, l_{-1,-2})
            \overset{l_{-3,0}}{=} \chi(C, F, D, G).
        \end{aligned}
    \end{equation}
    
    We may further invoke Lemma \ref{lem:quadrangle} with $Q_1 = P_{-2}$, $Q_2 = O$, $Q_3 = P_1$, $Q_4 = P_{-1}$, and $\omega = l_{-3,0}$. This gives us 
    \begin{equation} \label{eqn:x_-2 sub}
        x_{-2} = \chi(E, B, H, D) = \chi(G, B, F, D). 
    \end{equation}

    The rest of the proof is just algebraic verification. Normalize with a projective transformation so that $l_{-3,0}$ is the $x$-axis of $\AA^2$. Let $a$, $b$, $c$, $d$, $e$, $f$, $g$, $h$ be coordinates of $A$, $B$, $C$, $D$, $E$, $F$, $G$, $H$ respectively. Plugging \eqref{eqn:corner invariants sub} and \eqref{eqn:x_-2 sub} into the numerator of \eqref{eqn:coordinate formula x0} gives us
    \begin{equation*} \label{eqn:num}
        \begin{aligned}
            x_{-2} (x_{-4} + x_{-1} - 1) 
            &= \chi(G, B, F, D) \left( \chi(A, E, G, D) + \chi(B, D, G, E)- 1 \right) \\
            &= \, \frac{(g - b)(f - d)}{(g - f)(b - d)} 
            \left( \frac{(a - e)(g - d)}{(a - g)(e - d)} + \frac{(b - e)(g - d)}{(b - g)(e - d)} \right) \\
            &= \, \frac{(a - b)(g - d)(e - g)(d - f)}{(a - g)(b - d)(e - d)(g - f)} .
        \end{aligned}
    \end{equation*}
    The denominator can be computed similarly. We skip the computation and list the results: 
    \begin{equation*} \label{eqn:denom}
        x_{-2}x_{-1} - (1 - x_{-4})(1 - x_1)
        = \frac{(a - c)(g - d)(d - f)(e - g)}{(a - g)(c - d)(d - e)(f - g)}.
    \end{equation*}
    Combining the above two equations gives us 
    \begin{equation*}
        \frac{x_{-2}(x_{-4} + x_{-1} - 1)}{x_{-2}x_{-1} - (1 - x_{-4})(1 - x_{1})} 
        = \frac{(a - b)(c - d)}{(a - c)(b - d)} 
        = \chi(A, B, C, D) 
        = x'_0 ,
    \end{equation*}
    which is precisely Equation \eqref{eqn:coordinate formula x0}.
\end{proof}

Next, we provide a formula for the inverse of $T_3$. 

\begin{proposition} \label{prop:coordinate inverse formula}
    The map $T_3: \cP_n \dashrightarrow \cP_n$ is birational. Its inverse is given by 
    \begin{equation} \label{eqn:T3 inverse corner}
        \left\{
            \begin{aligned}
                x_{2i} = x'_{2i+2} \cdot \frac{(x'_{2i+4} + x'_{2i+1} - 1)}{x'_{2i+1} x'_{2i+2} - (1 - x'_{2i-1})(1 - x'_{2i+4})}; \\
                x_{2i+1} = x'_{2i-1} \cdot \frac{(x'_{2i-3} + x'_{2i} - 1)}{x'_{2i} x'_{2i-1} - (1 - x'_{2i+2})(1 - x'_{2i-3})}.
            \end{aligned}
        \right.
    \end{equation}
\end{proposition}

We will give an algebraic proof. Consider two families of rational maps $\{\mu_{(s,t)}: \RR^{2n} \dashrightarrow \RR^{2n}\}_{(s,t) \in \ZZ^2}$ and $\{\nu_{(s,t)}: \RR^{2n} \dashrightarrow \RR^{2n}\}_{(s,t) \in \ZZ^2}$. Write $(a_0, \ldots, a_{2n-1}) = \mu_{(s,t)}(x_0, \ldots, x_{2n-1})$ and $(b_0, \ldots, b_{2n-1}) = \nu_{(s,t)}(x_0, \ldots, x_{2n-1})$. Then, we set 
\begin{equation} \label{eqn:mu nu def}
    \left\{ 
        \begin{aligned}
            & a_{2i} = \frac{1 - x_{2i+s}}{x_{2i+s+t}} \\
            & a_{2i+1} = \frac{1 - x_{2i+1-s}}{x_{2i+1-s-t}};
        \end{aligned}
    \right. 
    \ \ \ \ 
    \left\{ 
        \begin{aligned}
            & b_{2i} = \frac{1 - x_{2i+s}}{1 - x_{2i+s} x_{2i+s+t}} \\
            & b_{2i+1} = \frac{1 - x_{2i+1-s}}{1 - x_{2i+1-s} x_{2i+1-s-t}}.
        \end{aligned}
    \right. 
\end{equation}

\begin{lemma} \label{lem:mu nu birational}
    Let $\varphi: \ZZ^2 \rightarrow \ZZ^2$ be the map given by 
    \begin{equation}
        \varphi(s,t) = ((-1)^{s+1}s,\, (-1)^s (2s + t)).
    \end{equation}
    Then, $\varphi$ is an involution. Moreover, when $t$ is odd, $\mu_{(s,t)}^{-1} = \nu_{\varphi(s,t)}$ and $\nu_{(s,t)}^{-1} = \mu_{\varphi(s,t)}$. 
\end{lemma}

\begin{proof}
    To see $\varphi$ is an involution, a direct computation shows that
    \begin{equation*}
        \begin{aligned}
            \varphi^2(s,t)
            &= \varphi((-1)^{s+1}s,\, (-1)^s (2s + t)) \\
            &= \left( (-1)^{(-1)^{s+1}s+s+2}s, \, (-1)^{(-1)^{s+1}s} \, (2(-1)^{s+1}s + (-1)^s(2s + t)) \right) = (s,t).
        \end{aligned}
    \end{equation*}
    Next, we show that when $t$ is odd, $\mu^{-1}_{(s,t)} = \nu_{\varphi(s,t)}$. We will show by direct computation that $\mu_{(s,t)} \circ \nu_{\varphi(s,t)}$ is the identity on the $2i$-th coordinate when $s$ is even. First, when $s$ is even, $\varphi(s,t) = (-s, 2s+t)$. The $2i$-th coordinate of $\mu_{(s,t)} \circ \nu_{\varphi(s,t)}$ is given by 
    \begin{equation*}
        \begin{aligned}
            &\left( 1 - \frac{1 - x_{2i+s+(-s)}}{1 - x_{2i+s+(-s)} x_{2i + s + (-s) + (2s + t)}} \right) \cdot \left( \frac{1 - x_{2i + s + t - (-s)}}{1 - x_{2i + s + t - (-s)} x_{2i + s + t - (-s) - (2s + t)}} \right)^{-1} \\
            &= \left( 1 - \frac{1 - x_{2i}}{1 - x_{2i} x_{2i + 2s + t}} \right) \left( \frac{1 - x_{2i + 2s + t} x_{2i}}{1 - x_{2i + 2s + t}} \right) = x_{2i}.
        \end{aligned}
    \end{equation*}
    This is precisely what we want. One can similarly carry out the computation of $\nu_{\varphi(s,t)} \circ \mu_{s,t}$ for the $(2i+1)$-th coordinate, and $s$ odd. We will omit these heavy computations and conclude that $\mu_{(s,t)}^{-1} = \nu_{\varphi(s,t)}$. Finally, to see $\nu_{(s,t)}^{-1} = \mu_{\varphi(s,t)}$, observe that $(-1)^s(2s + t)$ is odd iff $t$ is odd. Therefore, $\nu_{s,t} \circ \mu_{\varphi(s,t)} = \nu_{\varphi^2(s,t)} \circ \mu_{\varphi(s,t)}$ is the identity map by the previous argument. The same argument shows that $\mu_{\varphi(s,t)} \circ \nu_{(s,t)}$ is the identity. 
\end{proof}

The following corollary is immediate. We omit the proof. 

\begin{corollary} \label{cor:mu nu birational}
    For all $(s,t) \in \ZZ^2$ such that $t$ is odd, $\mu_{(s,t)}$ and $\nu_{(s,t)}$ are birational maps. 
\end{corollary}

\begin{proof}[Proof of Proposition \ref{prop:coordinate inverse formula}]
    We first claim that $T_3 = \nu_{(-1, -1)} \circ \mu_{(3, -3)}$. We will provide the computation for even coordinates. Let $(a_0, \ldots, a_{2n-1})$ denote the image of $(x_0, \ldots, x_{2n-1})$ under $\mu_{(3, -3)}$, and let $(b_0, \ldots, b_{2n-1})$ denote the image of $(a_0, \ldots, a_{2n-1})$ under $\nu_{(-1,-1)}$. Then, we have 
    \begin{equation*}
        \begin{aligned}
            b_{2i} 
            &= \frac{1 - a_{2i-1}}{1 - a_{2i-1} a_{2i-2}}
            = \left( 1 - \frac{1 - x_{2i-4}}{x_{2i - 1}} \right) \cdot \left( 1 - \frac{(1 - x_{2i-4})(1 - x_{2i+1})}{x_{2i - 1} x_{2i-2}} \right)^{-1} \\
            &= \frac{x_{2i-2} (x_{2i-1} + x_{2i-4} + 1)}{x_{2i-1} x_{2i-2} - (1 - x_{2i-4})(1 - x_{2i+1})}.
        \end{aligned}
    \end{equation*}
    Observe that this is precisely the first line of \eqref{eqn:31 corner}. The computation for $b_{2i+1}$ is analogous, thus omitted. Then, by Corollary \ref{cor:mu nu birational}, $T_3^{-1} = \nu_{(3, -3)} \circ \mu_{(-1, 3)}$. Finally, Equation \eqref{eqn:T3 inverse corner} follows from a direct computation of $\nu_{(3, -3)} \circ \mu_{(-1, 3)}$ using Equation \eqref{eqn:mu nu def}, which we will omit.
\end{proof}

\subsection{Conjugated Corner Invariants and Its \texorpdfstring{$T_3$}{T3} Formula}

To relate Equation \eqref{eqn:31 corner} to parameters $(y_r)_{r \in \ZZ^2}$ in \cite{GP2016ymeshes}, it is convenient to consider another coordinate system of $\cP_n$, which we define below. 

\begin{definition} \label{def:conj corner inv}
    Given $[P] \in \cP_n$, define the \textit{conjugated corner invariants} to be coordinate functions $\tilde x_0(P), \ldots, \tilde x_{2n-1}(P)$ given by $\tilde x_j(P) = \frac{x_j(P)}{x_j(P)-1}$. 
\end{definition}

The conjugated corner invariants can be viewed as the image of the corner invariants under a birational map $\lambda: \RR^{2n} \dashrightarrow \RR^{2n}$ sending each coordinate $x_j \mapsto \frac{x_j}{x_j-1}$. Observe that $\lambda^2$ restricted to the dense open set $(\RR - \{0,1\})^{2n}$ is the identity map, so $\tilde x_j(P)$ is also a coordinate system for $\cP_n$. Geometrically, the map $\lambda$ corresponds to a different choice of permutation in the cross-ratio. 

Throughout this section, we will use $\tilde x_j = \tilde x_j(P)$ and $\tilde x_j' = \tilde x_j(P')$ to denote the conjugate corner invariants of $P$ and $P'$. 
We start by observing some symmetries of conjugating our factorization maps $\mu_{(s,t)}$ and $\nu_{(s,t)}$ from Equation \eqref{eqn:mu nu def}. 

\begin{lemma} \label{lem:lambda conjugate mu and nu}
    For all $(s,t) \in \ZZ^2$, we have $\lambda \circ \mu_{(s,t)} \circ \lambda = \nu_{(s+t,-t)}$. 
\end{lemma}

\begin{proof} 
    We can check this by direct computation. We show that the equation holds on even coordinates. The $2i$-th coordinate of $\mu_{(s,t)} \circ \lambda$ is given by
    \begin{equation*}
        \frac{1 - x_{2i+s} \cdot (x_{2i+s} - 1)^{-1}}{x_{2i+s+t} \cdot (x_{2i+s+t} - 1)^{-1}}
        = \frac{1 - x_{2i+s+t}}{x_{2i+s+t} (x_{2i+s} - 1)}
    \end{equation*}
    The $2i$-th coordinate of $\lambda \circ \mu_{(s,t)} \circ \lambda$ is given by
    \begin{equation*}
        \left( \frac{1 - x_{2i+s+t}}{x_{2i+s+t} (x_{2i+s} - 1)} \right) \cdot \left( \frac{1 - x_{2i+s+t}}{x_{2i+s+t} (x_{2i+s} - 1)} - 1 \right)^{-1}
        = \frac{1 - x_{2i+s+t}}{1 - x_{2i+s+t}x_{2i+s}},
    \end{equation*}
    which is precisely the $2i$-th coordinate of $\nu_{(s+t, -t)}$. The computation for the odd coordinates is similar.
\end{proof}

Since $\lambda$ is an involution, it immediately follows that $\lambda \circ \nu_{(s,t)} \circ \lambda = \mu_{(s+t, -t)}$. This allows us to obtain a formula for $T_3$ with respect to the conjugated corner invariants.

\begin{proposition} \label{prop:T3 formula conj corner inv}
    Given any 3-nice twisted $n$-gon $P$, the following formula holds (indices taken modulo $2n$):
    \begin{equation} \label{eqn:T3 formula conj corner inv}
        \left\{ 
            \begin{aligned}
                & \tilde x'_{2i} = \tilde x_{2i-2} \cdot \frac{(1 - \tilde x_{2i-1}\tilde x_{2i-4})(1 - \tilde x_{2i+1})}{(1 - \tilde x_{2i+1}\tilde x_{2i-2})(1 - \tilde x_{2i-1})} ; \\
                & \tilde x'_{2i+1} = \tilde x_{2i+3} \cdot \frac{(1 - \tilde x_{2i+2}\tilde x_{2i+5})(1 - \tilde x_{2i})}{(1 - \tilde x_{2i}\tilde x_{2i+3})(1 - \tilde x_{2i+2})} . 
            \end{aligned}
        \right.
    \end{equation}
\end{proposition}

\begin{proof}
    From the proof of Proposition \ref{prop:coordinate inverse formula}, we saw that the formula for $T_3$ on the corner invariants is given by $\nu_{(-1,-1)} \circ \mu_{(3,-3)}$. It follows that the formula for conjugated corner invariants is $\lambda \circ \left( \nu_{(-1,-1)} \circ \mu_{(3,-3)} \right) \circ \lambda$. By Lemma \ref{lem:lambda conjugate mu and nu}, 
    \begin{equation*}
        \lambda \circ \left( \nu_{(-1,-1)} \circ \mu_{(3,-3)} \right) \circ \lambda
        = \left( \lambda \circ \nu_{(-1,-1)} \circ \lambda \right) \circ \left( \lambda \circ \mu_{(3,-3)} \circ \lambda \right)
        = \mu_{(-2,1)} \circ \nu_{(0,3)} .
    \end{equation*}
    It remains to check that $\mu_{(-2,1)} \circ \nu_{(0,3)}$ agrees with Equation \eqref{eqn:T3 formula conj corner inv}. The $2i$-th coordinate of $\mu_{(-2,1)} \circ \nu_{(0,3)}$ is given by 
    \begin{equation*}
        \left( 1 - \frac{1 - \tilde x_{2i-2}}{1 - \tilde x_{2i-2} \tilde x_{2i+1}} \right) \cdot \left( \frac{1 - \tilde x_{2i-1}}{1 - \tilde x_{2i-1} \tilde x_{2i-4}} \right)^{-1}
        = \frac{\tilde x_{2i-2} (1 - \tilde x_{2i-1} \tilde x_{2i-4})(1 - \tilde x_{2i+1})}{(1 - \tilde x_{2i-2} \tilde x_{2i+1})(1 - \tilde x_{2i-1})}.
    \end{equation*}
    This is precisely $\tilde x'_{2i}$ from Equation \eqref{eqn:T3 formula conj corner inv}. The computation for odd coordinates is omitted. 
\end{proof}

Using Lemma \ref{lem:mu nu birational}, we can easily compute the formula of $T_3^{-1}$ with respect to the conjugated corner invariants. The proof is again a direct computation, so we omit it.

\begin{corollary} \label{cor:T3 inverse formula conj corner inv}
    The formula for $T_3^{-1}$ with conjugated corner invariants is given by $\mu_{(0,3)} \circ \nu_{(2, -3)}$. More specifically, 
    \begin{equation} \label{eqn:T3 inverse formula conj corner inv}
        \left\{ 
            \begin{aligned}
                & \tilde x_{2i} = \tilde x'_{2i+2} \cdot \frac{(1 - \tilde x'_{2i+1} \tilde x'_{2i+4})(1 - \tilde x'_{2i-1})}{(1 - \tilde x'_{2i-1}\tilde x'_{2i+2})(1 - \tilde x'_{2i+1})} ; \\
                & \tilde x_{2i+1} = \tilde x'_{2i-1} \cdot \frac{(1 - \tilde x'_{2i} \tilde x'_{2i-3})(1 - \tilde x'_{2i+2})}{(1 - \tilde x'_{2i+2} \tilde x'_{2i-1})(1 - \tilde x'_{2i})} .
            \end{aligned}
        \right.
    \end{equation}
\end{corollary}

\subsection{Relation to \texorpdfstring{$Y$}{Y}-Variables} \label{subsec:Y-variable comparison}

In this section, we discuss how Equation \eqref{eqn:delta 3,1 formula} generalizes the results from \cite{GP2016ymeshes}. 
The propositions in this section hold for all four cells $S_n(I,J)$, $S_n(J,I)$, $S_n(K,J)$, $S_n(J,K)$. For notational convenience, our statements will only mention $S_n(J, I)$. The readers may assume that the propositions hold for the other three cells with the same proof. 

The map $T_3$ along with the labeling convention of Equation \eqref{eqn:delta 3,1 formula} corresponds to the following construction in \cite{GP2016ymeshes}. Let $a, b, c, d \in \ZZ^2$ be distinct and assume $a_2 \leq b_2 \leq c_2 \leq d_2$. Say that $S = \{a, b, c, d\}$ is a \textit{$Y$-pin} if $b_2 < c_2$ and the vectors $b - a$, $c - a$, $d - a$ generate all of $\ZZ^2$. 

\begin{definition}[{\cite[Definition 1.4]{GP2016ymeshes}}] \label{def:Y-mesh}
    Let $S = \{a, b, c, d\}$ be a $Y$-pin and suppose $D \geq 2$. A \textit{$Y$-mesh of type $S$ and dimension $D$} is a grid of points $\hat P_{i,j}$ in $\RR\PP^D$ with $i, j \in \ZZ$ which together span all of $\RR\PP^D$ and such that 
    \begin{itemize}
        \item $\hat P_{r+a}$, $\hat P_{r+b}$, $\hat P_{r+c}$, $\hat P_{r+d}$ are distinct for all $r \in \ZZ^2$.
        \item Let $L_r = \hat P_{r+a} \hat P_{r+b}$. Then, $\hat P_{r+a}$, $\hat P_{r+b}$, $\hat P_{r+c}$, $\hat P_{r+d}$ all lie on $L_r$ for all $r \in \ZZ^2$.
        \item The four lines $L_{r-a}$, $L_{r-b}$, $L_{r-c}$, $L_{r-d}$ (all of which contain $\hat P_r$) are distinct for all $r \in \ZZ^2$.
    \end{itemize}
\end{definition}

Let $S = \{(-1,0), (2,0), (0,1), (1,1)\}$, which is a $Y$-pin. Given a representative $P$ of some $[P] \in S_n(J,I)$, we can consider a grid $(\hat P_{i,j})_{(i,j) \in \ZZ^2}$ where $\hat P_{i,j}$ is the $i$-th vertex of $T_3^j (P)$. 

\begin{proposition} \label{prop:Pij is Y-mesh}
    $(\hat P_{i,j})$ is a $Y$-mesh of type $S$ and dimension 2. 
\end{proposition}

\begin{proof}
    The first two conditions of Definition \ref{def:Y-mesh} are straightforward to verify using the identification $S_n(J,I) = \cS_{3,n}^\alpha$ from Proposition \ref{prop:S3 alpha equals in S(J,I)}. For the third condition, let $P^{(j)} = T_3^j(P)$. Then, we have 
    \begin{equation*}
        L_{r-a} = P^{(j)}_{i-1} P^{(j)}_{i+2}, \hspace{10pt}
        L_{r-b} = P^{(j)}_{i-1} P^{(j)}_{i-4}, \hspace{10pt}
        L_{r-c} = P^{(j)}_{i-1} P^{(j)}_{i}, \hspace{10pt}
        L_{r-d} = P^{(j)}_{i-1} P^{(j)}_{i-2}.
    \end{equation*}
    Notice also that $L_{r-a} = P^{(j+1)}_{i} P^{(j+1)}_{i+1}$ and $L_{r-b} = P^{(j+1)}_{i-2} P^{(j+1)}_{i-1}$, so 3-niceness of $P^{(j+1)}$ implies they are distinct. The other pairings are distinct because of 3-niceness of $P^{(j)}$. 
\end{proof}

\cite{GP2016ymeshes} then introduces the parameters $y_r(\hat P)$ associated to a $Y$-mesh. Fix a $Y$-pin $S = \{a,b,c,d\}$ and a $Y$-mesh $\hat P$ of type $S$ and dimension $D$. For all $r \in \ZZ^2$, consider 
\begin{equation} \label{eqn:def of y-variables}
    y_r(\hat P) = - \chi(\hat P_{r+a}, \hat P_{r+c}, \hat P_{r+d}, \hat P_{r+b}).
\end{equation}
See the left side of Figure \ref{fig:y-variables} for the setup using the $Y$-mesh from Proposition \ref{prop:Pij is Y-mesh}. \cite[Theorem 1.6]{GP2016ymeshes} give us the following relation on $y_r$:
\begin{equation} \label{eqn:y-mesh formula}
    y_{i+1,j} \, y_{i+1,j+2}
    = \frac{(1 + y_{i-1,j+1})(1 + y_{i+3,j+1})}{(1 + y_{i,j+1}^{-1})(1 + y_{i+2,j+1}^{-1})}.
\end{equation}

\begin{figure}[ht]
    \centering
    \footnotesize
    
    \def\svgwidth{0.8\columnwidth}
    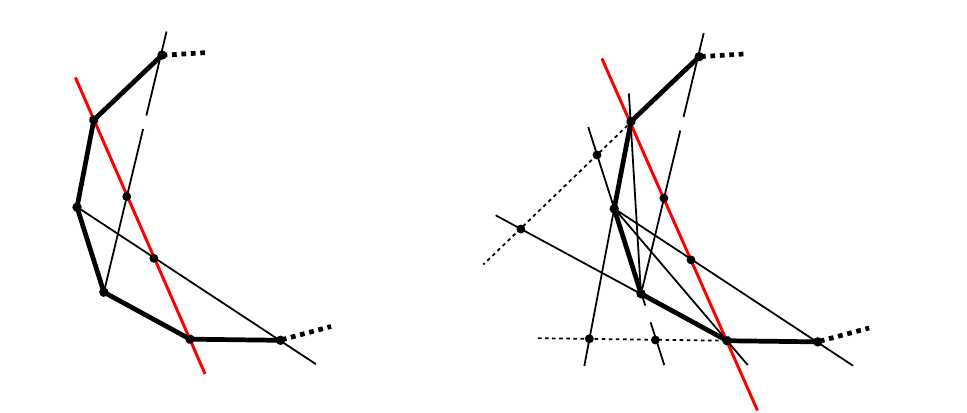

    \caption{Left: Definition of $y_r(\hat P)$ for the $Y$-mesh from Proposition \ref{prop:Pij is Y-mesh}. Right: Relationship between $y_r(\hat P)$ and conjugated corner invariants.}
    \label{fig:y-variables}
\end{figure}

\begin{lemma}
    Given a representative $P$ of $[P] \in S_n(I,J)$ with conjugated corner invariants $\tilde x_j = \tilde x_j(P)$. Let $(\hat P_{i,j})$ be its corresponding $Y$-mesh with $y_r = y_r(\hat P)$ for all $r \in \ZZ^2$. Then, for all $i \in \ZZ$, 
    \begin{equation} \label{eqn:y-variable and corner inv}
        y_{i,0} = - \tilde x_{2i} \tilde x_{2i+3}.
    \end{equation}
\end{lemma}

\begin{proof}
    Let $l_{a,b} = P_{i+a}P_{i+b}$. See right side of Figure \ref{fig:y-variables} for the setup. Equation \eqref{eqn:corner invariant lines def} gives us 
    \begin{equation*}
        \tilde x_{2i} = \frac{x_{2i}}{x_{2i} - 1}
        = \chi(l_{1,-2}, l_{1,-1}, l_{1,2}, l_{1,0}); \hspace{10pt}
        \tilde x_{2i+3} = \frac{x_{2i+3}}{x_{2i+3} - 1}
        = \chi(l_{0,3}, l_{0,2}, l_{0,-1}, l_{0,1}).
    \end{equation*}
    Notice that $(l_{1,-1} \cap l_{0,-1}) \cap (l_{1,2} \cap l_{0,2}) = P_{i-1} P_{i+2}= l_{-1,2}$. Then, from elementary cross ratio identities we have 
    \begin{equation*}
        \begin{aligned}
            \tilde x_{2i} \tilde x_{2i+3}
            &= \chi(l_{1,-1} \cap l_{-1,2}, l_{1,-2} \cap l_{-1,2}, l_{0,3} \cap l_{-1,2}, l_{0,2} \cap l_{-1,2}) \\
            &= \chi (\hat P_{i-1,0}, \hat P_{i,1}, \hat P_{i+1,1}, \hat P_{i+2,0}) = - y_{i,0},
        \end{aligned}
    \end{equation*}
    which is precisely Equation \eqref{eqn:y-variable and corner inv}. 
\end{proof}

\begin{remark}
    Equation \eqref{eqn:y-variable and corner inv} is very similar to the correspondence of $y_r$ and corner invariants in the $T_2$ case. Let $P$ be an arbitrary twisted $n$-gon with $P' = T_2(P)$. If we use the labeling convention $P'_i = P_{i-1} P_{i+1} \cap P_i P_{i+2}$, then the $T_2$-orbit $(\hat P_{i,j})_{(i,j) \in \ZZ^2}$, where $\hat P_{i,j}$ is the $i$-th vertex of $T_2^j(P)$, is a $Y$-mesh of type $S = \{(-1,0), (1,0), (-1,1), (0,1)\}$. Denote by $x_j = x_j(P)$ the corner invariants of $P$. Then, for all $i \in \ZZ$, 
    \begin{equation} \label{eqn:y-variable of T2}
        y_{i,0} = -x_{2i+1} x_{2i+2} .
    \end{equation}
    For the proof of Equation \eqref{eqn:y-variable of T2}, see \cite[Equation (2.2)]{glick2011Ypatterns}.
\end{remark}

\begin{theorem}
    For the $Y$-pin $S = \{(-1,0), (2,0), (0,1), (1,1)\}$, the transformation formula of $y_r$ from \textnormal{\cite[Theorem 1.6]{GP2016ymeshes}} is a direct consequence of the birational formula for the conjugated corner invariants under $T_3$.
\end{theorem}

\begin{proof}
    It suffices to show that we can use Equation \eqref{eqn:y-variable and corner inv} to derive \eqref{eqn:y-mesh formula} for $j = -1$. We first compute $y_{i+1,-1}$ and $y_{i+1, 1}$ using Equation \eqref{eqn:T3 formula conj corner inv} and \eqref{eqn:T3 inverse formula conj corner inv}:
    \begin{equation}
        \begin{aligned}
            y_{i+1,-1} 
            &= - \frac{\tilde x_{2i+4} (1 - \tilde x_{2i+3} \tilde x_{2i+6}) (1 - \tilde x_{2i+1})}{(1 - \tilde x_{2i+1} \tilde x_{2i+4}) (1 - \tilde x_{2i+3})} 
            \cdot \frac{\tilde x_{2i+3} (1 - \tilde x_{2i+1} \tilde x_{2i+4}) (1 - \tilde x_{2i+6})}{(1 - \tilde x_{2i+3} \tilde x_{2i+6}) (1 - \tilde x_{2i+4})} \\
            &= - \frac{\tilde x_{2i+3} \tilde x_{2i+4} (1 - \tilde x_{2i+1}) (1 - \tilde x_{2i+6})}{(1 - \tilde x_{2i+3}) (1 - \tilde x_{2i+4})}; \\
            y_{i+1,1} 
            &= - \frac{\tilde x_{2i} (1 - \tilde x_{2i-2} \tilde x_{2i+1}) (1 - \tilde x_{2i+3})}{(1 - \tilde x_{2i} \tilde x_{2i+3}) (1 - \tilde x_{2i+1})} \cdot \frac{\tilde x_{2i+7} (1 - \tilde x_{2i+6} \tilde x_{2i+9}) (1 - \tilde x_{2i+4})}{(1 - \tilde x_{2i+4} \tilde x_{2i+7}) (1 - \tilde x_{2i+6})} \\
            &= - \frac{\tilde x_{2i} \tilde x_{2i+7} (1 + y_{i-1,0}) (1 + y_{i+3,0}) (1 - \tilde x_{2i+3}) (1 - \tilde x_{2i+4})}{(1 + y_{i,0}) (1 + y_{i+2,0}) (1 - \tilde x_{2i+1}) (1 - \tilde x_{2i+6})} .
        \end{aligned}
    \end{equation}
    It follows that 
    \begin{equation*}
        \begin{aligned}
            y_{i+1,-1} \, y_{i+1,1}
            &= \frac{\tilde x_{2i} \tilde x_{2i+3} \tilde x_{2i+4} \tilde x_{2i+7} (1 + y_{i-1,0}) (1 + y_{i+3,0})}{(1 + y_{i,0}) (1 + y_{i+2,0})} \\
            &= \frac{y_{i,0} y_{i+2,0} (1 + y_{i-1,0}) (1 + y_{i+3,0})}{(1 + y_{i,0}) (1 + y_{i+2,0})} 
            = \frac{(1 + y_{i-1,0}) (1 + y_{i+3,0})}{(1 + y_{i,0}^{-1}) (1 + y_{i+2,0}^{-1})}.
        \end{aligned}
    \end{equation*}
    This concludes the proof. 
\end{proof}

\section{The Precompactness of \texorpdfstring{$T_3$}{T(3)} Orbits} \label{sec:precompact}

In this section, we establish four algebraic invariants of $T_3$. We then use them to prove Theorem \ref{thm:3-spiral precompact}. Having Theorem \ref{thm:tic-tac-toe and 3-spirals} in hand, we may fully work with $S_n(J,I)$ and $S_n(K,J)$. Our strategy is to use the algebraic invariants to show that the corner invariants are uniformly bounded.

\subsection{The Four Invariants}

\begin{proposition} \label{prop:31 invariants}
    Given $[P] \in \cP_n$ with corner invariants $x_j = x_j(P)$, consider the following four quantities $\cF_i = \cF_i(P)$:
    \begin{equation} \label{eqn:31 energies}
        \cF_1 = \prod_{i=0}^{n-1} \frac{x_{2i}}{x_{2i} - 1} ; \ \ 
        \cF_2 = \prod_{i=0}^{n-1} \frac{x_{2i+1}}{x_{2i+1} - 1} ; \ \ 
        \cF_3 = \prod_{i=0}^{n-1} \frac{x_{2i}}{x_{2i+1}}; \ \ 
        \cF_4 = \prod_{i=0}^{n-1} \frac{1 - x_{2i}}{1 - x_{2i+1}}.
    \end{equation}
    Then, $\cF_i$ is invariant under $T_3$ for $i = 1, 2, 3, 4$.
\end{proposition}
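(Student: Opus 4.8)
The plan is to prove Proposition~\ref{prop:31 invariants} by a direct verification: substitute the coordinate formula of Proposition~\ref{prop:coordinate formula} into each $\cF_i$ and show that the resulting product over a full period collapses. Since Equation~\ref{eqn:31 corner} exhibits $T_3$ as a rational map on corner invariants, it suffices to establish the four identities as identities of rational functions on the Zariski-dense locus where all denominators and all factors $x_j-1$ are nonzero; they then hold wherever both sides are defined.

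First I would record factored forms of the auxiliary quantities. Write $D_{2i} := x_{2i-2}x_{2i-1}-(1-x_{2i+1})(1-x_{2i-4})$ and $D_{2i+1} := x_{2i+2}x_{2i+3}-(1-x_{2i+5})(1-x_{2i})$ for the two denominators occurring in Equation~\ref{eqn:31 corner}. Clearing denominators and cancelling the quadratic terms gives
\begin{equation*}
    x'_{2i}-1 = \frac{(x_{2i-4}-1)(x_{2i-2}+x_{2i+1}-1)}{D_{2i}}, \qquad
    x'_{2i+1}-1 = \frac{(x_{2i+5}-1)(x_{2i}+x_{2i+3}-1)}{D_{2i+1}}.
\end{equation*}
Next I would introduce the $n$-periodic family $b_m := x_{2m}+x_{2m+3}-1$ (periodic since $x_{j+2n}=x_j$) and observe that every trinomial $x_\bullet+x_\bullet-1$ appearing above is a shift of some $b_m$: in the even formulas the relevant factors are $b_{i-2}$ (the numerator of $x'_{2i}$) and $b_{i-1}$ (from $x'_{2i}-1$), and in the odd formulas they are $b_{i+1}$ and $b_i$. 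The last structural input is the denominator identity: substituting $i\mapsto i+2$ into $D_{2i}$ yields $D_{2i+4}=D_{2i+1}$, hence $\prod_{i=0}^{n-1}D_{2i}=\prod_{i=0}^{n-1}D_{2i+1}$.

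With these in hand, all four invariances fall out by the same mechanism. For $\cF_1$ one gets $\dfrac{x'_{2i}}{x'_{2i}-1}=\dfrac{x_{2i-2}\,b_{i-2}}{(x_{2i-4}-1)\,b_{i-1}}$, so the product over $i=0,\dots,n-1$ splits as $\big(\prod_i \tfrac{x_{2i-2}}{x_{2i-4}-1}\big)\big(\prod_i \tfrac{b_{i-2}}{b_{i-1}}\big)$; the second factor is $1$ by periodicity of $b$, and re-indexing ($\prod_i x_{2i-2}=\prod_i x_{2i}$ and $\prod_i(x_{2i-4}-1)=\prod_i(x_{2i}-1)$) turns the first into $\cF_1$. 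The computation for $\cF_2$ is identical starting from the odd line of Equation~\ref{eqn:31 corner}. For $\cF_3$ and $\cF_4$ I would instead evaluate the four products $\prod_i x'_{2i}$, $\prod_i x'_{2i+1}$, $\prod_i(1-x'_{2i})$, $\prod_i(1-x'_{2i+1})$ individually; each is a single fraction whose numerator is $\big(\prod_i x_\bullet\big)\big(\prod_i b_i\big)$ (respectively $\big(\prod_i(1-x_\bullet)\big)\big(\prod_i b_i\big)$) and whose denominator is $\prod_i D_{2i}$ or $\prod_i D_{2i+1}$. In the ratios $\prod_i x'_{2i}/\prod_i x'_{2i+1}$ and $\prod_i(1-x'_{2i})/\prod_i(1-x'_{2i+1})$, the $\prod_i b_i$ factors cancel and, by the denominator identity, so do the $\prod_i D$ factors, leaving $\cF_3$ and $\cF_4$ respectively.

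The only real obstacle is bookkeeping: the index shifts in Equation~\ref{eqn:31 corner} are offset by $\pm 4$ in several places, so the re-indexing must be tracked carefully, and one should note at the outset why verifying the rational identities suffices on the full domain of $T_3$. The single non-mechanical observation is the factorization of $x'_{2i}-1$ (and of $\prod_i x'_{2i}$) through the one periodic family $b_m$; once that pattern is spotted, the same short argument disposes of all four quantities.
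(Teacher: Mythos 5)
Your proposal is correct and follows the same route as the paper, which simply asserts that the invariance can be checked by substituting the $\cF_i$ into Equation \ref{eqn:31 corner}. You carry out that substitution explicitly, and your key factorizations ($x'_{2i}-1=(x_{2i-4}-1)(x_{2i-2}+x_{2i+1}-1)/D_{2i}$, the periodic family $b_m$, and the identity $D_{2(i+2)}=D_{2i+1}$) all check out, so your write-up supplies exactly the telescoping details the paper leaves to the reader.
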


\begin{proof}
    We first show that $\cF_3$ is invariant under $T_3$. Let $\cF_3'$ denote the invariants obtained by plugging in $x'_i$ from Equation \eqref{eqn:T3 inverse corner}. Observe that 
    \begin{equation*} \label{eqn:F3 invariance}
        \begin{aligned}
            \cF_3' 
            &= \cF_3 \cdot \prod_{i=0}^{n-1} \frac{x_{2i-4} + x_{2i-1} - 1}{x_{2i+2} + x_{2i+5} - 1} \cdot \prod_{i=0}^{n-1} \frac{x_{2i+2} x_{2i+3} - (1 - x_{2i+5})(1 - x_{2i})}{x_{2i-2} x_{2i-1} - (1 - x_{2i+1})(1 - x_{2i-4})} \\
            &= \cF_3 \cdot \frac{\prod_{i={-3}}^{n-4} (x_{2i+2} + x_{2i+5} - 1)}{\prod_{i=0}^{n-1} (x_{2i+2} + x_{2i+5} - 1)} \cdot \frac{\prod_{i=2}^{n+1} (x_{2i-2} x_{2i-1} - (1 - x_{2i+1})(1 - x_{2i-4}))}{\prod_{i=0}^{n-1} (x_{2i-2} x_{2i-1} - (1 - x_{2i+1})(1 - x_{2i-4}))} 
            = \cF_3 .
        \end{aligned}
    \end{equation*}
    This shows $\cF'_3 = \cF_3$. 
    Next, we show that $\cF_1$ and $\cF_2$ are invariant. Using conjugated corner invariants, we see that $\cF_1 = \prod_{i=0}^{n-1} \tilde x_{2i}$ and $\cF_2 = \prod_{i=0}^{n-1} \tilde x_{2i+1}$. We let $\cF_1' = \prod_{i=0}^{n-1} \tilde x'_{2i}$ be the first invariant of $T_3(P)$. Equation \eqref{eqn:T3 formula conj corner inv} gives us
    \begin{equation*}
        \cF'_1 
        = \cF_1 \cdot 
        \prod_{i=0}^{n-1} \frac{(1 - \tilde x_{2i-1} \tilde x_{2i-4})(1 - \tilde x_{2i+1})}{(1 - \tilde x_{2i+1} \tilde x_{2i-2})(1 - \tilde x_{2i-1})} = \cF_1 ,
    \end{equation*}
    where the last equality follows from cyclically permuting the numerator. 
    This shows $\cF'_1 = \cF_1$. The proof for $\cF_2$ goes through the same computation, so we omit it. 

    Finally, observe that $\cF_4 = \frac{\cF_2 \cF_3}{\cF_1}$, so by invariance of $\cF_1, \cF_2, \cF_3$, we know that $\cF_4$ must also be invariant. This concludes the proof. 
\end{proof}

\begin{remark}
    As shown in the proof of Proposition \ref{prop:31 invariants},  $\cF_1$ and $\cF_2$ correspond to the product of conjugated corner invariants. $\cF_3$ is the ratio of the two Casimirs $\frac{O_n}{E_n}$ of the $T_2$ invariant Poisson structure on $\cP_n$. For discussions on $\cF_3$ and the Casimirs, see \cite[$\S$2.3]{schwartz2024pentagramrigiditycentrallysymmetric}. Also, since $\cF_1\cF_4 = \cF_2 \cF_3$, the four $T_3$ invariants are not algebraically independent. 
\end{remark}

Below is a direct consequence of the invariance of the $\cF_i$'s. Since the $\cF_i$'s are preserved by the forward action, it must also be preserved by the backward action. 

\begin{corollary} \label{cor:31 inverse invariants}
    The four invariants $\cF_1, \cF_2, \cF_3, \cF_4$ are also invariant under $T_3^{-1}$. 
\end{corollary}

\subsection{Proof of Theorem \ref{thm:3-spiral precompact}} \label{subsec:precompactness of T3}

Recall that a subset $A$ of a topological space $X$ is precompact if the closure of $A$ is compact. To show that the $T_3$-orbit is precompact, it suffices to show that the corner invariants of the orbit are uniformly bounded away from the singularities $0, 1, \infty$. 

In this section, we let $[n] := \{1, \ldots, n\}$. Given $[P] \in \cP_n$, for all $j,m \in \ZZ$, let $x_{j,m} = x_j(T_3^m(P))$ whenever $T_3^m(P)$ exists. Let $\cF_{i,m} = \cF_i(T_3^m(P))$ for $i = 1, 2, 3, 4$. By Proposition \ref{prop:31 invariants}, $\cF_{i,m}$ is independent of $m$. All sequences are indexed by $\ZZ_{\geq 0}$ unless specified otherwise. Finally, when we say ``$\{a_m\}$ converges/diverges on a subsequence, and $\{b_m\}$ converges/diverges on the same subsequence,'' we mean that a subsequence of $\{b_m\}$ with the same choice of indices as the subsequence of $\{a_m\}$ converges/diverges. 

\begin{lemma} \label{lem:compact J in (J, I)}
    Given $[P] \in S_n(J, I)$, there exist $a, b \in J$ such that $x_{2i,m} \in [a, b]$ for all $i \in [n]$ and $m \in \ZZ_{\geq0}$. 
\end{lemma}

\begin{proof}
    We first claim that for each $i$, the sequence $\{x_{2i,m}\}$ is bounded above uniformly by some $b_i \in J$. If not, then $x_{2i,m} \rightarrow 1$ on a subsequence, which implies $1 - x_{2i,m} \rightarrow 0$ on the same subsequence. Since $[T_3^m(P)] \in S_n(J,I)$ for all $m \in \ZZ_{\geq0}$, we must have $1 - x_{2j,m} \in (0,1)$ and $(1 - x_{2j+1,m})^{-1} \in (0,1)$ for all $j\in [n]$. This implies $\cF_{4,m} \rightarrow 0$ on the same subsequence, but that contradicts invariance of $\cF_{4,m}$. Therefore, $\{x_{2i,m}\}$ is bounded above by $b_i = \sup_m \{x_{2i,m}\} \in J$. Taking $b = \max_{i \in [n]} b_i$ satisfies the condition in the lemma.

    Next, we show $\{x_{2i,m}\}$ is bounded below uniformly by some $a_i > 0$. If not, then $x_{2i,m} \rightarrow 0$ on a subsequence, so $x_{2i,m} \cdot (x_{2i,m} - 1)^{-1} \rightarrow 0$ on the same subsequence. From the argument above, $x_{2j,m} \leq b$ for all $m \in \ZZ_{\geq0}$ and $j\in [n]$, which gives us $|x_{2j,m} \cdot (x_{2j,m} - 1)^{-1}| \leq |\frac{b}{b-1}|$, so the sequences are uniformly bounded for all $j \neq i$. This together with $|x_{2i,m} \cdot (x_{2i,m} - 1)^{-1}| \rightarrow 0$ on a subsequence implies $|\cF_{1,m}| \rightarrow 0$ on the same subsequence, but that contradicts invariance of $\cF_{1,m}$. Therefore, $\{x_{2i,m}\}$ is bounded below by $a_i = \inf_m \{x_{2i,m}\} \in J$.
    Taking $a = \min_{i \in [n]} a_i$ completes the proof. 
\end{proof}

\begin{lemma} \label{lem:compact I in (J, I)}
    With the same notation as in Lemma \ref{lem:compact J in (K, J)}, there exist $c, d \in I$ such that $x_{2i+1,m} \in [c, d]$ for all $i \in [n]$ and $m \in \ZZ_{\geq0}$. 
\end{lemma}

\begin{proof}
    We first claim that for each $i$,the sequence $\{x_{2i+1,m}\}$ is bounded above uniformly by some $d_i \in I$. If not, then, $x_{2i+1,m} \cdot (x_{2i+1,m} - 1)^{-1} \rightarrow 0$ on a subsequence. Since $x_{2j+1,m} \cdot (x_{2j+1,m} - 1)^{-1} \in (0,1)$ for all $j \in [n]$, we must have $\cF_{2,m} \rightarrow 0$ on the same subsequence, but that contradicts invariance of $\cF_{2,m}$. 

    Next, we show that $\{x_{2i+1,m}\}$ is bounded below uniformly by some $c_i \in I$. If not, then a subsequence of $\{x_{2i+1,m}\}$ must diverge, so the same subsequence of $\{1 - x_{2i+1,m}\}$ also diverges. Lemma \ref{lem:compact J in (J, I)} and $x_{2i+1,m} \leq d_i < 0$ together implies $\cF_{4,m}$ diverges on the same subsequence, but that contradicts invariance of $\cF_{4,m}$. 
    Finally, taking $c = \min_{i \in [n]} c_i$ and $d = \max_{i \in [n]} d_i$ completes the proof. 
\end{proof}

The proofs of the following two lemmas are analogous to Lemma \ref{lem:compact J in (J, I)} and \ref{lem:compact I in (J, I)}. We will omit the details and point out which invariants to use in each step.

\begin{lemma} \label{lem:compact J in (K, J)}
    Given $[P] \in S_n(K, J)$, there exist $a, b \in J$ such that $x_{2i+1,m} \in [a, b]$ for all $i \in [n]$ and $m \in \ZZ_{\geq0}$. 
\end{lemma}

\begin{proof}
    For each $i$, the sequence $\{x_{2i+1,m}\}$ is bounded below by some $a_i \in J$, for otherwise $\cF_{3,m}$ diverges on a subsequence. Next, since $\{ |\cF_{3,m}| \}$ is bounded below by $\prod_{j=0}^{n-1} a_j > 0$,  $\{x_{2i+1,m}\}$ is bounded above by some $b_i \in J$. Taking $a = \min_{i \in [n]} a_i$ and $b = \max_{i \in [n]} b_i$ completes the proof. 
\end{proof}

\begin{lemma} \label{lem:compact K in (K, J)}
    With the same notation as in Lemma \ref{lem:compact J in (K, J)}, there exist $c, d \in K$ such that $x_{2i,m} \in [c, d]$ for all $i \in [n]$ and $m \in \ZZ_{\geq0}$. 
\end{lemma}

\begin{proof}
    For each $i$, the sequence $\{x_{2i,m}\}$ must be bounded below by some $c_i \in K$, for otherwise $\cF_{1,m} \rightarrow \infty$ on a subsequence. It's also bounded above by some $d_i$. To see this, Lemma \ref{lem:compact J in (K, J)} implies all corner invariants are bounded away from 0, so if $\{x_{2i,m}\}$ is not bounded above, then $\cF_{3,m}$ diverges on a subsequence. Taking $c = \min_i c_i$ and $d = \max_i d_i$ completes the proof. 
\end{proof}

\begin{proof}[Proof of Theorem \ref{thm:3-spiral precompact}]
    We will show that the forward $T_3$ orbit of $[P] \in \cS_{3,n}^\alpha = S_n(J,I)$ has uniformly bounded corner invariants. By Proposition \ref{prop:S3 alpha equals in S(J,I)}, $[P] \in S_n(J,I)$. 
    Let $[a,b] \subset J$, $[c, d] \subset I$ be compact intervals derived from Lemma \ref{lem:compact J in (J, I)} and \ref{lem:compact I in (J, I)}. Then, the sequence $\{(x_{0,m}, \ldots, x_{2n-1,m})\}$ is contained in $\prod_{i=0}^{n-1} [a,b] \times [c,d]$, so it is uniformly bounded. To show precompactness of the backward $T_3$ orbit of $\cS_{3,n}^\alpha$, one can adapt the proofs of Lemma \ref{lem:compact J in (J, I)} and \ref{lem:compact I in (J, I)} with very few changes. We omit the details. The case $\cS_{3,n}^\beta$ follows from Lemma \ref{lem:compact J in (K, J)} and \ref{lem:compact K in (K, J)} by essentially the same argument, which we again omit.
\end{proof}
\section{Type-\texorpdfstring{$\beta$}{beta} 2-Spirals and Precompact \texorpdfstring{$T_2$}{T2} Orbits} \label{sec:two spiral}

\subsection{The Corner Invariants of Type-\texorpdfstring{$\beta$}{beta} 2-Spirals} \label{subsec:type beta 2 spirals corner inv}

We finish this paper by discussing the type-$\beta$ 2-spirals. Proposition \ref{prop:k spirals are not closed} implies $\cS_{2,n}^{\beta}$ is disjoint from the moduli space of closed convex polygons, so $\cS_{2,n}^{\beta}$ is a new invariant geometric construction under the pentagram map by Theorem \ref{thm:spiral polygon invariance}. In this section, we analyze the corner invariants of $\cS_{2,n}^\beta$ and show that just like the type-$\alpha$ and type-$\beta$ 3-spirals, it is cut out by linear boundaries.

\begin{proposition} \label{prop:type beta 2 spiral corner inv}
    For all $n \geq 2$, given any $[P] \in \cS_{2,n}^\beta$ with corner invariants $x_j = x_j(P)$, we have $x_{2i} > 0$ and $x_{2i+1} < 0$ for all $i \in [n]$. 
\end{proposition}

\begin{proof}
    Let $P$ be an $(i-2)$-representative of $[P]$. Normalize by $\Aff_2^+(\RR)$ so that $P_{i-1} = (-1,0)$, $P_i = (0,0)$, $P_{i+1} = (0,1)$ on the affine patch, which is possible because $(P_{i-1}, P_{i}, P_{i+1})$ is positive. Let $s_{a,b} \in \RR \cup \{\infty\}$ denote the slope of $P_{i+a} P_{i+b}$.
    Positivity of $(P_{i-2}, P_{i-1}, P_i)$ and $P_{i+1} \in \Int(P_{i-2}, P_{i-1}, P_i)$ implies $s_{-1,-2} > 1$ and $s_{1,-2} > 1$. Similarly, since $P_{i+2} \in \Int(P_{i-1}, P_{i}, P_{i+1})$, we have $s_{-1,2} \in (0,1)$ and $s_{1,2} > 1$. It follows that 
    \begin{equation} \label{eqn:type beta 2 spiral corner}
        \begin{aligned}
            x_{2i} 
            &= \frac{(s_{1,-2} - s_{1,-1}) (s_{1,0} - s_{1,2})}{(s_{1,-2} - s_{1,0}) (s_{1,-1} - s_{1,2})} 
            = - \frac{s_{1,-2} - 1}{1 - s_{1,2}} > 0 ; \\
            x_{2i+1}
            &= \frac{(s_{-1,2} - s_{-1,1}) (s_{-1,0} - s_{-1,-2})}{(s_{-1,2} - s_{-1,0}) (s_{-1,1} - s_{-1,-2})} 
            = \frac{- s_{-1,-2} (s_{-1,2} - 1)}{s_{-1,2} (1 - s_{-1,-2})}
            < 0.
        \end{aligned}
    \end{equation}
    This concludes the proof. 
\end{proof}

\begin{figure}[ht]
    \centering
    \footnotesize
    
    \def\svgwidth{0.35\columnwidth}
\begingroup%
  \makeatletter%
  \providecommand\color[2][]{%
    \errmessage{(Inkscape) Color is used for the text in Inkscape, but the package 'color.sty' is not loaded}%
    \renewcommand\color[2][]{}%
  }%
  \providecommand\transparent[1]{%
    \errmessage{(Inkscape) Transparency is used (non-zero) for the text in Inkscape, but the package 'transparent.sty' is not loaded}%
    \renewcommand\transparent[1]{}%
  }%
  \providecommand\rotatebox[2]{#2}%
  \newcommand*\fsize{\dimexpr\f@size pt\relax}%
  \newcommand*\lineheight[1]{\fontsize{\fsize}{#1\fsize}\selectfont}%
  \ifx\svgwidth\undefined%
    \setlength{\unitlength}{186.10863633bp}%
    \ifx\svgscale\undefined%
      \relax%
    \else%
      \setlength{\unitlength}{\unitlength * \real{\svgscale}}%
    \fi%
  \else%
    \setlength{\unitlength}{\svgwidth}%
  \fi%
  \global\let\svgwidth\undefined%
  \global\let\svgscale\undefined%
  \makeatother%
  \begin{picture}(1,0.98415635)%
    \lineheight{1}%
    \setlength\tabcolsep{0pt}%
    \put(0,0){\includegraphics[width=\unitlength,page=1]{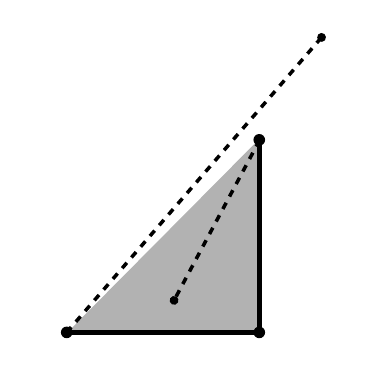}}%
    \put(0.79082774,0.92775804){\color[rgb]{0,0,0}\makebox(0,0)[lt]{\lineheight{1.25}\smash{\begin{tabular}[t]{l}$P_{i-2}$\end{tabular}}}}%
    \put(0.04360665,0.04178767){\color[rgb]{0,0,0}\makebox(0,0)[lt]{\lineheight{1.25}\smash{\begin{tabular}[t]{l}$P_{i-1}$\end{tabular}}}}%
    \put(0.72745879,0.0440676){\color[rgb]{0,0,0}\makebox(0,0)[lt]{\lineheight{1.25}\smash{\begin{tabular}[t]{l}$P_i$\end{tabular}}}}%
    \put(0.72183984,0.60733854){\color[rgb]{0,0,0}\makebox(0,0)[lt]{\lineheight{1.25}\smash{\begin{tabular}[t]{l}$P_{i+1}$\end{tabular}}}}%
    \put(0.47707581,0.18736103){\color[rgb]{0,0,0}\makebox(0,0)[lt]{\lineheight{1.25}\smash{\begin{tabular}[t]{l}$P_{i+2}$\end{tabular}}}}%
  \end{picture}%
\endgroup%

    \caption{Configuration of Proposition \ref{prop:type beta 2 spiral corner inv} and \ref{prop:type beta 2 spiral corner inv 2}.}
    \label{fig:2 spirals}
\end{figure}

\begin{proposition} \label{prop:type beta 2 spiral corner inv 2}
    For all $n \geq 2$, if $[P] \in \cP_n$ has corner invariants $x_j = x_j(P)$ such that $x_{2i} > 0$ and $x_{2i+1} < 0$ for all $i \in [n]$, then $[P] \in \cS_{2,n}^\beta$.
\end{proposition}

\begin{proof}
    Fix $N \in \ZZ$.
    Let $P$ be a representative of $[P]$ such that $P_{N-2} = (\frac{1}{3}, \frac{3}{2})$, $P_{N-1} = (-1,0)$, $P_N = (0,0)$, and $P_{N+1} = (0,1)$. 
    We say $P$ satisfies condition $(**)_i$ if $(P_{i-1}, P_{i}, P_{i+1})$ is positive and $P_{i+2} \in \Int(P_{i-1}, P_{i}, P_{i+1})$. Then, $P$ is a type-$\beta$ $N$-representative of 2-spirals iff $P$ satisfies $(**)_i$ for all $i > N$. Notice that $P$ satisfies $(**)_N$, so by an induction argument, it suffices to show that for $i \geq N$, if $P$ satisfies $(**)_i$, then $P$ satisfies $(**)_{i+1}$. 
    
    If $P$ satisfies $(**)_i$, then $(P_{i-1}, P_i, P_{i+1})$ is positive. Normalize by $\Aff_2^+(\RR)$ so that $P_{i-1} = (-1,0)$, $P_i = (0,0)$, and $P_{i} = (0,1)$. We will use the same notation $l_{a,b}$ and $s_{a,b}$ as Proposition \ref{prop:type beta 2 spiral corner inv}. Since $P_{i+1} \in \Int(P_{i-2}, P_{i-1}, P_{i})$, we have $s_{-1,-2} > 1$ and $s_{1,-2} > 1$. Then, since $x_{2i} > 0$ and $x_{2i+1} < 0$, Equation \eqref{eqn:type beta 2 spiral corner} gives us
    \begin{equation*}
        \frac{s_{1,-2} - 1}{s_{1,2} - 1} > 0 \ \text{ and } \ \frac{s_{-1,-2} (s_{-1,2} - 1)}{s_{-1,2}(s_{-1,-2} - 1)} < 0.
    \end{equation*}
    It follows that $s_{1,2} > 1$ and $1 - \frac{1}{s_{-1,2}} < 0$. The latter inequality implies $\frac{1}{s_{-1,2}} > 1$, so in particular $s_{-1,2} > 0$ and hence $s_{-1,2} \in (0,1)$. The two conditions $s_{1,2} > 1$ and $s_{-1,2} \in (0,1)$ implies $P_{i+2} \in \Int(P_{i-1}, P_{i}, P_{i+1})$ and $(P_{i}, P_{i+1}, P_{i+2})$ positive, so $P$ satisfies $(**)_{i+1}$ as desired. We conclude that $[P] \in \cS_{2,n}^\beta$.
\end{proof}

\subsection{The Precompactness of \texorpdfstring{$T_2$}{T(2)} Orbits} \label{subsec:precompactness of T2 orbits}

We adapt the argument for Theorem \ref{thm:3-spiral precompact} to give a quick proof of Theorem \ref{thm:2-spiral precompact} using the Casimir functions of the $T_2$-invariant Poisson structure on $\cP_n$ that were developed in \cite[Theorem 1.2]{schwartz2008discretemonodromypentagramsmethod}. One can find the proof of the following lemma in \cite[$\S$2.2]{schwartz2008discretemonodromypentagramsmethod}.

\begin{lemma} \label{lem:T2 invariants}
    For the map $T_2$ acting on a twisted $n$-gon $P$ with corner invariants $x_j = x_j(P)$, one has the following four invariant quantities.
    \begin{equation*}
        \begin{aligned}
            O_1(P) &= \sum_{i=0}^{n-1} (- x_{2i+1} + x_{2i-1} x_{2i} x_{2i+1}) ; &
            O_n(P) &= \prod_{i=0}^{n-1} x_{2i+1} ; \\
            E_1(P) &= \sum_{i=0}^{n-1} (- x_{2i} + x_{2i-2} x_{2i-1} x_{2i}) ; &
            E_n(P) &= \prod_{i=1}^n x_{2i} . 
        \end{aligned}
    \end{equation*}
\end{lemma}

We continue to use the notation from $\S$\ref{subsec:precompactness of T3}. In addition, we write $O_{1,m} = O_1(T_2^m(P))$. We define $O_{n,m}$, $E_{1,m}$, and $E_{n,m}$ analogously. By Lemma \ref{lem:T2 invariants}, the values of these four quantities are independent of the choice of $m$.

\begin{lemma} \label{lem:bounding even corner inv for type beta 2 spirals}
    For all $n \geq 2$, given $[P] \in \cS_{2,n}^\beta$, there exists $a,b > 0$ such that $x_{2i,m} \in [a,b]$ for all $i \in [n]$ and $m \in \ZZ_{\geq0}$.
\end{lemma}

\begin{proof}
    Fix $i \in [n]$. We first show that $x_{2i,m}$ is uniformly bounded above by some $b > 0$. Since $T_2^m(P) \in \cS_{2,n}^\beta$ for all $m \in \ZZ_{\geq0}$, we must have $E_{1,m} < -x_{2i,m} < 0$. Then, if $x_{2i,m} \rightarrow \infty$ on a subsequence, $E_{1,m}$ also diverges on the same subsequence, but that contradicts invariance of $E_{1,m}$. This implies $x_{2i,m} < b_i$ for some $b_i > 0$. Taking $b = \max_{i \in [n]} b_i$ satisfies the condition in the lemma. 

    Next, we show that $x_{2i,m}$ is uniformly bounded below by some $a > 0$. We first notice that $E_{n,m} < b_i^{n}$. This implies if $x_{2i,m} \rightarrow 0$ on a subsequence, then $E_{n,m} \rightarrow 0$ on the same subsequence, but that contradicts invariance of $E_{n,m}$. Therefore, $x_{2i,m} > a_i$ for some $a_i > 0$. Taking $a = \min_{i \in [n]} a_i$ completes the proof. 
\end{proof}

\begin{lemma} \label{lem:bounding odd corner inv for type beta 2 spirals}
    For all $n \geq 2$, given $[P] \in \cS_{2,n}^\beta$, there exists $c,d < 0$ such that $x_{2i+1,m} \in [c,d]$ for all $i \in [n]$ and $m \in \ZZ_{\geq0}$.
\end{lemma}

\begin{proof}
    The argument is analogous to the proof of Lemma \ref{lem:bounding even corner inv for type beta 2 spirals}. Fix $i \in [n]$. To find $c_i$ that bounds $\{x_{2i+1,m}\}$ uniformly from below, we use the fact that $O_{1,m} > -x_{2i+1} > 0$. We then set $c = \min_{i \in [n]} c_i$. To find $d_i$ that bounds $\{x_{2i+1,m}\}$ uniformly from above, we use the fact that $|O_n(P)| < |c^n|$. We then set $d = \max_{i \in [n]} d_i$ to complete the proof. 
\end{proof}

Lemma \ref{lem:bounding even corner inv for type beta 2 spirals} and \ref{lem:bounding odd corner inv for type beta 2 spirals} together implies that the forward $T_2$-orbit of any $[P] \in \cS_{2,n}^\beta$ is precompact in $\cP_n$. One can use the same argument to show that the backward $T_2$-orbit is also precompact. We have thus completed the proof of Theorem \ref{thm:2-spiral precompact}.

\section{Appendix} \label{sec:appendix}

\subsection{Conjectures for Invariants}

Given $[P] \in \cP_{k,n}$, we may consider the following quantity:
\begin{equation} \label{eqn:y-variables for Tk}
    y_{i}^{(k)}(P) = -\chi(P_i, P_{i} P_{i+k} \cap P_{i-1} P_{i+k-1}, P_{i} P_{i+k} \cap P_{i+1} P_{i+k+1}, P_{i+k}) .
\end{equation}
When $T_k^j(P)$ is well-defined, we write $y_{i,j}^{(k)} = y_i^{(k)}(T_k^j(P))$, or simply $y_{i,j}$ if the value of $k$ is clear from the context. Let $Y_{j}^{(k)}$ (or simply $Y_j$) denote the product $\prod_{i=0}^{n-1} y_{i,j}^{(k)}$. 

\begin{proposition} \label{prop:alg invariants for Tk}
    For all $k,n \geq 2$, given $[P] \in \cS_{k,n}^\alpha$, there exists $C \in \RR$, $C \neq 0$, such that
    \begin{equation} \label{eqn:Tk casimirs}
        Y_{j+1}^{(k)} \left( Y_j^{(k)} \right)^{-1} = C 
    \end{equation}
    for all $j \in \ZZ$. The same holds for $[P] \in \cS_{k,n}^\beta$.
\end{proposition}

\begin{proof}
    The grid $(\hat P_{i,j})_{(i,j) \in \ZZ^2}$ where $\hat P_{i,j}$ is the $i$-th vertex of $T_k^j(P)$ is a $Y$-mesh of type $S = \{(0,0), (k,0), (-1,1), (0,1)\}$ with $y_r = y_r(\hat P)$ for $r \in \ZZ^2$. The proof is essentially the same as the one for Proposition \ref{prop:Pij is Y-mesh}, so we will omit it. Then, from \cite[Theorem 1.6]{GP2016ymeshes}, we have 
    \begin{equation} \label{eqn:y-variable transform for Tk}
        y_{i+k,j} y_{i-1,j+2} 
        = \frac{(1 + y_{i-1,j+1})(1 + y_{i+k,j+1})}{(1 + y_{i,j+1}^{-1})(1 + y_{i+k-1,j+1}^{-1})} .
    \end{equation}
    It follows that 
    \begin{equation}
        \begin{aligned}
            Y_j Y_{j+2}
            &= \prod_{i=0}^{n-1} (y_{i+k,j} y_{i-1,j+2}) 
            = \prod_{i=0}^{n-1} \frac{(1 + y_{i-1,j+1})(1 + y_{i+k,j+1})}{(1 + y_{i,j+1}^{-1})(1 + y_{i+k-1,j+1}^{-1})} \\
            &= \left( \prod_{i=0}^{n-1} (y_{i,j+1} \, y_{i+k-1,j+1})  \right) \, 
            \left( \prod_{i=0}^{n-1} \frac{(1 + y_{i-1,j+1})(1 + y_{i+k,j+1})}{(1 + y_{i,j+1})(1 + y_{i+k-1,j+1})} \right) 
            = Y_{j+1}^2.
        \end{aligned}
    \end{equation}
    This implies $Y_{j+2} / Y_{j+1} = Y_{j+1} / Y_j$ for all $j \in \ZZ$. Taking $C = Y_1 / Y_0$ completes the proof. 
\end{proof}

\begin{remark}
    Combining the results of \cite{GP2016ymeshes} and \cite{gekhtman2012higherpentagrammapsweighted}, we see that
    Proposition \ref{prop:alg invariants for Tk} is equivalent to \cite[Theorem 2.1]{gekhtman2012higherpentagrammapsweighted}. Specifically, the quantity in Equation \eqref{eqn:Tk casimirs} is shown to be a Casimir function with respect to a Poisson structure that is invariant under the $y$-variable transformation of a quiver $Q_k$, which we will define below.
    
    Consider the infinite directed graph $Q_k$ whose vertices are indexed by $\ZZ \times \{0,1\}$, with directed edges $(i,0) \rightarrow (i-1,1)$, $(i,0) \rightarrow (i-k,1)$, $(i,1) \rightarrow (i,0)$, and $(i-k-1,1) \rightarrow (i,0)$ for all $i \in \ZZ$. See Figure \ref{fig:quiver} for a visual representation of this quiver. We refer the readers to \cite[$\S$9]{GP2016ymeshes} for the construction of this quiver and the proof that the $y$-variable transformations satisfy \eqref{eqn:y-variable transform for Tk}.

    \begin{figure}[ht]
        \centering
        \footnotesize
        
    \def\svgwidth{0.6\columnwidth}
\begingroup%
  \makeatletter%
  \providecommand\color[2][]{%
    \errmessage{(Inkscape) Color is used for the text in Inkscape, but the package 'color.sty' is not loaded}%
    \renewcommand\color[2][]{}%
  }%
  \providecommand\transparent[1]{%
    \errmessage{(Inkscape) Transparency is used (non-zero) for the text in Inkscape, but the package 'transparent.sty' is not loaded}%
    \renewcommand\transparent[1]{}%
  }%
  \providecommand\rotatebox[2]{#2}%
  \newcommand*\fsize{\dimexpr\f@size pt\relax}%
  \newcommand*\lineheight[1]{\fontsize{\fsize}{#1\fsize}\selectfont}%
  \ifx\svgwidth\undefined%
    \setlength{\unitlength}{229.26890216bp}%
    \ifx\svgscale\undefined%
      \relax%
    \else%
      \setlength{\unitlength}{\unitlength * \real{\svgscale}}%
    \fi%
  \else%
    \setlength{\unitlength}{\svgwidth}%
  \fi%
  \global\let\svgwidth\undefined%
  \global\let\svgscale\undefined%
  \makeatother%
  \begin{picture}(1,0.48294594)%
    \lineheight{1}%
    \setlength\tabcolsep{0pt}%
    \put(0,0){\includegraphics[width=\unitlength,page=1]{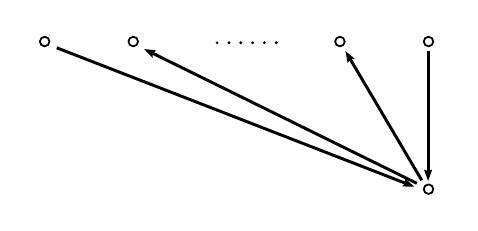}}%
    \put(-0.00108829,0.42233226){\color[rgb]{0,0,0}\makebox(0,0)[lt]{\lineheight{1.25}\smash{\begin{tabular}[t]{l}$(i-k-1,1)$\end{tabular}}}}%
    \put(0.22146096,0.42233226){\color[rgb]{0,0,0}\makebox(0,0)[lt]{\lineheight{1.25}\smash{\begin{tabular}[t]{l}$(i-k,1)$\end{tabular}}}}%
    \put(0.64805196,0.42233226){\color[rgb]{0,0,0}\makebox(0,0)[lt]{\lineheight{1.25}\smash{\begin{tabular}[t]{l}$(i-1,1)$\end{tabular}}}}%
    \put(0.85823737,0.42233226){\color[rgb]{0,0,0}\makebox(0,0)[lt]{\lineheight{1.25}\smash{\begin{tabular}[t]{l}$(i,1)$\end{tabular}}}}%
    \put(0.86095592,0.03584679){\color[rgb]{0,0,0}\makebox(0,0)[lt]{\lineheight{1.25}\smash{\begin{tabular}[t]{l}$(i,0)$\\\end{tabular}}}}%
  \end{picture}%
\endgroup%

        \caption{The quiver $Q_k$. Only edges into and out of the vertex $(i,0)$ are shown.}
        \label{fig:quiver}
    \end{figure}
    
    For all $n \geq 2$, the $y$-variables corresponding to $[P] \in \cP_{k,n}$ are periodic modulo $n$. We may then identify vertices of $Q_k$ via $(i,j) \sim (i+n,j)$, and similarly identify the corresponding edges. The resulting directed graph $Q_{k,n}$ is isomorphic to the quiver $\cQ_{k,n}$ from \cite{gekhtman2012higherpentagrammapsweighted} by applying a translation to the first entry of the vertices $(i,1)$. Moreover, the $y$-variables $(y_{i,0})_{i\in[n]}$ of the quiver in \cite{GP2016ymeshes} transforms in the same way as the $p$-variables $(p_i)_{i\in[n]}$ of $\cQ_{k,n}$ under the map $\overline{T_k}$ (see \cite[$\S$2]{gekhtman2012higherpentagrammapsweighted}), and the $q$-variables $(q_i)_{i\in[n]}$ of $\cQ_{k,n}$ correspond to the multiplicative inverse of $y_{i,-1}$. As a result, $Y_{0}^{(k)} / Y_{-1}^{(k)} = \prod_{i=1}^n p_iq_i$, which by \cite[Theorem 2.1]{gekhtman2012higherpentagrammapsweighted} is invariant under $\overline{T_k}$ and forms a Casimir function with respect to a Poisson structure that is invariant under $\overline{T_k}$. 

    Both \cite{GP2016ymeshes} and \cite{gekhtman2012higherpentagrammapsweighted} demonstrate that the quiver $Q_{k,n}$ is a bipartite graph that can be embedded into a torus. For further details, see \cite[$\S$9]{GP2016ymeshes} and \cite[$\S$3]{gekhtman2012higherpentagrammapsweighted}. This connection links $Q_{k,n}$ to the \textit{Goncharov-Kenyon Dimer Integrable Systems} in \cite{GK13dimerIntegrableSystems}, where a more general definition of Casimir functions is provided.
\end{remark}

\begin{conjecture} \label{conj:alg invariants for Tk}
    The constant $C$ in Proposition \ref{prop:alg invariants for Tk} equals 1 for all $k \geq 2$.
\end{conjecture}

We prove Conjecture \ref{conj:alg invariants for Tk} for $k = 2$ and $k = 3$. Let $x_j = x_j(P)$ be the corner invariants of $[P]$. The case $k = 2$ follows from $y_{i}^{(2)}(P) = -x_{2i+1} x_{2i+2}$ (see Equation \eqref{eqn:y-variable of T2}), so 
\begin{equation*}
    \prod_{i=1}^n y_{i}^{(2)}(P) 
    = (-1)^n \prod_{i=1}^n x_{2i+1} x_{2i+2} 
    = (-1)^n \, O_n(P) \, E_n(P) ,
\end{equation*}
which is $T_2$-invariant by Lemma \ref{lem:T2 invariants}. 

For the case $k = 3$, Equation \eqref{eqn:y-variable and corner inv} implies 
\begin{equation*}
    \prod_{i=1}^n y_{i}^{(3)}(P)
    = (-1)^n \prod_{i=1}^n \frac{x_{2i} x_{2i+3}}{(x_{2i} - 1)(x_{2i+3} - 1)}
    = (-1)^n \, \cF_1(P) \, \cF_2(P) ,
\end{equation*}
which is $T_3$-invariant by Proposition \ref{prop:31 invariants}.

\newpage

\printbibliography

\end{document}